\DeclarePairedDelimiterX{\cosineexpansion}[1]{\llbracket}{\rrbracket}{\ifblank{#1}{\,\cdot\,}{#1}}
\pgfplotsset{compat=newest}
\newlength\figureheight 
\newlength\figurewidth
\begin{document}
\title{Periodic Hölder waves in a class of\\ negative-order dispersive equations}
\author{Fredrik Hildrum\footnote{Corresponding author.} \,\orcid{0000-0002-9905-1670}\email{arXiv@fredrik.hildrum.net} \and Jun Xue\,\orcid{0000-0002-1888-0208}\email{jun.xue@ntnu.no}}
\address{Department of Mathematical Sciences,\\ NTNU -- Norwegian University of Science and Technology,\\ 7491 Trondheim, Norway}
\titlerunning{Hölder waves in negative-order dispersive equations}
\authorrunning{F.~Hildrum and J.~Xue}
\date{\today}
\maketitle

\titlegraphic{%
\vspace*{-0.8em}%
\centering%
\setlength\figureheight{.1\textwidth}%
\setlength\figurewidth{.4\textwidth}%
\input{tikz/title-waves-abs.tikz}%
\hspace{.16\textwidth}%
\input{tikz/title-waves-sgn.tikz}%
}

\begin{abstract}
We prove the existence of highest, cusped, periodic travelling-wave solutions with exact and optimal \({ \alpha }\)-Hölder continuity in a class of fractional negative-order dispersive equations of the form
\begin{equation*}
u_t + (\abs{ \textnormal{D} }^{- \alpha} u + n(u) )_x = 0
\end{equation*}
for every \({ \alpha \in (0, 1) }\) with homogeneous Fourier multiplier~\({\abs{ \textnormal{D} }^{ - \alpha} }\). We tackle nonlinearities \({ n(u) }\) of the type \({ \abs{ u }^p }\) or \({ u \abs{ u }^{p - 1} }\) for all real~\({ p > 1 }\), and show that when \({ n }\) is odd, the waves also feature antisymmetry and thus contain inverted cusps. Tools involve detailed pointwise estimates in tandem with analytic global bifurcation, where we resolve the issue with nonsmooth~\({ n }\) by means of regularisation. We believe that both the construction of highest antisymmetric waves and the regularisation of nonsmooth terms to an analytic bifurcation setting are new in this context, with direct applicability also to generalised versions of the Whitham, the Burgers--Poisson, the Burgers--Hilbert, the Degasperis--Procesi, the reduced Ostrovsky, and the bidirectional Whitham equations.
\end{abstract}

\keywords{negative-order dispersive equations; homogeneous dispersion; cusped travelling waves; Hölder regularity; global bifurcation; nonsmooth nonlinearities}
\subjclass[2010]{35B10; 35B32; 35B65; 35S30; 45M15; 49J52}

\section{Introduction} \label{sec:intro}

\subsection{Main result}

In this paper, we shall be concerned with singular periodic travelling-wave solutions to a class of nonlinear and dispersive evolution equations of the form
\begin{equation} \label{eq:evolution}
u_t + (\abs{ \textnormal{D} }^{- \alpha}u + n(u))_x = 0.
\end{equation}
This family may be viewed as a kind of generalised fractional Korteweg--de~Vries (KdV) equations of negative-order, where we refer to \autocite{BenBonMah1972q} for a classical description of nonlocal variants of the KdV equation in the mathematical modelling of long-wave phenomena. The dispersive properties occur in the homogeneous negative-order (spatial) Fourier multiplier~\({ \abs{ \textnormal{D} }^{- \alpha} }\) for \({ \alpha \in (0, 1) }\) defined by
\begin{equation*}
\mathscr{F}( \abs{ \textnormal{D} }^{- \alpha} u)(\xi) \coloneqq \abs{ \xi }^{- \alpha} \, \widehat{  u  }(\xi),
\end{equation*}
with \({ \textnormal{D} \coloneqq - \textnormal{i} \partial_x }\), whereas the nonlinear effects originate from either of the generally nonsmooth nonlinearities
\begin{subequations} \label{eq:n}
    \begin{empheq}[left={n(x) \coloneqq \empheqlbrace\,},right={\,\empheqrbrace\qquad\text{with \({ p > 1 }\) real.}}]{align}
      & \abs{ x }^{p} \text{ or} \label{eq:n-abs} \tag{\({ \theparentequation_{ \textnormal{abs}} }\)}\\
      & x \abs{ x }^{p - 1} \label{eq:n-sgn} \tag{\({ \theparentequation_{ \textnormal{sgn}} }\)}
    \end{empheq}
\end{subequations}
Our main contributions are to
\begin{enumerate}[ref=\roman*),itemsep=.5\parskip]
\item 
prove the existence of highest, exactly \({ \alpha }\)-Hölder continuous periodic steady solutions of the negative-order dispersive family~\eqref{eq:evolution} for all~\({ \alpha \in (0, 1) }\) on the torus~\({ \T \coloneqq \R / 2 \uppi \Z }\), and
\item
initiate a study of nonsmooth nonlinearities and antisymmetric features in the large-amplitude theory for negative-order dispersive evolution equations.
\end{enumerate}
Precisely, we obtain the following result, with corresponding numerical illustrations in \cref{fig:waves}.

\begin{theorem}[Existence] \label{thm:existence}
Let \({ \alpha \in (0, 1) }\) and \({ p > 1 }\) be real. Then there exists a nontrivial
periodic travelling-wave solution~\({ \varphi }\) of~\eqref{eq:evolution} with positive speed \({ c < \frac{ p }{ p - 1 } \norm{ \mathscr{F}^{-1}( \abs{  }^{ - \alpha}) }_{ \textnormal{L}^1(\T) }  }\). The solution is even (about~\({ 2 \uppi \Z }\)), has zero mean, and satisfies
\begin{equation*}
\max \varphi = \varphi(0) = \mu \qquad \text{and} \qquad \varphi \in \textnormal{C}^\alpha(\T),
\end{equation*}
where \({ \mu \coloneqq (c/p)^{1/(p - 1)} }\). It is also smooth (except possibly at the point where it vanishes) and strictly increasing on~\({ (-\uppi, 0) }\) and exactly \({ \alpha }\)-Hölder continuous at \({ x \in 2 \uppi \Z }\), that is,
\begin{equation*}
\mu - \varphi(x) \eqsim \abs{ x - 2\uppi \ell }^{ \alpha }
\end{equation*}
uniformly around~\({ 2\uppi \ell }\) for \({ \ell \in \Z }\).

One has that \({ \varphi }\) is smooth around~\({ - \uppi }\) in case~\eqref{eq:n-abs}, while \({ \varphi }\) is antisymmetric about~\({ - \frac{ \uppi }{ 2 } }\) in case~\eqref{eq:n-sgn} and therefore also exactly \({ \alpha }\)-Hölder continuous at~\({ \uppi \Z }\) with \({ \min \varphi = \varphi(- \uppi) = - \mu  }\).
\end{theorem}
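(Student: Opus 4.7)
The plan is to realise the highest wave as the terminal point of a global bifurcation curve for the travelling-wave equation
\[
\abs{ \textnormal{D} }^{-\alpha} \varphi - c \varphi + n(\varphi) = 0 \qquad \text{on } \T,
\]
obtained by integrating \eqref{eq:evolution} once and selecting the zero-mean normalisation. Because the nonlinearities in \eqref{eq:n-abs}--\eqref{eq:n-sgn} are in general only Hölder continuously differentiable at the origin, the Buffoni--Toland analytic global bifurcation machinery does not apply directly. My first step is therefore to \emph{regularise}: I replace \({ n }\) by a one-parameter family of real-analytic nonlinearities \({ n_\varepsilon }\) that agree with \({ n }\) outside an \({ \varepsilon }\)-neighbourhood of~\({ 0 }\) and preserve oddness in case~\eqref{eq:n-sgn}. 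For the smooth regularised problem, local bifurcation from \({ \varphi \equiv 0 }\) follows Crandall--Rabinowitz: the linearisation \({ \abs{ \textnormal{D} }^{-\alpha} - c }\), restricted to even mean-zero functions, has one-dimensional kernel \({ \vectorspan(\cos x) }\) exactly when \({ c = 1 }\), and I bifurcate from this first eigenvalue, continuing analytically along a global branch \({ \mathcal R_\varepsilon }\).

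The next task is to propagate a rigid a priori structure along \({ \mathcal R_\varepsilon }\) that is stable as \({ \varepsilon \to 0 }\). The convolution kernel \({ K_\alpha \coloneqq \mathscr F^{-1}(\abs{ \cdot }^{-\alpha}) }\) is positive, symmetric, integrable, and strictly decreasing on \({ (0, \uppi) }\), with blow-up \({ K_\alpha(x) \eqsim \abs{ x }^{\alpha - 1} }\) near the origin. Standard touching and sliding arguments then force evenness, zero mean, and strict monotonicity of \({ \varphi }\) on \({ (-\uppi, 0) }\); in case~\eqref{eq:n-sgn} the involution \({ \varphi(x) \mapsto -\varphi(-x - \uppi) }\) commutes with the equation, so antisymmetry about~\({ -\uppi/2 }\) is preserved as well. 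The crucial pointwise bound \({ \varphi \leq \mu = (c / p)^{1/(p - 1)} }\) is obtained by evaluating the equation at the crest and observing that the scalar map \({ s \mapsto c s - n(s) }\) attains its maximum precisely at \({ s = \mu }\). Uniform \({ \textnormal{C}^\alpha }\)-estimates in \({ \varepsilon }\), derived from the convolution identity \({ c \varphi - n(\varphi) = K_\alpha * \varphi }\) and the kernel asymptotics, combine with Arzelà--Ascoli to let me send \({ \varepsilon \to 0 }\) and obtain a genuine global curve for the original equation.

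The final step is to identify the terminal behaviour. The Buffoni--Toland alternatives, combined with the above a priori bounds, reduce to saturation \({ \max \varphi_n \to \mu }\) (equivalently \({ c_n \to \tfrac{p}{p - 1} \norm{ K_\alpha }_{\textnormal{L}^1} }\)), and a compactness argument delivers a limit profile \({ \varphi }\) with \({ \varphi(0) = \mu }\). To establish the exact Hölder behaviour \({ \mu - \varphi(x) \eqsim \abs{ x }^{\alpha} }\) at the crest, I insert \({ c = p \mu^{p - 1} }\) into the identity
\[
c (\mu - \varphi(x)) - (n(\mu) - n(\varphi(x))) = (K_\alpha * \varphi)(0) - (K_\alpha * \varphi)(x)
\]
and compare both sides against \({ \abs{ x }^{\alpha} }\) using the sharp kernel asymptotics and the monotonicity of \({ \varphi }\). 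The upper bound \({ \mu - \varphi(x) \lesssim \abs{ x }^{\alpha} }\) follows rather directly from the kernel estimate, whereas the matching lower bound --- which requires quantitative control of how \({ c \varphi - n(\varphi) }\) decays off the crest and feeds back through the convolution --- is the main technical obstacle, amplified by the nonsmoothness of~\({ n }\). Smoothness of \({ \varphi }\) away from the crest (and away from the trough in the antisymmetric case) then follows by standard bootstrap on \({ K_\alpha * n(\varphi) }\), in which \({ \varphi }\) remains bounded away from any singular point of~\({ n }\).
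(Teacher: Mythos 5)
Your overall architecture — regularise the nonlinearity, run Crandall--Rabinowitz and Buffoni--Toland for the regularised problem, derive a priori bounds uniform in the regularisation, pass to limits — matches the paper. But several of your claims reverse or overstate what the argument can actually deliver.

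\textbf{Regularisation.} An analytic $n_\varepsilon$ cannot agree with $\abs{x}^p$ (non-integer $p$) on any open set: agreement on a set with an accumulation point forces global agreement, and $\abs{x}^p$ is not analytic at $0$. The regularisation has to be \emph{global}, as in the paper's choice $n^\epsilon(x) = (x^2 + \epsilon^2)^{p/2} - \epsilon^p$, which nowhere coincides with $n$ but converges locally uniformly.

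\textbf{Antisymmetry.} You assert that because the involution $\varphi \mapsto -\varphi(\cdot - \uppi)$ commutes with the equation, antisymmetry about $-\uppi/2$ "is preserved" along the branch. This is precisely the gap the paper flags and works around. Commutation of a symmetry with the equation does not propagate that symmetry along a global bifurcation curve; the paper proves local antisymmetry of the bifurcation coefficients only for smooth odd-power $n$ (\cref{thm:local-antisymmetry}), explicitly states it does not know how to establish it globally, and instead \emph{redefines the ambient space $\mathcal{X}^\beta$} to be the antisymmetric subspace so that the constructed branch lies there by fiat. You need that extra step.

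\textbf{Wave speed.} Your parenthetical "(equivalently $c_n \to \tfrac{p}{p-1}\norm{K_\alpha}_{L^1}$)" is false. Saturation $\max\varphi_n \to \mu$ means $\max\varphi_n - (c_n/p)^{1/(p-1)} \to 0$ for whatever sequence $c_n$ you have; it does not pin $c_n$ to the upper bound. The speed of the limiting extreme wave satisfies only the strict inequality $c < \tfrac{p}{p-1}\norm{K_\alpha}_{L^1}$ (\cref{thm:bound-wavespeed}), plus a positive lower bound (\cref{thm:lower-bound-speed}).

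\textbf{Hölder bounds.} You have the difficulty reversed. In the paper's argument the lower bound $\mu - \varphi(x) \gtrsim \abs{x}^\alpha$ is the short, clean part (\cref{thm:lower-bound-zero}, via Fatou and kernel monotonicity); the upper bound together with global $\textnormal{C}^\alpha(\T)$ regularity is the long, technical part (\cref{thm:regularity}, roughly three pages of bootstrap, second-difference estimates of $K_\alpha$, and interpolation to push the exponent up to $\alpha$ — with the extra wrinkle that for $\alpha > 1/2$ one must work with derivatives because $\abs{\textnormal{D}}^{-\alpha}\varphi$ crosses index~1 in the Hölder--Zygmund scale). The assertion that the upper bound "follows rather directly from the kernel estimate" does not hold: a direct estimate only gives $\mu - \varphi(x) \lesssim \abs{x}^{\alpha/2}$ because a priori you only know $\varphi$ is continuous.

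\textbf{Limit $\varepsilon \to 0$.} You claim uniform $\textnormal{C}^\alpha$ estimates in $\varepsilon$ together with Arzel\`a--Ascoli "obtain a genuine global curve for the original equation." You cannot expect this: analytic bifurcation theory, on which the global curve rests, genuinely requires the regularisation, and the uniform $\textnormal{C}^\alpha$ control is only available for solutions that touch the regularised critical value $\mu^\epsilon$. The paper therefore passes to the limit $s\to\infty$ along $\mathfrak{C}^\epsilon$ \emph{first}, extracts a terminal extreme wave for each $\epsilon$, and only then sends $\epsilon \searrow 0$ on that single family of highest waves — not on the whole curve.

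These are not cosmetic: points 2, 4 and 5 in particular require substantive repair before the argument closes.
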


\begin{remark}
\({ A \eqsim B }\) is short for \({ A \lesssim B \lesssim A }\), where \({ A \lesssim B }\) symbolises that \({ A \leq \lambda B }\) for a constant~\({ \lambda > 0 }\). We say that \({ A(x) \lesssim B(x) }\) (etc.\@) holds \emph{uniformly} over a region if \({ \lambda }\) does not depend on~\({ x }\) there.
\end{remark}

\begin{figure}[h!]
	\vspace*{-.5em}%
    \centering%
    \begin{subfigure}[b]{\textwidth}%
    \centering%
\setlength\figureheight{.22\textwidth}%
\setlength\figurewidth{\textwidth}%
\input{tikz/waves-abs.tikz}%
        \caption{Waves for the nonlinearity~\eqref{eq:n-abs} with~\({ p = \sqrt{ \uppi } \approx 1.77 }\).}
       \label{fig:waves-abs}
    \end{subfigure}
    \\[2em] %
    \begin{subfigure}[b]{\textwidth}%
\centering%
\setlength\figureheight{.22\textwidth}%
\setlength\figurewidth{\textwidth}%
\input{tikz/waves-sgn.tikz}%
        \caption{Waves for the nonlinearity~\eqref{eq:n-sgn} with~\({ p = \textnormal{e} \approx 2.71 }\).}
        \label{fig:waves-sgn}
    \end{subfigure}
    \caption{Numerical approximations of large-amplitude, \({ \textnormal{C}^\alpha }\)-regular periodic waves for various~\({ \alpha }\)'s and nonlinearities~\eqref{eq:n} in \cref{thm:existence} using the Spec\-TraVVave software~\autocite{KalMolVer2017a} for bifurcation in nonlinear dispersive evolution equations.}
    \label{fig:waves}
\end{figure}

\subsection{Background}

Full-dispersion nonlinear evolution equations such as~\eqref{eq:evolution} have seen a keen interest in the recent years as nonlocal improvements of classical local equations. In particular, surface-wave models in shallow water of this class with various dispersive operators approximate the full water-wave equations~\autocite{Eme2021a,Eme2021b} and capture singular features not found in their local counterparts. \Cref{eq:evolution} with \({ n(u) = u^2 }\) may be seen as a dispersive perturbation of Burgers' equation and~\autocite{Hur2012g} outlines how this case for \({ \alpha = \frac{ 1 }{ 2 } }\) is perhaps the simplest model incorporating the linear behaviour and characteristic nonlinearity of the water-waves problem. Both~\autocite{Hur2012g} \({\bigl(\text{for } \alpha = \frac{ 1 }{ 2 } \bigr) }\) and~\autocite[Theorem~3.1]{CasCorGan2010a} (for~\({ \alpha \in (0, 1) }\)) prove that solutions of~\eqref{eq:evolution} blow up in finite time for certain initial data. The resulting singularities occur in at least two ways: wave breaking~\autocite{HurTao2014}, in which the spatial derivative of a bounded solution of~\eqref{eq:evolution} blows up, or sharp crests in travelling-wave solutions -- reminiscent of the highest Stokes' wave~\autocite{TolBen1978a} -- which is the subject of this paper. We refer also to~\autocite{LinPilSau2014a,KleLinPilSau2018a,EhrWan2019j,KleSauWan2022c,Ria2021u} for other results concerning singularities, well-posedness, persistence, and existence time of solutions.

Classical Fourier analysis shows that \({ \abs{ \textnormal{D} }^{- \alpha} }\) constitutes a singular convolution operator on~\({ \R }\) with kernel~\({ \abs{  }^{ \alpha - 1 } }\) and describes an eigenfunction of~\({ \mathscr{F} }\) when~\({ \alpha = \frac{ 1 }{ 2 } }\). Related to this case is the recent work by Ehrnström~\& Wahlén~\autocite{EhrWah2019a} on the Whitham equation~\autocite{Whitham1967}, being a shallow-water model of type~\eqref{eq:evolution} with inhomogeneous dispersion \({ \sqrt{ \tanh (\textnormal{D}) / \textnormal{D} } }\) and~\({ n(u) = u^2 }\). Its corresponding symbol behaves as that of the KdV~equation for small frequencies and decays like \({ \abs{ \xi }^{- \frac{ 1 }{ 2 }} }\) as \({ \abs{ \xi } \to \infty }\), for which the kernel may be written as \({ \abs{ x }^{- \frac{ 1 }{ 2 }} }\) plus a regular term. The existence of a highest, cusped steady solution whose behaviour at the crest is like~\({ 1 - \abs{ x }^{ \frac{ 1 }{ 2 } }  }\) modulo constants was conjectured by Whitham~\autocite[p.~479]{Whitham1974}, and the authors of~\autocite{EhrWah2019a} found this exactly \({ \textnormal{C}^{ \frac{ 1 }{ 2 }} }\)~wave on~\({ \T }\) based on properties of the kernel, precise regularity estimates, and global bifurcation theory, building on preliminary analysis from~\autocite{EhrnstromKalisch2009,EhrnstromKalisch2013}. New work~\autocite{TruWahWhe2021a} also proves the existence of an extreme \({ \textnormal{C}^{ \frac{ 1 }{ 2 }} }\) solitary-wave solution by means of nonlocal center-manifold theory for the global bifurcation.

\begin{table}[h!]
\vspace*{-.3em}%
\floatbox[{\capbeside\thisfloatsetup{capbesideposition={right,center},capbesidesep=quad}}]{table}[\FBwidth]
{\caption{Exact and global regularity of extreme periodic waves in negative-order equations with inhomogeneous or homogeneous dispersion. This paper also treats nonsmooth nonlinearities~\eqref{eq:n} for any real order \({ p > 1 }\), with applicability to the other works (that considered smooth~\({ n(u) = u^2 }\)).}\label{tab:overview}}
{%
\renewcommand{\arraystretch}{1.3}
\centering
\begin{tabularx}{.69\textwidth}{m{2.4cm}*{3}{>{\centering\arraybackslash}X}}
\toprule
\multirow{3.25}{*}{\hspace{.2cm}\parbox{2cm}{\centering \textsc{Dispersive operator}}} & \multicolumn{3}{c}{\textsc{Negative order and regularity}} \\
\cmidrule(lr){2-4}
& \({ \alpha \in (0, 1) }\) & \({ \alpha = 1 }\) & \({ \alpha > 1 }\) \\
& \({ \alpha }\)-Hölder & \({ \log }\)-Lipschitz & Lipschitz \\
\midrule
\centering\textit{Inhomogeneous:} \({ (\tanh(\textnormal{D})/ \textnormal{D})^{\alpha} }\) or~\({ (1 + \abs{ \textnormal{D} }^2)^{-\textstyle \frac{ \alpha }{ 2 }} }\) & \autocite{EhrWah2019a,EhrMaeVar2022a,Afr2021a,Ork2022a} & \autocite{EhrJohCla2019a,EhrMaeVar2022a} & \autocite{Le2021a} \\[2em]
\centering\parbox{2.4cm}{\centering\textit{Homogeneous:} \({ \abs{ \textnormal{D} }^{- \alpha} }\)} & \textcolor{artcolor}{This paper} & \autocite{EhrMaeVar2022a} & \autocite{BruDha2021a} \\[1.5ex]
\bottomrule
\end{tabularx}%
}\vspace*{-.6em}
\end{table}

Inspired by the results for the Whitham equation, there has been a series of papers concerned with exact and global regularity of extreme periodic waves in similar negative-order equations with prototypical inhomogeneous or homogeneous dispersion. As shown in \cref{tab:overview}, one obtains \({ \textnormal{C}^\alpha}\)~waves for~\({ \alpha \in (0, 1) }\), noting that~\autocite{EhrWah2019a} consider \({ (\tanh(\textnormal{D})/ \textnormal{D})^{\alpha} }\) for~\({ \alpha = \frac{ 1 }{ 2 } }\) and~\autocite{Afr2021a} the same dispersion for~\({ \alpha \in (0, 1) }\), and that~\autocite{Ork2022a} studies \({ (1 + \abs{ \textnormal{D} }^2)^{- \alpha / 2 } }\) for~\({ \alpha \in (0, 1) }\). When \({ \alpha = 1 }\), the extreme waves turn out to be \({ \log }\)-Lipschitz~\autocite{EhrJohCla2019a,EhrMaeVar2022a} in the sense that the behaviour at the crests is like \({ 1 - \abs{ x \log \abs{ x } } }\). In~\autocite{EhrMaeVar2022a}, they even provide exact asymptotics by new techniques, with applicability also to a subregime of~\({ \alpha \in (0, 1) }\) including the Whitham equation. We note that global existence of weak solutions are guaranteed by~\autocite{BreNgu2014s} in the homogeneous case of \({ \alpha = 1 }\) known as the Burgers–Hilbert equation. Finally, the waves are all Lipschitz~\autocite{BruDha2021a,Le2021a} when~\({ \alpha > 1 }\), and one naturally conjectures that the Lipschitz angles vanish as~\({ \alpha \searrow 1 }\). See further~\autocite{GeyPel2019a} for uniqueness and instability of the highest wave when~\({ \alpha = 2 }\), corresponding to the reduced Ostrovsky equation, and also~\autocite{Arn2019b} for the existence of extreme Lipschitz waves for Degasperis--Procesi equation.

\subsection{Contributions}

As promised and illustrated in \cref{tab:overview}, we complete the regularity picture for extreme periodic waves in the given negative-order class of dispersive equations. The regularity analysis emerges from the overall structure of~\autocite{EhrWah2019a} with the following key differences:
\begin{enumerate}[ref=\roman*),itemsep=.5\parskip]
\item
Whereas~\autocite{EhrWah2019a,BruDha2021a,Afr2021a} obtain monotonicity properties of the kernels based on a general characterisation of completely monotone functions or sequences, we establish monotonicity of the singular kernel
\begin{equation*}
K_\alpha(x) \coloneqq \mathscr{F}^{-1}( \abs{  }^{ - \alpha } )(x)
\end{equation*}
on~\({ \T }\) by computing an explicit integral representation valid for all~\({ \alpha > 0 }\), and use the Poisson summation formula to derive its precise singular behaviour (\({ \eqsim \abs{ x }^{ \alpha - 1} }\) as~\({ \abs{ x } \to 0 }\)) from the situation on~\({ \R }\).
\item
Since \({ K_\alpha }\) has only algebraic but not exponential decay (unlike the kernels in~\autocite{EhrWah2019a,Afr2021a,Ork2022a}), extra care must be applied to the finite-difference estimates for \({ \abs{ \textnormal{D} }^{ - \alpha}u }\) when~\({ \alpha }\) is arbitrarily close to~\({ 1 }\). In fact, we must exercise order-optimal estimates in order for the integrals to converge.
\item
We treat a class of nonlinearities, including sign-dependent ones~\eqref{eq:n-sgn}, in the regularity estimates, which amongst others requires the use of suitable properties of composition operators on Hölder spaces.
\end{enumerate}

The study of nonsmooth nonlinearities -- with both slow~(\({ p \gtrapprox 1 }\)) and arbitrary (polynomially) fast growth in~\eqref{eq:n} -- also poses new challenges since analytic bifurcation theory cannot be applied directly. We resolve this issue by analytically regularising the nonlinearities and proving that important features related to wave regularity and speed hold uniformly as the regularisation vanishes. The approach is strikingly simple (see~\eqref{eq:n-reg}).

In the special case of smooth \({ n(x) = x^p }\) with \({ 2 \leq p \in \N }\), we also compute in \cref{thm:bifurcation-formulas-smooth} local bifurcation formulas for all~\({ p }\), which may be of independent interest. We are also able to deduce the overall structure of the bifurcation formulas along the entire local bifurcation curve when \({ p }\) is odd. As for the case of general sign-dependent nonlinearities~\eqref{eq:n-sgn}, we establish that the highest waves exhibit antisymmetry and thus also contain an inverted cusp at the troughs, as illustrated in~\cref{fig:waves}. This construction appears to be completely new in the context of large-amplitude singular waves and sheds light on underlying symmetry principles. 

With appropriate modifications, these results are also transferable to other nonlocal dispersive equations. In particular, one may obtain such \enquote{doubly-cusped} periodic solutions (with zero mean) in the full scale of equations in \cref{tab:overview} with generalised nonlinearities of type~\eqref{eq:n}. Specifically, consider the evolution equation
\begin{equation} \label{eq:general-evolution}
u_t + (\mathcal{L}_\alpha u + n(u))_x = 0,
\end{equation}
where \({ \mathcal{L}_\alpha }\) is any of the dispersive operators
\begin{equation*}
(\tanh(\textnormal{D})/ \textnormal{D})^{\alpha}, \qquad \bigl(1 + \abs{ \textnormal{D} }^2 \bigr)^{-\textstyle \frac{ \alpha }{ 2 }} \qquad \text{or} \qquad \abs{ \textnormal{D} }^{ -\alpha }
\end{equation*}
for \({ \alpha \in (0, \infty) }\) and \({ n }\) is as in~\eqref{eq:n}. By readily adapting the regularity estimates in \autocite{EhrWah2019a,EhrJohCla2019a,Le2021a,BruDha2021a,Afr2021a,Ork2022a,EhrMaeVar2022a} with the estimates for general nonlinearities considered here and applying the regularisation procedure in the global bifurcation analysis, we can also deduce the following analogous result of~\cref{thm:existence}. Here \({ C \coloneqq 1 }\) for the inhomogeneous operators (the value at the origin for their symbol) and \({ C \coloneqq \norm{ \mathscr{F}^{-1}( \abs{  }^{ - \alpha } ) }_{ \textnormal{L}^1(\T) }  }\) in the homogeneous case.

\begin{theorem}
For all \({ \alpha \in (0, \infty) }\) and \({ p > 1 }\), the dispersive equation~\eqref{eq:general-evolution} admits a nontrivial periodic travelling-wave solution~\({ \varphi }\) with speed \({ c < \frac{ p }{ p - 1 } C }\), with zero mean in the homogeneous case and in case~\eqref{eq:n-sgn}. The solution is even (about \({ x \in 2 \uppi \Z }\)) and strictly increasing on~\({ (-\uppi, 0) }\), and satisfies
\begin{equation*}
\max \varphi = \varphi(0) = \mu \qquad \text{and} \qquad \varphi \in
\begin{cases}
\textnormal{C}^\alpha(\T) & \text{if } \alpha \in (0, 1); \\
\textnormal{log-Lipschitz}(\T) & \text{if } \alpha = 1; \\
\textnormal{Lipschitz}(\T) & \text{if } \alpha > 1,
\end{cases}
\end{equation*}
where again \({ \mu = (c/p)^{1/(p - 1)} }\). Moreover, the estimate
\begin{equation*}
\mu - \varphi(x) \eqsim
\begin{cases}
\abs{ x }^{ \alpha } & \text{if } \alpha \in (0, 1); \\
\abs{ x \log \abs{ x } } & \text{if } \alpha = 1; \\
\abs{ x } & \text{if }  \alpha > 1,
\end{cases}
\end{equation*}
holds uniformly around the extreme point~\({ x = 0 }\).

One has that \({ \varphi }\) is smooth around~\({ - \uppi }\) in case~\eqref{eq:n-abs}, while \({ \varphi }\) is antisymmetric about \({ - \frac{ \uppi }{ 2 } }\) in case~\eqref{eq:n-sgn}, thereby featuring inverted cusps/peakons at~\({ \uppi \Z }\).
\end{theorem}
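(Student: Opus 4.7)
The plan is to follow the same blueprint as \cref{thm:existence} but to replace the kernel analysis and the pointwise regularity estimates with the corresponding results already available in the literature for each dispersive operator. First I would cast the travelling-wave problem for~\eqref{eq:general-evolution} in its standard convolution form \({ c \varphi - \mathcal{L}_\alpha \varphi - n(\varphi) = B }\), where the constant \({ B }\) is fixed by the zero-mean requirement (homogeneous case or case~\eqref{eq:n-sgn}) or left as a free parameter in the inhomogeneous case with~\eqref{eq:n-abs}, where \({ \mathcal{L}_\alpha }\) has a nonzero symbol at the origin. In each of the three operator families one has a Fourier symbol that is positive, even and monotone, yielding a convolution kernel \({ K_\alpha }\) on~\({ \T }\) that is even, positive, integrable and monotone away from the origin, with known precise singular behaviour at~\({ 0 }\) (algebraic~\({ \abs{x}^{\alpha-1} }\) for~\({ \alpha \in (0,1) }\) as in this paper, logarithmic~\({ \log \abs{x}^{-1} }\) at~\({ \alpha = 1 }\), and bounded with suitable derivative blow-up for~\({ \alpha > 1 }\)); these are exactly the inputs exploited in \autocite{EhrWah2019a,EhrJohCla2019a,BruDha2021a,Afr2021a,Ork2022a,EhrMaeVar2022a,Le2021a}.

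Next I would run the local and global bifurcation part exactly as in our treatment of~\eqref{eq:evolution}. The key technical device is the regularisation~\eqref{eq:n-reg} of the (possibly nonsmooth) nonlinearity \({ n }\), which turns the problem into an analytic one for each \({ \varepsilon > 0 }\) so that Buffoni--Toland style global bifurcation applies and produces, for every \({ \varepsilon }\), a continuous branch of even, zero-mean, monotone (on~\({ (-\uppi, 0) }\)) solutions \({ \varphi_\varepsilon }\) along which either the wave speed becomes unbounded or the solution touches the critical height \({ \mu = (c/p)^{1/(p-1)} }\). The upper bound \({ c < \frac{p}{p-1} C }\) on the wave speed is obtained as in \cref{thm:existence} from a Poincaré-type inequality combined with the \({ \textnormal{L}^1 }\)-bound \({ C }\) on \({ K_\alpha }\) (resp.\ the value~\({ 1 }\) of the symbol at the origin in the inhomogeneous cases), so the second alternative must occur. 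Passing to the limit \({ \varepsilon \to 0 }\) along the branch produces the desired highest wave; the convergence is uniform on~\({ \T }\) after a compactness argument based on a priori Hölder/Lipschitz bounds independent of~\({ \varepsilon }\).

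For the uniform regularity at the crest and the sharp asymptotics \({ \mu - \varphi(x) \eqsim \abs{x}^\alpha }\), \({ \abs{x \log \abs{x}} }\) or \({ \abs{x} }\), the argument is a reprise of the pointwise nonlinear identities used here for \({ \abs{\textnormal{D}}^{-\alpha} }\) but with the sharp estimates for the relevant convolution kernel substituted from the appropriate reference in \cref{tab:overview}; the only genuinely new ingredient is the replacement of the smooth quadratic nonlinearity by \({ n }\) in~\eqref{eq:n}, which is handled using the composition-operator bounds on Hölder spaces developed for the proof of \cref{thm:existence} together with the elementary inequalities \({ \abs{n(a) - n(b)} \lesssim (\abs{a}^{p-1} + \abs{b}^{p-1}) \abs{a - b} }\). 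Finally, in case~\eqref{eq:n-sgn} the nonlinearity is odd, so the mapping \({ \varphi(\,\cdot\,) \mapsto -\varphi(\,\cdot\, - \uppi) }\) is a symmetry of the steady equation preserving the zero-mean constraint; by building this symmetry into the bifurcation scheme (restricting to the closed subspace of functions antisymmetric about~\({ -\uppi/2 }\)) we obtain solutions with \({ \min \varphi = \varphi(-\uppi) = -\mu }\) and, by the same pointwise analysis applied at~\({ -\uppi }\), the inverted cusp.

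The main obstacle I expect is the uniform-in-\({ \varepsilon }\) a priori regularity estimate for \({ \alpha \geq 1 }\), since there the singular integrals defining \({ \mathcal{L}_\alpha \varphi }\) are barely convergent and the cancellation between the kernel and the finite-difference \({ \varphi(x) - \varphi(y) }\) must be exploited order-optimally, just as in the homogeneous \({ \alpha \nearrow 1 }\) case treated in this paper. This delicate estimate is the point at which the nonsmoothness of \({ n }\) interacts nontrivially with the sharpness of the kernel asymptotics, and it is where the bulk of the verification work would lie; everything else is a faithful transcription of existing machinery.
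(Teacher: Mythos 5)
Your proposal follows exactly the route the paper takes: the paper itself offers no detailed proof of this theorem but instead remarks that one obtains it by adapting the kernel estimates of the cited references in Table~1 with the regularisation of the nonsmooth nonlinearity and the antisymmetry-subspace construction developed for Theorem~\ref{thm:existence}, which is precisely what you reconstruct, including the correct observation that the zero-mean constraint is forced only in the homogeneous case and in case~\eqref{eq:n-sgn}, and the correct identification of the uniform-in-$\epsilon$ crest estimate for $\alpha \geq 1$ as the delicate step. One small terminological slip: the wave-speed bound of Lemma~\ref{thm:bound-wavespeed} is not a Poincaré-type inequality but a direct $\textnormal{L}^1$-comparison of the values of $\mathcal{L}_\alpha\varphi$ at the extrema of $\varphi$ against $c\varphi - n(\varphi)$, and in the inhomogeneous case the constant~$C$ in the speed bound is the symbol value $1$ at the origin rather than $\norm{K_\alpha}_{\textnormal{L}^1(\T)}$, two numbers that coincide precisely because the inhomogeneous kernels are nonnegative probability densities.
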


A similar statement may be formed for extreme Lipschitz waves in a generalised version of the Degasperis--Procesi equation with the nonlinearities~\eqref{eq:n} by combining the analysis here with that of~\autocite{Arn2019b,Ork2022a}.

\subsection{Outline of the analysis}

For homogeneous dispersion one has a choice as to what class of functions \({ \abs{ \textnormal{D} }^{- \alpha} }\) should act upon in the interpretation of~\eqref{eq:evolution}. We restrict our attention to functions with zero mean (\({ \int_{ \T } u(\cdot, x) \dee x = 0 }\)), but note that other alternatives such as equivalence classes of functions that differ by a constant are possible; see for instance~\autocite{MonPelSal2020d} on homogeneous Sobolev-type spaces.

We set up~\eqref{eq:evolution} in steady variables \({ u(t, x) \coloneqq \varphi(x - ct) }\) with wave speed~\({ c > 0 }\), so that, after integration, \eqref{eq:evolution} takes the form
\begin{equation} \label{eq:steady}
\abs{ \textnormal{D} }^{- \alpha} \varphi = N(\varphi; c) + \textstyle\fint_\T n(\varphi),
\end{equation}
where we have introduced \({ N(\varphi; c) \coloneqq c \varphi - n(\varphi) }\), and the mean \({ \textstyle\fint_\T n(\varphi) \coloneqq \frac{ 1 }{ 2 \uppi } \int_{ \T } n(\varphi(y)) \dee y }\) of \({ n(\varphi) }\) is the constant of integration. One may observe that
\begin{align} \label{eq:range}
N'(\varphi) > 0 \quad \Leftrightarrow \quad n'(\varphi) < c \quad \Leftrightarrow \quad
\left\{\begin{aligned}
\varphi\phantom{|} &< \mu \textnormal{ in case~\eqref{eq:n-abs}};\\
\abs{ \varphi} &< \mu \textnormal{ in case~\eqref{eq:n-sgn},}
\end{aligned}\right.
\end{align}
in which the value
\begin{equation} \label{eq:highest-value}
\mu \coloneqq (c / p)^{1/(p-1)},
\end{equation}
being the first positive critical point for~\({ N(\varphi) }\), turns out to be the maximum of the highest wave. As the regularity analysis will reveal, the quadratic nature near \({ \varphi = \mu }\), where \({ N'' }\) is strictly negative, causes in partnership with~\({ \abs{ \textnormal{D} }^{- \alpha} }\) the singular behaviour of~\({ \varphi }\) at the crest (and through, in case~\eqref{eq:n-sgn}).

With regards to the precise \({ \textnormal{C}^\alpha }\)~regularity estimates, we consider as in~\autocite{EhrWah2019a} fine details of local regularity and first- and second-order differences of both~\({ u }\), \({ \abs{ \textnormal{D} }^{- \alpha} u }\), and~\({ n(u) }\) in connection with the Hölder seminorm. We first establish global \({ \textnormal{C}^\beta }\)~regularity for all~\({ \beta < \alpha }\), then the exact \({ \alpha }\)-Hölder estimate at~\({ 0 }\), and finally global \({ \textnormal{C}^\alpha }\)~regularity with help of an interpolation argument. A~key property in this setting is that \({ \abs{ \textnormal{D} }^{- \alpha} }\) is \({ \alpha }\)-smoothing on the scale of Hölder--Zygmund spaces, and that if \({ \abs{ \textnormal{D} }^{- \alpha} u }\) is \({ (2\alpha) }\)-Hölder continuous at a point, then~\({ u }\) is \({ \alpha }\)-Hölder continuous at that point for~\({ \alpha \in \bigl( 0, \tfrac{ 1 }{ 2 } \bigr]  }\). However, when \({ \alpha > \tfrac{ 1 }{ 2 } }\) (remember that~\autocite{EhrWah2019a} corresponds to~\({ \alpha = \tfrac{ 1 }{ 2 } }\)), \({ \abs{ \textnormal{D} }^{- \alpha} u }\) passes index~\({ 1 }\) on the Hölder--Zygmund scale, and we must partially work with derivatives as in~\autocite{EhrJohCla2019a}.

When it comes to the bifurcation analysis, we first establish small-amplitude waves by the local Crandall–Rabinowitz bifurcation theorem~\autocite[Theorems~8.3.1 and~8.4.1]{BufTol2003a}. It is interesting to note that the regularisation of~\({ n }\) lightens the computation of the local bifurcation formulas; case~\eqref{eq:n-abs} acts essentially as~\({ u^2 }\) and case~\eqref{eq:n-sgn} behaves like~\({ u^3 }\). As for the construction of the highest waves, we make use of the analytic global bifurcation theory of Buffoni and Toland~\autocite{BufTol2003a}. One obtains, after ruling out certain possibilities, a global, locally analytic curve~\({s \mapsto \varphi^\epsilon(s) }\) of smooth sinusoidal waves, along which \({ \max_{x \in \T} \varphi^\epsilon(s)(x) }\) approaches a particular value~\({ \mu^\epsilon }\) depending on the wave speed and the regularisation parameter~\({ \epsilon }\) (see~\eqref{eq:highest-value} and~\eqref{eq:highest-value-reg}). Although we are not able to establish unconditional antisymmetry in case~\eqref{eq:n-sgn}, we enforce this property along the global branch by working in a subspace. Coupled with the a~priori regularity estimates for solutions touching~\({ \mu^\epsilon }\) from below, it is then possible to extract a subsequence of \({ (\varphi^\epsilon(s))_s }\) converging to a solution~\({ \varphi^\epsilon }\) with both \({ \max \varphi^\epsilon = \mu^\epsilon}\) and the exact, global \({ \alpha }\)-Hölder continuity. Finally, we show that \({ \varphi^\epsilon }\) converges to a solution of~\eqref{eq:evolution} with the same properties as~\({ \epsilon \searrow 0 }\).

The outline of the paper is as follows. In~\cref{sec:preliminaries} we focus on properties and representations of~\({ \abs{ \textnormal{D} }^{- \alpha} }\) and \({ K_\alpha }\) on~\({ \T }\) together with the relevant function spaces. In~\cref{sec:regularity} we study a~priori properties of solutions---especially, what concerns the \({ \alpha }\)-Hölder continuity when \({ \max \varphi = \mu }\), which is the most technical part. Finally, in \cref{sec:bifurcation} we first consider the local bifurcation analysis, and then study what happens at the end of the global bifurcation curve, supported by the theory in \cref{sec:regularity}.

\enlargethispage{.5\baselineskip} 

\section{Properties of \({ \abs{ \textnormal{D} }^{- \alpha} }\) and functional-analytic setting} \label{sec:preliminaries}

\subsection{Representations of the kernel}

On the real line it is well known that the inverse Fourier transform of the symbol~\({ \abs{  }^{ - \alpha }  }\) for \({ \alpha \in (0, 1) }\) equals
\begin{equation} \label{eq:kernel-on-R}
\mathscr{F}^{-1}(\abs{  }^{ - \alpha } )(x) = \gamma_\alpha \abs{ x }^{ \alpha - 1 }
\end{equation}
in the sense of (tempered) distributions, with \({ \gamma_\alpha^{-1} \coloneqq 2 \Upgamma(\alpha) \sin \left( \tfrac{ \uppi }{ 2 }(1 - \alpha) \right)  }\) using the normalisation
\begin{equation*}
(\mathscr{F} \varphi)(\xi) = \widehat{ \varphi }(\xi) = \int_{ \R } \varphi(x) \, \textnormal{e}^{- \textnormal{i}\xi x} \dee x,
\end{equation*}
so that \({ \mathscr{F}^{-1}(\varphi)(x) = \frac{ 1 }{ 2\uppi } \mathscr{F}(\varphi)(-x) }\). Here \({ \Upgamma }\) is the gamma function, and we observe that \({ \gamma_\alpha \searrow 0 }\) as \({ \alpha \searrow 0 }\) and \({ \gamma_\alpha \nearrow \infty }\) as~\({ \alpha \nearrow 1 }\). We are interested in the action of~\({ \abs{ \textnormal{D} }^{ - \alpha} }\) in the periodic setting, and by the convolution theorem this amounts to understanding the periodic kernel
\begin{equation*}
K_\alpha(x) \coloneqq \tfrac{ 1 }{ 2 \uppi } \sum_{ k \neq 0 } \abs{ k }^{ - \alpha } \textnormal{e}^{ \textnormal{i} k x}
\end{equation*}
for which
\begin{equation*}
\widehat{ K_\alpha } = \abs{  }^{ - \alpha }  \quad \textnormal{on } \Z \setminus \Set{ 0 } \qquad \textnormal{and} \qquad \abs{ \textnormal{D} }^{- \alpha} \varphi = K_\alpha \ast \varphi \quad \textnormal{on } \T.
\end{equation*}
Here \({ \varphi }\) has zero mean so that \({ \widehat{ \varphi }(0) = 0 }\), and the normalisation is again \({ \widehat{ \varphi }(k) = \int_{ \T } \varphi(x) \, \textnormal{e}^{ - \textnormal{i} k x }  \dee x }\). A~naïve application of the Poisson summation formula yields that
\begin{equation*}
\tfrac{ 1 }{ 2 \uppi } \sum_{ k \in \Z } \abs{ k }^{ - \alpha } \textnormal{e}^{ \textnormal{i} k x} \textnormal{ \enquote{\({ = }\)} } \sum_{ k \in \Z } \gamma_\alpha \abs{ x + 2 \uppi k }^{ \alpha - 1 },
\end{equation*}
which, although it is nonsense due to divergence on both sides, nevertheless suggests that the kernel on~\({ \T }\) mimics the singularity of the kernel on~\({ \R }\). In fact, \autocite[Theorem~2.17]{SteWei1971a}~establishes the following result with help of a cut-off argument in the Poisson summation formula. We include a proof of this formula for two reasons. First, it is essential in our work. Second, the technique may be useful to define the analogues on~\({ \T }\) of other known kernels on~\({ \R }\). The latter can be used to study other nonlocal weakly dispersive equations.

\begin{proposition}[\phantom{}{\protect\autocite[Theorem~2.17]{SteWei1971a}}] \label{thm:periodic-kernel}
The periodic convolution kernel may be written as
\begin{equation*}
K_\alpha = \gamma_\alpha \abs{  }^{\alpha - 1 }  + K_{ \alpha, \textnormal{reg}}
\end{equation*}
on \({ (-\uppi, \uppi) }\), where \({ K_{ \alpha, \textnormal{reg}} }\) is an even, smooth function. In particular, \({ K_\alpha \in \textnormal{L}^1(\T)}\).
\end{proposition}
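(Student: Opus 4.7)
The plan is to obtain the decomposition by Poisson summation, handling carefully the divergence at the zero mode. The main intuition is that the singular part of \({ K_\alpha }\) at the origin should mirror that of the real-line inverse Fourier transform \({ \mathscr{F}^{-1}(\abs{\cdot}^{-\alpha}) = \gamma_\alpha \abs{x}^{\alpha-1} }\) from \eqref{eq:kernel-on-R}, since the two Fourier transforms agree on \({ \Z \setminus \Set{0} }\).

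First, I would introduce a smooth even cut-off \({ \chi \in \textnormal{C}_c^\infty(\R) }\) with \({ \chi \equiv 1 }\) near the origin and \({ \support\chi \subset (-1, 1) }\), and split \({ \abs{\xi}^{-\alpha} = \chi(\xi)\abs{\xi}^{-\alpha} + (1-\chi(\xi))\abs{\xi}^{-\alpha} }\). For \({ \alpha \in (0,1) }\) the low-frequency piece is an \({ \textnormal{L}^1(\R) }\)-function of compact support, so by Paley--Wiener its inverse Fourier transform \({ A }\) is entire on \({ \C }\) and \textit{a fortiori} \({ \textnormal{C}^\infty }\) on \({ \R }\). Consequently \({ B(x) \coloneqq \gamma_\alpha \abs{x}^{\alpha-1} - A(x) }\) is a tempered distribution with Fourier transform \({ (1 - \chi(\xi))\abs{\xi}^{-\alpha} }\), which agrees with \({ \abs{k}^{-\alpha} }\) at every nonzero integer \({ k }\) and vanishes at \({ k = 0 }\).

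Poisson summation applied to \({ B }\) then formally gives
\[
K_\alpha(x) = \sum_{n \in \Z} B(x + 2\uppi n) = \gamma_\alpha \abs{x}^{\alpha-1} - A(x) + \sum_{n \neq 0} B(x + 2\uppi n)
\]
on \({ (-\uppi, \uppi) }\), identifying \({ K_{\alpha, \textnormal{reg}}(x) = -A(x) + \sum_{n \neq 0} B(x + 2\uppi n) }\) as the sought regular part. The main technical obstacle is the slow decay \({ B(x) \sim \gamma_\alpha \abs{x}^{\alpha-1} }\) as \({ \abs{x} \to \infty }\) (since \({ A \in \textnormal{C}_0(\R) }\) by Riemann--Lebesgue but decays at no fixed polynomial rate), which precludes absolute convergence of the periodisation. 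I would handle this by a secondary multiplicative regularisation: multiplying the high-frequency symbol \({ (1 - \chi(\xi))\abs{\xi}^{-\alpha} }\) by \({ \chi(\delta\xi) }\) produces a compactly supported \({ \textnormal{L}^1 }\)-symbol whose inverse Fourier transform is Schwartz, so classical Poisson summation applies; one then passes to the limit \({ \delta \searrow 0 }\) in \({ \mathcal{D}'((-\uppi, \uppi)) }\). Smoothness and evenness of \({ K_{\alpha, \textnormal{reg}} }\) on \({ (-\uppi, \uppi) }\) follow since every term in the tail series is smooth there (the singularity of \({ B }\) sits at \({ x = -2\uppi n }\), a positive distance from \({ (-\uppi, \uppi) }\)), and term-wise differentiation is justified by integration by parts in frequency, which gives arbitrary polynomial decay of the derivatives of \({ B }\).

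As an alternative that also produces the explicit integral representation announced in the introduction (valid for all \({ \alpha > 0 }\)), one inserts the Mellin identity \({ \abs{k}^{-\alpha} = \Gamma(\alpha)^{-1} \int_0^\infty t^{\alpha-1} \textnormal{e}^{-t\abs{k}} \dee t }\) into the series for \({ K_\alpha }\) and sums via the closed-form Poisson-kernel identity \({ \sum_{k \in \Z} \textnormal{e}^{-t\abs{k}} \textnormal{e}^{\textnormal{i} k x} = \sinh t / (\cosh t - \cos x) }\) to obtain
\[
K_\alpha(x) = \frac{1}{2\uppi\, \Gamma(\alpha)} \int_0^\infty t^{\alpha-1}\!\left(\frac{\sinh t}{\cosh t - \cos x} - 1\right) \dee t.
\]
The rescaling \({ t = \abs{x} s }\) on the singular portion \({ t \in (0, 1] }\) extracts the leading factor \({ \abs{x}^{\alpha-1} }\) with the correct coefficient \({ \gamma_\alpha }\) via the classical Beta integral \({ \int_0^\infty s^\alpha/(1+s^2) \dee s = \uppi/(2\cos(\uppi\alpha/2)) }\), whilst the remainder is manifestly smooth and even on \({ (-\uppi, \uppi) }\). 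Membership \({ K_\alpha \in \textnormal{L}^1(\T) }\) is then immediate from the decomposition, as \({ \abs{x}^{\alpha-1} \in \textnormal{L}^1(-\uppi, \uppi) }\) for \({ \alpha > 0 }\) and \({ K_{\alpha, \textnormal{reg}} }\) is continuous on the compact interval.
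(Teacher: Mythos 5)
Your first approach is the paper's, but it contains a genuine error that your proposed workaround is then forced to clean up. You claim \({ B(x) \sim \gamma_\alpha \abs{ x }^{ \alpha - 1 } }\) as \({ \abs{ x } \to \infty }\), reasoning that \({ A \in \textnormal{C}_0(\R) }\) by Riemann--Lebesgue decays at no fixed polynomial rate and therefore cannot cancel the power. This is backwards. Since \({ \widehat{ A } = \chi \abs{  }^{ - \alpha} }\) has exactly the same singularity at \({ \xi = 0 }\) as \({ \abs{  }^{ - \alpha} }\) itself, \({ A }\) decays at exactly the rate \({ \gamma_\alpha \abs{ x }^{ \alpha - 1} }\), and the subtraction in \({ B = \gamma_\alpha \abs{  }^{ \alpha - 1 } - A }\) cancels this to all orders. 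The clean way to see this---and the one the paper uses---is to work on the frequency side: \({ \widehat{ B } = (1 - \chi) \abs{  }^{ - \alpha } }\) vanishes in a neighbourhood of \({ \xi = 0 }\), so \({ \widehat{ B }^{(m)} \in \textnormal{L}^1(\R) }\) for every \({ m \geq 1 }\) (the derivatives are smooth, compactly supported in the transition region, and decay like \({ \abs{ \xi }^{ -\alpha - m} }\) at infinity), hence \({ x^m B(x) }\) is bounded for every \({ m }\), i.e.\@ \({ B }\) has rapid decay. In fact you invoke exactly this reasoning a few lines later when you justify termwise differentiation by \enquote{integration by parts in frequency, which gives arbitrary polynomial decay of the derivatives of \({ B }\)}; the same argument at order zero already gives the decay of \({ B }\) itself, contradicting the slow-decay claim. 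With rapid decay in hand, \({ B \in \textnormal{L}^1(\R) }\), the periodisation \({ \sum_{n \neq 0} B(\cdot + 2\uppi n) }\) converges absolutely and uniformly on \({ (-\uppi, \uppi) }\) together with all derivatives, and classical Poisson summation applies at once; no secondary \({ \chi(\delta \xi) }\) regularisation is needed.

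The \({ \delta }\)-regularisation route you sketch could in principle be repaired, but as written it is underspecified: you would need to show \({ B_\delta \to B }\) locally uniformly (which does not follow from \({ \textnormal{L}^1 }\) convergence on the Fourier side, since \({ \chi(\delta \cdot) \widehat{ B } - \widehat{ B } }\) is supported where \({ \abs{ \xi } \gtrsim \delta^{-1} }\) and only decays like \({ \abs{ \xi }^{- \alpha} }\)), and to control the tail \({ \sum_{n \neq 0} B_\delta( \cdot + 2 \uppi n) }\) uniformly in \({ \delta }\)---which again requires knowing \({ B }\) decays rapidly. The detour thus does not avoid the lemma it was meant to replace. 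Your second route via the Mellin/Abel representation is correct in outline and recovers the explicit formula of \cref{thm:K-explicit}, but the paper uses that formula only to prove monotonicity, not to derive the decomposition; extracting \({ \gamma_\alpha \abs{ x }^{ \alpha - 1 } }\) by the rescaling \({ t = \abs{ x } s }\) is plausible, though showing the remainder is smooth would require more care than \enquote{manifestly}.
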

\begin{proof}
Let \({ \varrho }\) be an even, smooth cut-off function that vanishes in a neighbourhood of~\({ \xi = 0 }\) and equals~\({ 1 }\) for~\({ \abs{ \xi } \geq 1 }\), and define \({ F(\xi) \coloneqq \varrho(\xi) \abs{ \xi }^{ - \alpha }  }\) for \({ \xi \in \R }\). Then \({ F }\) is the Fourier transform of an integrable function of the form
\begin{equation*}
f(x) \coloneqq \gamma_\alpha \abs{ x }^{ \alpha - 1 } + f_{ \textnormal{reg}}(x),
\end{equation*}
where \({ f_{ \textnormal{reg}} \in \textnormal{C}^\infty(\R) }\) and \({ \abs{ f(x) } = \mathcal{O}( \abs{ x }^{ -m } )  }\) as \({ \abs{ x } \to \infty }\) for all~\({ m \geq 1 }\). Indeed, writing
\begin{equation*}
F = \abs{  }^{ - \alpha } + (\varrho - 1) \abs{  }^{ - \alpha },
\end{equation*}
it follows directly from~\eqref{eq:kernel-on-R} that \({ f = \mathscr{F}^{-1}F }\) has the given form, where we remember that the inverse Fourier transform of an integrable function of bounded support (in this case \({ (\varrho - 1) \abs{  }^{ - \alpha } }\)) is smooth. Since \({ \mathscr{F}( x^m f(x) ) \sim F^{(m)} }\) is integrable for all~\({ m \geq 1 }\), it must be the case that \({ x^m f(x) }\) is bounded. In particular, \({ f \in \textnormal{L}^1(\R) }\), and the Poisson summation formula then gives
\begin{equation*}
\sum_{ k \in \Z } f(x - 2\uppi k) = \tfrac{ 1 }{ 2\uppi } \sum_{ k \in \Z } F (k) \, \textnormal{e}^{ \textnormal{i}k x} = \tfrac{ 1 }{ 2 \uppi } \sum_{ k \neq 0 } \abs{ k }^{ - \alpha } \textnormal{e}^{ \textnormal{i}k x } = K_\alpha(x).
\end{equation*}
Since
\begin{equation*}
\sum_{ k \in \Z } f(x - 2\uppi k) = f(x) + \sum_{ k \neq 0 } f(x - 2\uppi k),
\end{equation*}
this proves the result with \({ K_{ \alpha, \textnormal{reg}} \coloneqq f_{ \textnormal{reg}} +  \sum\limits_{ k \neq 0 } f( \cdot - 2\uppi k) }\).
\end{proof}

\begin{figure}[h!]
	\vspace*{-.5em}%
    \centering%
    \begin{subfigure}[b]{0.49\textwidth}%
    \centering%
\setlength\figureheight{.6\textwidth}%
\setlength\figurewidth{\textwidth}%
\input{tikz/kernels-real.tikz}%
\vspace*{-1em}%
        \caption{Kernels on~\({ \R }\).}
    \end{subfigure}
    ~ %
    \begin{subfigure}[b]{0.49\textwidth}%
\centering%
\setlength\figureheight{.6\textwidth}%
\setlength\figurewidth{\textwidth}%
\input{tikz/kernels-periodic.tikz}%
\vspace*{-1em}%
        \caption{Kernels on~\({ \T }\).}
        \label{fig:kernels-torus}
    \end{subfigure}
    \caption{Illustrating the differences between the singular kernels on~\({ \R }\) (cut at \({ x = \pm 1 }\)) and~\({ \T }\) for various~\({ \alpha }\). (The vertical axes also have different scaling.) We compute the kernels on~\({ \T }\) by numerically integrating the formula in \cref{thm:K-explicit}.}
    \label{fig:kernels}
\end{figure}
In \cref{fig:kernels} we display the integral kernels on both \({ \R }\) and \({ \T }\) for various values of~\({ \alpha }\). Whereas the kernels on~\({ \R }\) are all nonnegative, those on~\({ \T }\) become negative away from the positive singularity at~\({ 0 }\), because \({ K_\alpha }\) has zero mean. In both cases the profiles are monotone on either side of the singularity; this is obvious on~\({ \R }\), and on~\({ \T }\) we deduce this by means of the following integral representation of~\({ K_\alpha }\), which is valid for all~\({ \alpha \in (0, \infty) }\). Although the formula is known~\autocite[Section~5.4.3]{PruBryMar1986a}, we include a slick computation of it using the gamma distribution. We remark that we shall only need the monotonicity of \({ K_\alpha }\) in our work, and for that property one may alternatively use the theory of completely monotone sequences and the discrete analogue of Bernstein's theorem; see~\autocite[Theorem~3.6~b)]{BruDha2021a}.

\begin{proposition}\label{thm:K-explicit}
For all \({ \alpha \in (0, \infty) }\) the periodic kernel has the integral representation
\begin{equation*}
K_\alpha(x) = \frac{ 1 }{ \uppi \, \Upgamma( \alpha) } \int_{ 0 }^{ \infty } \frac{ t^{ \alpha - 1} ( \textnormal{e}^{ t } \cos x - 1) }{ 1 - 2 \textnormal{e}^{ t } \cos x + \textnormal{e}^{ 2t }  } \dee t
\end{equation*}
for \({ x \notin 2\uppi \Z }\). In particular, \({ K_\alpha }\) is strictly increasing on~\({ (-\uppi, 0)  }\).
\end{proposition}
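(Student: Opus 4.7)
The starting point is the standard gamma-function identity
\[
k^{-\alpha} = \frac{1}{\Upgamma(\alpha)} \int_0^\infty t^{\alpha - 1} \textnormal{e}^{-kt} \dee t \qquad (k \geq 1, \; \alpha > 0),
\]
combined with the observation that, since \({ K_\alpha }\) is real and even,
\[
K_\alpha(x) = \tfrac{1}{\uppi} \sum_{k=1}^\infty k^{-\alpha} \cos(kx).
\]
Substituting and interchanging sum and integral (formally) yields
\[
K_\alpha(x) = \tfrac{1}{\uppi\,\Upgamma(\alpha)} \int_0^\infty t^{\alpha - 1} \Biggl( \sum_{k=1}^\infty \textnormal{e}^{-kt} \cos(kx) \Biggr) \dee t,
\]
and summing the geometric series gives
\[
\sum_{k=1}^\infty \textnormal{e}^{-kt} \cos(kx) = \Re \frac{\textnormal{e}^{-t + \textnormal{i}x}}{1 - \textnormal{e}^{-t + \textnormal{i} x}} = \frac{\textnormal{e}^t \cos x - 1}{\textnormal{e}^{2t} - 2 \textnormal{e}^t \cos x + 1},
\]
after multiplying numerator and denominator by \({ \textnormal{e}^{2t} }\). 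This is exactly the claimed integrand.

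The main obstacle is that for \({ \alpha \in (0, 1) }\) the interchange is \emph{not} justified by Fubini, since the dominated bound \({ t^{\alpha - 1} \textnormal{e}^{-t} / (1 - \textnormal{e}^{-t}) }\) behaves like \({ t^{\alpha - 2} }\) as \({ t \to 0 }\) and is not integrable. To get around this, the plan is to introduce an Abel regulariser: for \({ r \in (0, 1) }\), consider
\[
K_\alpha^r(x) \coloneqq \tfrac{1}{\uppi} \sum_{k=1}^\infty r^k k^{-\alpha} \cos(kx).
\]
For each \({ r < 1 }\) the sum \({ \sum_k r^k \textnormal{e}^{-kt} }\) converges absolutely, so Fubini is legitimate and yields the analogous integral formula with numerator \({ r\textnormal{e}^t \cos x - r^2 }\) and denominator \({ \textnormal{e}^{2t} - 2 r \textnormal{e}^t \cos x + r^2 }\). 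On the left-hand side, Abel's theorem gives \({ K_\alpha^r(x) \to K_\alpha(x) }\) as \({ r \to 1^- }\) for every \({ x \notin 2 \uppi \Z }\) (using the bounded partial sums of \({ \cos(kx) }\) via Dirichlet's test, exactly as in \cref{thm:periodic-kernel}); on the right-hand side, for \({ x \notin 2\uppi\Z }\) the denominator stays bounded below by \({ \abs{ \textnormal{e}^t - \textnormal{e}^{ \textnormal{i} x } }^2 \geq \sin^2 x }\) uniformly in \({ r \in (0, 1] }\), so dominated convergence (with the dominant \({ C_x t^{\alpha - 1} \min(1, \textnormal{e}^{-t}) }\)) delivers the limit integrand. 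This produces the desired representation.

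For the monotonicity assertion, note that the integrand depends on \({ x }\) only through \({ c = \cos x }\). A direct differentiation in \({ c }\) gives
\[
\frac{\partial}{\partial c} \frac{ \textnormal{e}^t c - 1 }{ \textnormal{e}^{2t} - 2 \textnormal{e}^t c + 1 } = \frac{ \textnormal{e}^t ( \textnormal{e}^{2t} - 1) }{ ( \textnormal{e}^{2t} - 2 \textnormal{e}^t c + 1 )^2 } > 0
\]
for every \({ t > 0 }\). Since \({ t^{\alpha - 1} > 0 }\) and \({ c = \cos x }\) is strictly increasing on \({ (-\uppi, 0) }\), it follows by differentiation under the integral sign (justified by the same uniform lower bound on the denominator on compact subsets of \({ (-\uppi, 0) }\)) that \({ K_\alpha }\) is strictly increasing there. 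I expect the routine but delicate step to be the rigorous justification of the Abel limit; the algebraic computations of the geometric sum and of \({ \partial_c }\) are straightforward.
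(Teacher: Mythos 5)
Your approach is essentially the same as the paper's: both start from the cosine series for \({ K_\alpha }\), insert the gamma integral for \({ k^{-\alpha} }\), interchange sum and integral, sum the geometric series, and finally differentiate under the integral sign. Your monotonicity argument is the paper's argument phrased via the chain rule through \({ c = \cos x }\); the resulting integrand for the derivative is identical.

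The only genuine difference is in how the interchange of sum and integral is justified. You are right that a termwise absolute bound fails: \({ \sum_k t^{\alpha-1}\textnormal{e}^{-kt} = t^{\alpha-1}\textnormal{e}^{-t}/(1-\textnormal{e}^{-t}) \sim t^{\alpha - 2} }\) as \({ t \to 0 }\), which is not integrable for \({ \alpha < 1 }\), so Tonelli is unavailable. But you then conclude that one must regularize. In fact the direct dominated-convergence argument the paper invokes works without regularization: the \emph{partial} sums
\begin{equation*}
\sum_{k=1}^N t^{\alpha - 1}\textnormal{e}^{-kt}\cos kx = t^{\alpha - 1}\Re\biggl(\frac{\textnormal{e}^{-t+\textnormal{i}x}\bigl(1 - \textnormal{e}^{-N(t - \textnormal{i}x)}\bigr)}{1 - \textnormal{e}^{-t+\textnormal{i}x}}\biggr)
\end{equation*}
are bounded in modulus by \({ 2 t^{\alpha-1}\textnormal{e}^{-t}\big/\abs{1 - \textnormal{e}^{-t+\textnormal{i}x}} }\), and for fixed \({ x \notin 2\uppi\Z }\) the quantity \({ \abs{1 - \textnormal{e}^{-t+\textnormal{i}x}}^2 = (\textnormal{e}^{-t} - \cos x)^2 + \sin^2 x }\) is bounded below uniformly in \({ t \in (0, \infty) }\) by a positive constant (equal to \({ \sin^2 x }\) when \({ \cos x > 0 }\), and at least \({ 1 }\) when \({ \cos x \leq 0 }\)). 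Hence the partial sums are dominated by a constant multiple of \({ t^{\alpha - 1}\textnormal{e}^{-t} }\), which is integrable for every \({ \alpha > 0 }\), and dominated convergence applies directly. Your Abel-regularization route is a valid alternative, but it buys nothing here. Incidentally, your stated lower bound \({ \sin^2 x }\) on the regularized denominator is vacuous at \({ x = \uppi }\) (where \({ \sin^2 x = 0 }\) while the denominator is in fact at least \({ (1+r)^2 \geq 1 }\)); the bound should be stated via the case split above, or simply as a positive constant depending on \({ x }\).
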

\begin{proof}
By recognising \({ k^{ - \alpha} }\) in the definition of the gamma distribution with shape~\({ \alpha }\) and rate~\({ k }\), whose probability density function is \({ t \mapsto k^{ \alpha} t^{ \alpha - 1} \textnormal{e}^{ -kt }\!/ \Upgamma( \alpha) }\) on~\({ (0, \infty) }\), we find that
\pagebreak 
\begin{align*}
\uppi \, \Upgamma( \alpha) \, K_\alpha(x) &= \Upgamma( \alpha) \sum_{ k = 1 }^{ \infty } k^{- \alpha} \cos (kx) \\[1ex]
&=  \sum_{ k = 1 }^{ \infty } \int_{ 0 }^{ \mathrlap{\infty} } t^{ \alpha - 1} \textnormal{e}^{-kt} \cos (kx) \dee t \\[1ex]
&= \int_{ 0 }^{ \mathrlap{\infty} } t^{ \alpha - 1} \Re \Biggl( \sum_{ k = 1 }^{ \infty } \left(\textnormal{e}^{ -t } \textnormal{e}^{ \textnormal{i} x } \right)^k \Biggr) \dee t \\[1ex]
&= \int_{ 0 }^{ \mathrlap{\infty} } t^{ \alpha - 1} \Re \biggl( \frac{ \textnormal{e}^{ \textnormal{i}x }  }{ \textnormal{e}^{ t } - \textnormal{e}^{ \textnormal{i}x }  } \biggr) \dee t = \int_{ 0 }^{ \infty } \frac{ t^{ \alpha - 1} ( \textnormal{e}^{ t } \cos x - 1) }{ 1 - 2 \textnormal{e}^{ t } \cos x + \textnormal{e}^{ 2t }  } \dee t
\end{align*}
using the dominated convergence theorem and a trick with geometric series. Leibniz's integral rule next yields that
\begin{equation*}
K_\alpha'(x) = -\frac{ \sin x }{ \uppi \, \Upgamma( \alpha) } \int_{ 0 }^{ \mathrlap{\infty} } \frac{ t^{ \alpha - 1} \textnormal{e}^{ t } ( \textnormal{e}^{ 2t }  - 1) }{ (1 - 2 \textnormal{e}^{ t } \cos x + \textnormal{e}^{ 2t })^2 } \dee t,
\end{equation*}
which shows that \({ K_\alpha }\) is strictly increasing on~\({ (-\uppi, 0)  }\).
\end{proof}

\vspace*{-.5em} 

\begin{remark}
For \({ \alpha = 1 }\) in \cref{thm:K-explicit} one finds the explicit form
\begin{align*}
K_1(x) &= \tfrac{ 1 }{ \uppi } \Bigl[\tfrac{ 1 }{ 2 } \log \left( 1 - 2 \textnormal{e}^{ t } \cos x + \textnormal{e}^{ 2t } \right) - t \Bigr]_{t = 0}^\infty \\[1ex]
&= - \tfrac{ 1 }{ 2 \uppi } \log \bigl( 2(1 - \cos x) \bigr) \\[1ex]
&= - \tfrac{ 1 }{ \uppi } \log \abs{ x } + \mathcal{O}(x^2)
\end{align*}
as~\({ x \to 0 }\). Such logarithmic singularities occur for all the kernels in \cref{tab:overview} when \({ \alpha = 1 }\); see for instance~\autocite[Lemma~2.3~(iii)]{EhrJohCla2019a} for the kernel in the bidirectional Whitham equation with inhomogeneous dispersion~\({ \tanh(\textnormal{D})/ \textnormal{D} }\).
\end{remark}

\vspace*{-1.5em} 

\subsection{Action of \({ \abs{ \textnormal{D} }^{- \alpha} }\) on Hölder--Zygmund spaces}

As regards the functional-analytic framework, it is desirable to work with spaces which both capture the precise regularity of cusps and interact well with Fourier multipliers. These turn out to be the so-called Hölder--Zygmund spaces, which we explain next.

Let \({ \textnormal{C}^m(\T) }\), for \({ m = 0, 1, \dotsc }\), denote the space of \({ m }\)~times continuously differentiable functions~\({ \varphi }\) on~\({ \T }\) with norm
\begin{equation*}
\norm{ \varphi }_{ \textnormal{C}^{m}(\T) } \coloneqq \norm{ \varphi }_{ \infty } + \norm{ \varphi^{(m)} }_{ \infty },
\end{equation*}
where \({ \norm{  }_{ \infty }  }\) is the supremum norm on~\({ \T }\). Furthermore, define \({ \textnormal{C}^{m, \beta}(\T) }\) with \({ \beta \in (0, 1] }\) to be the class of Hölder spaces consisting of all \({ \varphi \in \textnormal{C}^m(\T) }\) for which \({ \varphi^{(m)} }\) is \({ \beta }\)-Hölder continuous with norm
\begin{equation*}
\norm{ \varphi }_{ \textnormal{C}^{m, \beta}(\T) } \coloneqq \norm{ \varphi }_{ \textnormal{C}^{m}(\T) } + \seminorm{ \varphi^{(m)} }_{ \textnormal{C}^{ \beta}(\T) },
\end{equation*}
where
\begin{equation*}
\seminorm{ \psi }_{ \textnormal{C}^\beta(\T) } \coloneqq \sup_{ x \neq y } \frac{ \abs{ \psi(x) - \psi(y) } }{ \abs{ x - y }^{ \beta }  }
\end{equation*}
is a seminorm. We write \({ \textnormal{C}(\T) \coloneqq \textnormal{C}^0(\T) }\) and \({ \textnormal{C}^\beta(\T) \coloneqq \textnormal{C}^{0, \beta}(\T) }\) for simplicity, and note that \({ \textnormal{C}^{m, \beta}(\T) }\) is compactly embedded in \({ \textnormal{C}^{m, \widetilde{\beta}}(\T) }\) when \({ \beta > \widetilde{\beta}}\). Moreover, the Fourier series of \({ \varphi \in \textnormal{C}^{ \beta }(\T) }\) for \({ \beta > \frac{ 1 }{ 2 } }\) converges both uniformly to~\({ \varphi }\) and absolutely.

While the standard Hölder norms provide an accurate description of the modulus of continuity of a function (and its derivatives), an alternative, frequency-based characterisation by means of the Littlewood--Paley decomposition is more suitable for Fourier multipliers. To this end, let \({  \sum_{ j = 0 }^\infty \varrho_j(\xi) = 1 }\) be a partition of unity of smooth functions \({ \varrho_j }\) on~\({ \R }\) supported on \({ 2^j \leq \abs{ \xi } \leq 2^{j + 1} }\) for \({ j \geq 1 }\) and on \({ \abs{ \xi } \leq 2 }\) for~\({ j = 0 }\). We then define the Hölder--Zygmund space~\({ \textnormal{C}_\ast^s(\T) }\) for \({ s \in [0, \infty) }\) to consist of those functions~\({ \varphi }\) for which
\begin{equation*}
\norm{ \varphi }_{ \textnormal{C}_\ast^s(\T) } \coloneqq \sup_{ j \geq 0 } 2^{js} \norm{ \varrho_j( \textnormal{D}) \varphi  }_{ \infty }
\end{equation*}
is finite, where \({ \varrho_j(\textnormal{D}) }\) is the Fourier multiplier with symbol~\({ \varrho_j }\), that is,
\begin{equation*}
\smash[b]{\varrho_j(\textnormal{D}) \varphi(x) \coloneqq \sum_{ k \in \Z } \varrho_j(k) \, \widehat{ \varphi }(k) \, \textnormal{e}^{ \textnormal{i}k x}.}
\end{equation*}
One has that\vspace*{-.3em} 
\begin{align*} \SwapAboveDisplaySkip
\textnormal{C}_\ast^s(\T) = \textnormal{C}^{ \lfloor s \rfloor, s - \lfloor s \rfloor}(\T) \quad &\textnormal{for } s \neq 1, 2, \dotsc,
\intertext{in the sense of equivalent norms, whereas there are strict inclusions}
\textnormal{C}^{s}(\T) \subsetneq \textnormal{C}^{s -1, 1}(\T) \subsetneq \textnormal{C}_\ast^s(\T) \quad &\textnormal{for } s = 1, 2, \dotsc
\end{align*}

Since \({ \abs{ \textnormal{D} }^{- \alpha} }\) is homogeneous, we restrict from now on to the corresponding subspaces \({ \mathring{ \textnormal{C}}^{m}(\T) }\), \({ \mathring{ \textnormal{C}}^{m, \beta}(\T) }\), and \({ \mathring{ \textnormal{C}}_\ast^{s}(\T) }\) of functions with zero mean in the above spaces, with identical norms. Note that the seminorm \({ \seminorm{}_{ \mathring{ \textnormal{C} }^\beta(\T)} }\) is now a norm equivalent to \({ \norm{  }_{  \mathring{ \textnormal{C}}^{0, \beta}(\T) }  }\) by the zero-mean restriction and compactness of~\({ \T }\). We observe in this setting that
\begin{equation*}
\abs{ \textnormal{D} }^{- \alpha} \colon \mathring{\textnormal{C}}_\ast^s(\T) \to \mathring{\textnormal{C}}_\ast^{s + \alpha}(\T)
\end{equation*}
is \({ \alpha }\)-smoothing for all \({ s \in [0, \infty) }\) (and therefore also on the Hölder-space scale for \({ s \neq 1, 2, \dotsc }\)), because in terms of Fourier multipliers we have, with \({ j \geq 1 }\), that
\begin{equation*}
\mathscr{F} \left( \varrho_j(\textnormal{D}) \abs{ \textnormal{D} }^{- \alpha} \varphi \right) = \varrho_j \abs{  }^{ - \alpha } \widehat{ \varphi } \sim 2^{-j \alpha} \varrho_j \widehat{ \varphi } = 2^{-j \alpha} \widehat{ \varrho_j(\textnormal{D}) \varphi }.
\end{equation*}

Finally, the subscript \enquote{even} attached to any space on~\({ \T }\) means the subspace of symmetric functions about~\({ 0 \pmod{2 \uppi} }\).

\begin{lemma} \label{thm:local-smoothing}
The smoothing property \({ \abs{ \textnormal{D} }^{- \alpha} \colon \mathring{\textnormal{C}}_\ast^s(\T) \to \mathring{\textnormal{C}}_\ast^{s + \alpha}(\T) }\) extends to a local version. More precisely, if \({ \varphi \in \mathring{ \textnormal{C}}(\T) }\) lies in \({ \textnormal{C}_{\ast, \textnormal{loc}}^{s}(U) }\) for an open subset~\({ U \subset \T }\), in the sense that \({ \rho \varphi \in \textnormal{C}_\ast^s(\T) }\) for any compactly supported \({ \rho \in \textnormal{C}_{ \textnormal{c}}^\infty(U)  }\), then we still have \({ \abs{ \textnormal{D} }^{- \alpha} \varphi \in \textnormal{C}_{\ast, \textnormal{loc}}^{s + \alpha}(U) }\).
\end{lemma}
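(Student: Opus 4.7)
The natural approach is a cut-off decomposition which splits $\varphi$ into a piece carrying the regularity hypothesis and a piece whose convolution with~$K_\alpha$ is smooth on the region of interest.

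Fix any test function $\widetilde{\rho} \in \textnormal{C}_{\textnormal{c}}^\infty(U)$ and choose $\rho \in \textnormal{C}_{\textnormal{c}}^\infty(U)$ with $\rho \equiv 1$ on an open neighbourhood~$V$ of~$\support \widetilde{\rho}$. Since $K_\alpha \in \textnormal{L}^1(\T)$ by \cref{thm:periodic-kernel}, the convolution $K_\alpha \ast \psi$ is well defined for any $\psi \in \textnormal{L}^1(\T)$, and because $\widehat{K_\alpha}(0) = 0$ it agrees with $\abs{\textnormal{D}}^{-\alpha} \psi$ whenever $\psi$ has zero mean and is insensitive to adding constants to~$\psi$. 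Hence
\begin{equation*}
\abs{\textnormal{D}}^{-\alpha} \varphi \;=\; K_\alpha \ast (\rho \varphi) \;+\; K_\alpha \ast ((1-\rho)\varphi).
\end{equation*}

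For the first term, $\rho \varphi \in \textnormal{C}_\ast^s(\T)$ by hypothesis, so after subtracting its mean and applying the global smoothing property of~$\abs{\textnormal{D}}^{-\alpha}$ on $\mathring{\textnormal{C}}_\ast^s(\T)$, we deduce $K_\alpha \ast (\rho \varphi) \in \textnormal{C}_\ast^{s + \alpha}(\T)$. Multiplication by the smooth cut-off $\widetilde{\rho}$ preserves $\textnormal{C}_\ast^{s + \alpha}(\T)$ (a standard multiplier property on Hölder--Zygmund spaces), so the first piece satisfies the desired local bound. For the second term, write
\begin{equation*}
K_\alpha \ast ((1-\rho)\varphi)(x) = \int_{\T} K_\alpha(x-y)\, (1-\rho(y))\, \varphi(y) \dee y,
\end{equation*}
and observe that the integrand is supported where $y \in \T \setminus V$. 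For $x$ in a neighbourhood of $\support \widetilde{\rho}$ this forces $x - y$ to stay in a compact subset of $\T \setminus 2\uppi\Z$ uniformly in~$y$. By \cref{thm:periodic-kernel}, $K_\alpha$ is smooth on $\T \setminus 2\uppi\Z$, so differentiating under the integral sign any number of times is legitimate and shows that $K_\alpha \ast ((1-\rho)\varphi)$ is $\textnormal{C}^\infty$ on a neighbourhood of $\support \widetilde{\rho}$. Multiplying by~$\widetilde{\rho}$ then yields an element of $\textnormal{C}^\infty(\T) \subset \textnormal{C}_\ast^{s + \alpha}(\T)$, and combining the two contributions gives $\widetilde{\rho} \cdot \abs{\textnormal{D}}^{-\alpha} \varphi \in \textnormal{C}_\ast^{s + \alpha}(\T)$ as desired.

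The main obstacle is the second piece: one must use in tandem the explicit decomposition $K_\alpha = \gamma_\alpha \abs{\,}^{\alpha - 1} + K_{\alpha,\textnormal{reg}}$ from \cref{thm:periodic-kernel} and the support separation afforded by the cut-off to justify that $K_\alpha \ast ((1-\rho)\varphi)$ is genuinely smooth in the relevant region, rather than merely as regular as~$\varphi$. Everything else is bookkeeping: the first piece is immediate from the global smoothing, and the multiplication-by-$\widetilde{\rho}$ step is standard on Hölder--Zygmund scales.
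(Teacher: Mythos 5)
Your proof is correct and follows essentially the same route as the paper: introduce a second cut-off equal to~\({ 1 }\) on the support of the first, split \({ \varphi }\) accordingly, apply global \({ \textnormal{C}_\ast^{s+\alpha} }\)-smoothing to the compactly supported piece, and observe that the remaining convolution is smooth near \({ \support \widetilde{\rho} }\) because the supports of \({ \widetilde{\rho} }\) and \({ (1-\rho)\varphi }\) are separated and \({ K_\alpha }\) is smooth away from \({ 2\uppi\Z }\). The only additions beyond the paper's argument are the (correct but minor) remarks about mean-subtraction and the Hölder--Zygmund multiplication property, which the paper leaves implicit.
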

\vspace*{-.8em} 
\enlargethispage{1.2\baselineskip} 
\begin{proof}
To see this, let \({ \rho \in \textnormal{C}_{ \textnormal{c}}^\infty(U) }\) and let \({ \eta \in \textnormal{C}_{ \textnormal{c}}^\infty(U) }\) satisfy \({ \eta = 1 }\) in a neighbourhood \({ V \Subset U }\) of~\({ \support \rho }\). Then
\begin{equation*}
\rho \abs{ \textnormal{D} }^{- \alpha} \varphi = \rho \abs{ \textnormal{D} }^{- \alpha} (\eta \varphi) + \rho \abs{ \textnormal{D} }^{- \alpha} \bigl( (1 - \eta) \varphi \bigr),
\end{equation*}
and the first term on the right-hand side is globally \({ \textnormal{C}_\ast^{s + \alpha} }\) regular. Moreover, since the integrand in
\begin{equation*}
\rho(x) \abs{ \textnormal{D} }^{- \alpha} \bigl( (1 - \eta) \varphi \bigr)(x) = \int_{ -\uppi }^{ \uppi } K_\alpha(x - y) \rho(x) (1 - \eta(y)) \varphi(y) \dee y
\end{equation*}
vanishes for \({ y }\) near~\({ x }\) and \({ K_\alpha }\) is smooth away from~\({ 0 }\), it follows that \({ \rho  \abs{ \textnormal{D} }^{- \alpha} \bigl( (1 - \eta) \varphi \bigr) }\) is smooth. Hence, \({ \abs{ \textnormal{D} }^{- \alpha} \varphi \in \textnormal{C}_{\ast, \textnormal{loc}}^{s + \alpha}(U)  }\), as claimed.
\end{proof}

Finally, we include a monotonicity property of~\({ \abs{ \textnormal{D} }^{- \alpha} }\) which will be useful in establishing a priori nodal properties of highest waves in \cref{sec:regularity}.
\begin{proposition}[\phantom{}{\protect\autocite[Lemma~3.6]{EhrWah2019a}}] \label{thm:L-paritypreserving}
\({ \abs{ \textnormal{D} }^{- \alpha} }\) is a parity-preserving operator, and \({ \abs{ \textnormal{D} }^{- \alpha}f > 0 }\) on \({ (-\uppi, 0) }\) for odd \({ f \in \mathring{ \textnormal{C}}(\T) }\) satisfying \({ f \geq 0 }\) on~\({ (-\uppi, 0) }\) with \({ f(y_0) > 0 }\) for some~\({ y_0 \in (-\uppi, 0) }\).
\end{proposition}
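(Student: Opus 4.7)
The parity-preservation is immediate: since $\widehat{K_\alpha}(k) = |k|^{-\alpha}$ is even in $k$, the kernel $K_\alpha$ is even, and convolution with an even function commutes with the reflection $x \mapsto -x$.

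For the strict positivity, I would write $|\textnormal{D}|^{-\alpha} f = K_\alpha \ast f$ as an integral over $(-\pi, \pi)$, split at~$0$, and exploit the oddness of~$f$ via the substitution $y \mapsto -y$ on the positive half. This yields the symmetrised representation
\begin{equation*}
\abs{\textnormal{D}}^{-\alpha}f(x) \;=\; \int_{-\pi}^{0} \bigl[ K_\alpha(x-y) - K_\alpha(x+y) \bigr] f(y) \dee y
\end{equation*}
for $x \in (-\pi, 0)$, with the bracket well-defined as an $\textnormal{L}^1$ function of $y$ thanks to \cref{thm:periodic-kernel}.

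The heart of the argument is to verify that the bracket is strictly positive for almost every $y \in (-\pi, 0)$. Combining \cref{thm:K-explicit}, the evenness of~$K_\alpha$, and $2\pi$-periodicity, one sees that $K_\alpha(z)$ depends only on the torus distance $|z|_\T := \min_{k \in \Z} |z - 2\uppi k|$ and is strictly decreasing as a function of this distance on~$(0, \uppi]$. It therefore suffices to show that $|x - y|_\T < |x + y|_\T$ for all $x, y \in (-\uppi, 0)$. Here $x - y \in (-\uppi, \uppi)$, so $|x - y|_\T = |x - y|$, while $x + y \in (-2\uppi, 0)$ splits into two cases: if $x + y \in (-\uppi, 0)$ then $|x + y|_\T = -(x + y)$, and if $x + y \in (-2\uppi, -\uppi)$ then $|x + y|_\T = x + y + 2\uppi$. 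In each case an elementary verification (treating $x \lessgtr y$) reduces the desired inequality to $x, y > -\uppi$, which holds by assumption.

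With the bracket being nonnegative on $(-\uppi, 0)$ and strictly positive for $y \neq x$, the integral is nonnegative. To conclude strict positivity at any fixed $x \in (-\uppi, 0)$, use continuity of~$f$ to find a neighbourhood $U \subset (-\uppi, 0) \setminus \{x\}$ of $y_0$ on which $f > 0$; on this set the integrand is strictly positive and hence contributes a positive amount, giving $|\textnormal{D}|^{-\alpha}f(x) > 0$. The main subtlety, which I expect to be the trickiest bookkeeping, is the case split establishing $|x - y|_\T < |x + y|_\T$ on the torus, together with ensuring that the logarithmic-type integrable singularity of $K_\alpha$ at $y = x$ does not spoil the sign argument -- which it does not, precisely because $K_\alpha \in \textnormal{L}^1(\T)$ and the singular contribution is positive.
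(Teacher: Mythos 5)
Your proposal is correct and follows essentially the same route as the paper: even kernel gives parity preservation, symmetrisation reduces the integral to $(-\uppi,0)$, and strict positivity of the bracket $K_\alpha(x-y)-K_\alpha(x+y)$ follows from \cref{thm:K-explicit} by comparing distances (what you call torus distance, which the paper phrases as distance to $0$ and to $-2\uppi$). The only small imprecision is the summary ``reduces the desired inequality to $x,y>-\uppi$'': one of the two cases in fact reduces to $x,y<0$, but both bounds are available and your case analysis is sound.
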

\begin{proof}
Since \({ K_\alpha }\) is even, one immediately obtains that \({ \abs{ \textnormal{D} }^{- \alpha} }\) is parity-preserving from
\begin{equation*}
\abs{ \textnormal{D} }^{- \alpha}f(x) \pm \abs{ \textnormal{D} }^{- \alpha}f(-x) = \int_{ \T } K_\alpha(x - y) \left( f(y) \pm f(-y) \right) \dee y.
\end{equation*}
Next consider odd \({ f \in \mathring{ \textnormal{C}}(\T) }\) satisfying \({ f(y) \geq 0 }\) on~\({ (- \uppi, 0) }\) with \({ f(y_0) > 0 }\) for some~\({ y_0 \in (-\uppi, 0) }\). Then
\begin{equation*}
\abs{ \textnormal{D} }^{- \alpha}f(x) = \int_{ -\uppi }^{ \uppi } K_\alpha(x - y) f(y) \dee y = \int_{ - \uppi }^{ 0 } \left( K_\alpha(x - y) - K_\alpha(x + y) \right) f(y) \dee y
\end{equation*}
for \({ x \in (- \uppi, 0) }\). When \({ y }\) also lies in \({ (- \uppi, 0) }\), it follows that \({ - 2 \uppi < x + y < x -y < \uppi }\) and
\begin{equation*}
\distance{x - y}{0} < \min \Set[\big]{ \distance{x + y}{0}, \distance{x + y}{-2 \uppi} }.
\end{equation*}
The latter inequality is a consequence of \({ \abs{ x - y } < \abs{ x + y } }\) for \({ x,y < 0 }\) and \({ \abs{ x - y } < x + y + 2 \uppi }\) for \({ x, y > - \uppi}\). Since \({ K_\alpha }\) is strictly decreasing as a function of the distance from the origin to~\({ \pm \uppi }\) by \cref{thm:K-explicit} and is even and periodic, we therefore obtain that
\begin{equation*}
K_\alpha(x - y) > K_\alpha(x + y)
\end{equation*}
for every \({ y \in (- \uppi, 0) \setminus \Set{ x } }\). In particular, \({ \abs{ \textnormal{D} }^{- \alpha}f(x) > 0 }\) for all \({ x \in (- \uppi, 0) }\) as \({ f }\) is strictly positive in an interval around~\({ y_0 }\) by continuity.
\end{proof}

\section{A priori properties of travelling-wave solutions} \label{sec:regularity}

In this section we establish many a priori bounds and regularity properties of continuous solutions of~\eqref{eq:steady}. Most importantly, we prove exact, global \({ \alpha }\)-Hölder regularity in \cref{thm:regularity} for solutions touching the highest point~\({ \mu }\) (see~\eqref{eq:highest-value}) from below at~\({ x = 0 }\). This is obtained with help of a nodal pattern of solutions in \cref{thm:nodal-pattern}. We remind the reader of~\eqref{eq:range}: \({ n'(\varphi) < c }\) corresponds to solutions which stay away from~\({ (\pm)\mu }\), and \({ n'(\varphi) \leq c }\) includes the possibility of also touching~\({ (\pm) \mu }\).

Our first result is a uniform upper bound on both the size of solutions and the wave speed that we will use in \cref{sec:bifurcation} in compactness arguments.
\begin{lemma} \label{thm:bound-wavespeed}
For all solutions \({ \varphi \in \mathring{\textnormal{C}}(\T) }\) of~\eqref{eq:steady} satisfying \({ n'(\varphi) \leq c }\) one has the
uniform estimate
\begin{equation*}
\norm{ \varphi }_{ \infty } \lesssim (1 + c)^{1/(p - 1)}.
\end{equation*}
Moreover, if \({ c \geq \frac{ p }{ p - 1 } \norm{ K_\alpha }_{ \textnormal{L}^1(\T) } }\), then there are no nontrivial such solutions.
\end{lemma}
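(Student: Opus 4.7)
The plan is to test the steady equation pointwise at the extrema of $\varphi$ and combine this with the decomposition $K_\alpha = K_\alpha^+ - K_\alpha^-$ into positive and negative parts. Because $\widehat{K_\alpha}(0) = 0$ means $K_\alpha$ has zero mean, one has $\norm{K_\alpha^+}_{\textnormal{L}^1(\T)} = \norm{K_\alpha^-}_{\textnormal{L}^1(\T)} = \tfrac{1}{2}\norm{K_\alpha}_{\textnormal{L}^1(\T)}$. Writing $M \coloneqq \max \varphi$ and $m \coloneqq \min \varphi$, and using $\abs{\textnormal{D}}^{-\alpha}\varphi(x) = \int_{\T} K_\alpha(x - y)(\varphi(y) - \varphi(x))\dee y$ at a maximiser $x_0$ and a minimiser $x_1$, the pointwise inequalities $\varphi - M \leq 0 \leq \varphi - m$ together with the split of $K_\alpha$ yield the one-sided bounds
\begin{equation*}
\abs{\textnormal{D}}^{-\alpha}\varphi(x_0) \leq \tfrac{1}{2}\norm{K_\alpha}_{\textnormal{L}^1(\T)}(M-m), \qquad \abs{\textnormal{D}}^{-\alpha}\varphi(x_1) \geq -\tfrac{1}{2}\norm{K_\alpha}_{\textnormal{L}^1(\T)}(M-m).
\end{equation*}

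For the size estimate, case~\eqref{eq:n-sgn} is immediate: the condition $n'(\varphi) = p\abs{\varphi}^{p-1} \leq c$ forces $\abs{\varphi} \leq \mu$, whence $\norm{\varphi}_\infty \leq \mu \lesssim (1+c)^{1/(p-1)}$. In case~\eqref{eq:n-abs} the assumption only yields $M \leq \mu$, and one must control $\abs{m}$ separately. Evaluating~\eqref{eq:steady} at $x_0$ and exploiting $M^p - cM \leq 0$ (a consequence of $M^{p-1} \leq \mu^{p-1} = c/p$) together with the first bound above produces
\begin{equation*}
\textstyle\fint_\T \abs{\varphi}^p \leq \norm{K_\alpha}_{\textnormal{L}^1(\T)}\norm{\varphi}_\infty.
\end{equation*}
Evaluating~\eqref{eq:steady} at $x_1$ (with $m \leq 0$) and inserting this estimate yields $\abs{m}^p + c\abs{m} \leq 2\norm{K_\alpha}_{\textnormal{L}^1(\T)}\norm{\varphi}_\infty$. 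In the only relevant sub-case $\abs{m} \geq M$, so that $\norm{\varphi}_\infty = \abs{m}$, dividing by $\abs{m} > 0$ leaves $\abs{m}^{p-1} + c \leq 2\norm{K_\alpha}_{\textnormal{L}^1(\T)}$, whence $\abs{m} \leq (2\norm{K_\alpha}_{\textnormal{L}^1(\T)})^{1/(p-1)}$. Combined with $M \leq \mu$ this gives $\norm{\varphi}_\infty \lesssim (1+c)^{1/(p-1)}$.

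For the wavespeed threshold, subtracting~\eqref{eq:steady} at $x_1$ from~\eqref{eq:steady} at $x_0$ and using the two bounds on $\abs{\textnormal{D}}^{-\alpha}\varphi(x_0)$ and $\abs{\textnormal{D}}^{-\alpha}\varphi(x_1)$ produces
\begin{equation*}
(c - \norm{K_\alpha}_{\textnormal{L}^1(\T)})(M-m) \leq n(M) - n(m).
\end{equation*}
For a nontrivial zero-mean $\varphi$, both $M > 0$ and $m < 0$. In case~\eqref{eq:n-sgn}, $n(M) - n(m) = M^p + \abs{m}^p$, and the constraints $M^{p-1}, \abs{m}^{p-1} \leq \mu^{p-1} = c/p$ give $n(M) - n(m) \leq (c/p)(M + \abs{m}) = (c/p)(M-m)$; in case~\eqref{eq:n-abs}, $n(M) - n(m) = M^p - \abs{m}^p \leq M^p \leq (c/p)M \leq (c/p)(M-m)$, using $M \leq \mu$. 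Dividing by $M - m > 0$ in either case yields $c(1 - 1/p) \leq \norm{K_\alpha}_{\textnormal{L}^1(\T)}$, i.e.\ $c \leq \tfrac{p}{p-1}\norm{K_\alpha}_{\textnormal{L}^1(\T)}$, contradicting the hypothesis.

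The main technical hurdle is the asymmetry in case~\eqref{eq:n-abs}, where $n'(\varphi) \leq c$ only delivers the upper bound $\varphi \leq \mu$; the two-step argument --- first at $x_0$ to bound the mean $\fint \abs{\varphi}^p$, then at $x_1$ to bound $\abs{m}$ --- is what circumvents this asymmetry. The borderline case $c = \tfrac{p}{p-1}\norm{K_\alpha}_{\textnormal{L}^1(\T)}$ in the second part requires tracing the equality conditions in the kernel split, which (via continuity of $\varphi$) force $\varphi$ to be constant on a set of positive measure and hence trivial.
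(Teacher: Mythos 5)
Your proposal is essentially correct and follows the paper's overall strategy of testing the steady equation at the extrema of $\varphi$, but the internal structure differs in an interesting way. For the size estimate in case~\eqref{eq:n-abs}, the paper starts from~\eqref{eq:max-min-estimate}, reorganizes to $n(\varphi_{\min}) \lesssim \max\Set{n(\varphi_{\max}), (1+c)(\varphi_{\max} + \abs{\varphi_{\min}})}$ and splits into cases; you instead run a two-step pointwise argument, first at the maximiser to control $\fint_\T \abs{\varphi}^p$ (exploiting $M^p - cM \leq 0$), then at the minimiser. This works and in fact gives the cleaner bound $\abs{m} \lesssim 1$ in the bad sub-case $\abs{m} > M$, rather than $\lesssim (1+c)^{1/(p-1)}$. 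Your split $K_\alpha = K_\alpha^+ - K_\alpha^-$ with $\norm{K_\alpha^\pm}_{\textnormal{L}^1(\T)} = \tfrac{1}{2}\norm{K_\alpha}_{\textnormal{L}^1(\T)}$ is a useful sharpening of the paper's single estimate $\abs{\textnormal{D}}^{-\alpha}\varphi(x_{\max}) - \abs{\textnormal{D}}^{-\alpha}\varphi(x_{\min}) \leq \norm{K_\alpha}_{\textnormal{L}^1(\T)}(\varphi_{\max}-\varphi_{\min})$; the two one-sided bounds it produces are precisely what your pointwise argument at the extrema needs, and they recover the paper's estimate on summing.

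Two comments on the threshold argument. First, in case~\eqref{eq:n-abs} you wrote the entire chain with nonstrict $\leq$ and then asserted a contradiction with $c \geq \tfrac{p}{p-1}\norm{K_\alpha}_{\textnormal{L}^1(\T)}$; but a nonstrict $c \leq \tfrac{p}{p-1}\norm{K_\alpha}_{\textnormal{L}^1(\T)}$ does not contradict that hypothesis at the borderline. You do have the needed strictness — $M^p - \abs{m}^p < M^p$ since $\abs{m} > 0$ for nontrivial zero-mean $\varphi$ — but should make it explicit; the paper handles this via the identity $a^p - \abs{b}^p = a^{p-1}(a-b) - \abs{b}(a^{p-1} + \abs{b}^{p-1}) < a^{p-1}(a-b)$. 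Second, and more seriously, your equality analysis for the borderline case in~\eqref{eq:n-sgn} (where $M = \abs{m} = \mu$ makes the elementary step an equality) is not correct as stated: the implication \emph{``$\varphi$ is constant on a set of positive measure, hence trivial''} is false. The correct argument is that equality in your one-sided bound at $x_0$ forces $\varphi \equiv M$ a.e.\ on $\Set{y : K_\alpha(x_0 - y) > 0}$ (an interval around $x_0$) and $\varphi \equiv m$ a.e.\ on $\Set{y : K_\alpha(x_0 - y) < 0}$, so that $\varphi$ takes only the two values $M$ and $m$ up to a null set; for continuous $\varphi$ this is impossible unless $M = m$, i.e.\ $\varphi \equiv 0$. (The paper disposes of case~\eqref{eq:n-sgn} with a one-line ``similarly'' and does not spell this out either, but your explicit attempt is wrong and should be repaired.)
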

\vspace*{-.5em} 
\begin{proof}
In case~\eqref{eq:n-sgn} for \({ n'(\varphi) \leq c }\), the bound in \({ \textnormal{L}^\infty }\) is immediate since~\({ \mu \sim c^{1/(p - 1)} }\). As regards~\eqref{eq:n-abs}, we need to control the minimum of a nontrivial~\({ \varphi }\). Let \({ x_\textnormal{min} }\) and \({ x_\textnormal{max} }\) be points where \({ \varphi }\) attains its global minimum~\({ \varphi_\textnormal{min} }\) and maximum~\({ \varphi_\textnormal{max} }\), respectively, where we note that \({ \varphi_\textnormal{max} > 0 > \varphi_\textnormal{min} }\) as \({ \varphi }\) has zero mean. From~\eqref{eq:steady} we then find that
\pagebreak 
\begin{align} \label{eq:max-min-estimate}
\begin{aligned}
c \,\bigl( \varphi_\textnormal{max} - \varphi_\textnormal{min} \bigr) - \bigl( n(\varphi_\textnormal{max}) - n(\varphi_\textnormal{min}) \bigr) &= \abs{ \textnormal{D} }^{- \alpha} \varphi(x_\textnormal{max}) - \abs{ \textnormal{D} }^{- \alpha} \varphi(x_\textnormal{min}) \\[1ex]
&\leq \norm{ K_\alpha }_{ \textnormal{L}^1(\T) } \bigl( \varphi_\textnormal{max} - \varphi_\textnormal{min} \bigr),
\end{aligned}
\end{align}
which leads to
\begin{align*} \SwapAboveDisplaySkip
n(\varphi_\textnormal{min}) &\leq n(\varphi_\textnormal{max}) + ( \norm{ K_\alpha }_{ \textnormal{L}^1(\T) } - c ) ( \varphi_\textnormal{max} + \abs{\varphi_\textnormal{min}} ) \\[1ex]
& \lesssim \max \Set[\big]{ n(\varphi_\textnormal{max}), (1 + c)  ( \varphi_\textnormal{max} + \abs{\varphi_\textnormal{min}} )}.
\end{align*}
In the first situation, \({ n(\varphi_\textnormal{min}) \lesssim n(\varphi_\textnormal{max}) }\), so that \({ \abs{\varphi_\textnormal{min}} \lesssim \varphi_\textnormal{max} \leq \mu }\). In the second situation, it suffices to investigate the case \({ \varphi_\textnormal{max} < \abs{ \varphi_\textnormal{min} } }\) (otherwise we freely get \({ \abs{\varphi_\textnormal{min}} \leq  \varphi_\textnormal{max} \leq \mu }\)). Then
\begin{equation*}
\abs{ \varphi_\textnormal{min} }^{p} = n(\varphi_\textnormal{min}) \lesssim (1 + c) \abs{ \varphi_\textnormal{min} },
\end{equation*}
implying that \({ \abs{ \varphi_\textnormal{min} } \lesssim (1 + c)^{1/(p - 1)} }\).

For the last part, we use in case~\eqref{eq:n-abs} that for \({ a > 0 > b }\) one has
\begin{equation*}
a^{p} - \abs{ b }^{p} = a^{p - 1} (a - b) - \abs{ b }( a^{p - 1} + \abs{ b }^{p - 1}) < a^{p - 1} (a - b).
\end{equation*}
Applied to \({ a = \varphi_\textnormal{max} }\) and \({ b = \varphi_\textnormal{min} }\), we reorder~\eqref{eq:max-min-estimate} and find that
\begin{equation*}
c - \norm{ K_\alpha }_{ \textnormal{L}^1(\T) } \leq \frac{ n(\varphi_\textnormal{max}) - n(\varphi_\textnormal{min}) }{ \varphi_\textnormal{max} - \varphi_\textnormal{min} } < \varphi_\textnormal{max}^{p - 1} \leq \mu^{p - 1} = \frac{ c }{ p },
\end{equation*}
which gives \({ c < \frac{ p }{ p - 1 } \norm{ K_\alpha }_{ \textnormal{L}^1(\T) } }\). One may similarly treat case~\eqref{eq:n-sgn}.
\end{proof}

Next, we want to establish regularity for solutions which stay away from~\({ (\pm) \mu }\). To this end, we need the inverse function theorem for the Hölder scale. We could not find a proof in the literature and therefore provide a short argument.

\begin{proposition}[Inverse function theorem for \({ \textnormal{C}^{m, \beta} }\)] \label{thm:inverse-function}
Let \({ m \geq 1 }\) be an integer and \({ \beta \in (0, 1] }\), and assume that a function~\({ f }\) is \({ \textnormal{C}^{m, \beta} }\)~regular and strictly monotone on a compact interval~\({ I \subset \R }\). Then \({ f^{-1} }\) is \({ \textnormal{C}^{m, \beta} }\)~regular on~\({ f(I) }\).
\end{proposition}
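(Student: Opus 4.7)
The plan is to reduce the claim to two well-known ingredients: the classical $\textnormal{C}^m$ inverse function theorem, and the algebra/composition properties of Hölder spaces. The work lies in upgrading the top-order derivative of $f^{-1}$ from merely continuous to $\beta$-Hölder continuous by tracking a Faà di Bruno-type representation.

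First I would verify that $f' \neq 0$ on $I$, using strict monotonicity together with the $\textnormal{C}^{m,\beta}$ regularity implicit in the setting where this proposition is invoked (otherwise the statement fails, as $f(x)=x^3$ shows). Since $f' \in \textnormal{C}(I)$ is then of constant sign and $I$ is compact, there exists $\delta > 0$ with $\abs{ f' } \geq \delta$ throughout $I$. The classical inverse function theorem then yields $f^{-1} \in \textnormal{C}^m(f(I))$, with
\begin{equation*}
(f^{-1})'(y) = \frac{1}{f'(f^{-1}(y))},
\end{equation*}
and, by induction using the chain rule, the $k$-th derivative takes the form
\begin{equation*}
(f^{-1})^{(k)}(y) = \frac{P_k\bigl( f'(f^{-1}(y)), f''(f^{-1}(y)), \dotsc, f^{(k)}(f^{-1}(y)) \bigr)}{\bigl[ f'(f^{-1}(y)) \bigr]^{2k-1}}
\end{equation*}
for $k = 1, \dotsc, m$, where $P_k$ is a polynomial in its arguments.

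Next, I would exploit that $f^{-1}$ is Lipschitz on $f(I)$: its derivative is bounded in modulus by $1/\delta$. The elementary composition estimate
\begin{equation*}
\abs{ (g \circ h)(y_1) - (g \circ h)(y_2) } \leq \seminorm{ g }_{ \textnormal{C}^\beta(I) } \, \textnormal{Lip}(h)^\beta \, \abs{ y_1 - y_2 }^\beta
\end{equation*}
shows that $g \circ h \in \textnormal{C}^\beta(f(I))$ whenever $g \in \textnormal{C}^\beta(I)$ and $h \colon f(I) \to I$ is Lipschitz. Applied with $g = f^{(m)}$ this gives $f^{(m)} \circ f^{-1} \in \textnormal{C}^\beta(f(I))$, and for $k < m$ the derivative $f^{(k)}$ is $\textnormal{C}^{m-k,\beta}$, hence a fortiori in $\textnormal{C}^\beta(I)$, so $f^{(k)} \circ f^{-1} \in \textnormal{C}^\beta(f(I))$ as well.

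Finally, since $\textnormal{C}^\beta(f(I))$ is an algebra under pointwise multiplication, and since $1/g \in \textnormal{C}^\beta(f(I))$ whenever $g \in \textnormal{C}^\beta(f(I))$ is bounded away from zero (here $g = f' \circ f^{-1}$ satisfies $\abs{ g } \geq \delta$), the rational expression defining $(f^{-1})^{(m)}$ lies in $\textnormal{C}^\beta(f(I))$. Hence $f^{-1} \in \textnormal{C}^{m,\beta}(f(I))$. The only delicate point is ensuring the non-vanishing of $f'$; everything else is a mechanical bookkeeping of the Faà di Bruno-type formula together with the composition and algebra lemmas for Hölder functions.
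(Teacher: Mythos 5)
Your argument is correct and essentially matches the paper's: the paper's inline seminorm computation for $(f^{-1})'$ telescopes your three ingredients — the Lipschitz bound $\lvert (f^{-1})' \rvert \leq 1/\delta$, the composition estimate for $\textnormal{C}^\beta$ functions precomposed with a Lipschitz map, and the reciprocal/algebra bounds — into a single chain of equalities using the mean value theorem, and both treatments dispatch orders $2, \dotsc, m$ with a brief remark that the same estimates apply to the Faà di Bruno-type representation. Your caveat that strict monotonicity plus $\textnormal{C}^{m,\beta}$ regularity does not by itself guarantee $f' \neq 0$ is well taken ($f(x) = x^3$ is the standard counterexample, so the statement as written is formally too weak); the paper's proof tacitly assumes nonvanishing when it invokes the classical inverse function theorem, which is harmless only because the sole application, in the proof of \cref{thm:smoothness}, is to $N$ on compact intervals where $N' = c - n'(\varphi)$ is uniformly bounded away from zero.
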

\begin{proof}
We only establish \({ \beta }\)-Hölder regularity of \({ g' }\), where~\({ g \coloneqq f^{-1} }\); the rest follows by the standard inverse function theorem and similar estimates for the higher-order derivatives. To this end, let \({ x, y \in f(I) }\) be different and observe that
\begin{align*}
\frac{ \abs{ g'(x) - g'(y) } }{ \abs{ x - y }^{ \beta }  } &= \frac{ \abs[\Big]{ \displaystyle \frac{ 1 }{ f'(g(x)) } - \frac{ 1 }{ f'(g(y)) } } }{ \abs{ x - y }^{ \beta }  } \\[1ex]
&= \frac{ 1 }{ \abs{ f'(g(x)) \, f'(g(y)) } } \cdot \frac{ \abs{ f'(g(x)) - f'(g(y)) } }{ \abs{ g(x) - g(y) }^\beta } \cdot \abs*{ \frac{ g(x) - g(y) }{ x - y } }^\beta \\[1ex]
&= \abs{ g'(x) \, g'(y) } \cdot \frac{ \abs{ f'(g(x)) - f'(g(y)) } }{ \abs{ g(x) - g(y) }^\beta } \cdot \abs{ g'(z) }^\beta
\end{align*}
for some \({ z }\) between \({ x }\) and \({ y }\) by the mean value theorem. Hence, \({ \seminorm{g'}_{ \textnormal{C}^{\beta}} \leq \norm{ g' }_{ \infty }^{ 2 + \beta } \seminorm{ f' }_{ \textnormal{C}^\beta } < \infty }\), where we note that \({ g' }\) is bounded on the compact set~\({ f(I) }\) due to its continuity by the standard inverse function theorem.
\end{proof}

\begin{lemma} \label{thm:smoothness}
Let \({ \varphi \in \mathring{\textnormal{C}}(\T) }\) be a solution of~\eqref{eq:steady}. Then
\begin{enumerate}[ref=\roman*)]
\item \label{thm:smoothness-open}
\({ \varphi }\) is smooth on any open set where \({ n'(\varphi) < c }\) and that does not contain the boundary~\({ \partial(\varphi^{-1}(0)) }\) of the set~\({ \varphi^{-1}(0) }\); and
\item \label{thm:smoothness-zero}
\({ \varphi }\) has at least the same regularity in the Hölder scale around \({ \partial( \varphi^{-1}(0)) }\) as the nonlinearity~\({ n }\) around~\({ 0 }\).
\end{enumerate}
In particular, if \({ n }\) is smooth, then so is \({ \varphi }\) on any open set where~\({ n'(\varphi) < c }\).
\end{lemma}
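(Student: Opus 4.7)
The strategy is to invert $N$ on the regions where it is monotone and then bootstrap regularity. Where $n'(\varphi) < c$, we have $N'(\varphi) = c - n'(\varphi) > 0$, so $N$ is strictly increasing there and \eqref{eq:steady} can be solved locally as
\begin{equation*}
\varphi = N^{-1}\Bigl( \abs{\textnormal{D}}^{-\alpha}\varphi - \textstyle\fint_\T n(\varphi) \Bigr).
\end{equation*}
The three ingredients then are: the local $\alpha$-smoothing property of $\abs{\textnormal{D}}^{-\alpha}$ (\cref{thm:local-smoothing}); the Hölder inverse function theorem (\cref{thm:inverse-function}), which transfers the Hölder regularity of $N$ onto $N^{-1}$; and the standard fact that composition with a scalar $\textnormal{C}^{m,\beta}$ function preserves $\textnormal{C}^{m,\beta}$ on relatively compact ranges. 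Note that both nonlinearities in~\eqref{eq:n} satisfy $n'(0) = 0 < c$, so $N$ is strictly increasing in a neighbourhood of $\varphi = 0$ as well.

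For part~\ref{thm:smoothness-open}, let $U$ be an open set with $n'(\varphi) < c$ on $U$ and $U \cap \partial(\varphi^{-1}(0)) = \emptyset$. Then $U$ decomposes into $U \cap \interior(\varphi^{-1}(0))$, where $\varphi \equiv 0$ is trivially smooth, and $U \setminus \varphi^{-1}(0)$, where $\varphi$ avoids $0$. On any precompact subneighbourhood $V \Subset U \setminus \varphi^{-1}(0)$ the range of $\varphi|_V$ is bounded away from $0$, so $n$ is $\textnormal{C}^\infty$ on it, hence so is $N$, and \cref{thm:inverse-function} gives $N^{-1} \in \textnormal{C}^\infty$ on $N(\varphi(V))$. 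Starting from $\varphi \in \mathring{\textnormal{C}}(\T)$, \cref{thm:local-smoothing} yields $\abs{\textnormal{D}}^{-\alpha}\varphi \in \textnormal{C}_{\ast,\textnormal{loc}}^\alpha(V)$, and composition with the smooth $N^{-1}$ transports this regularity to $\varphi$. Iterating yields $\varphi \in \textnormal{C}_{\ast,\textnormal{loc}}^{k\alpha}(V)$ for every $k \in \N$, hence $\varphi \in \textnormal{C}^\infty(V)$; because $V$ was arbitrary, $\varphi \in \textnormal{C}^\infty(U)$.

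For part~\ref{thm:smoothness-zero}, let $x_0 \in \partial(\varphi^{-1}(0))$, so that $\varphi(x_0) = 0$ by continuity and closedness of $\varphi^{-1}(0)$. Say $n \in \textnormal{C}^{m,\beta}$ in a neighbourhood of $0$. Since $N'(0) = c > 0$, \cref{thm:inverse-function} gives $N^{-1} \in \textnormal{C}^{m,\beta}$ on an interval around $0$, into which $\abs{\textnormal{D}}^{-\alpha}\varphi - \fint_\T n(\varphi)$ maps a suitable neighbourhood $V$ of $x_0$. The same bootstrap as above now runs, except that each composition with $N^{-1}$ caps regularity at $\textnormal{C}^{m,\beta}$: once $\varphi \in \textnormal{C}_{\ast,\textnormal{loc}}^{m+\beta}(V)$ is reached, the next pass produces $\abs{\textnormal{D}}^{-\alpha}\varphi \in \textnormal{C}_{\ast,\textnormal{loc}}^{m+\beta+\alpha}(V)$, but composition with the $\textnormal{C}^{m,\beta}$ function $N^{-1}$ returns $\varphi \in \textnormal{C}_{\ast,\textnormal{loc}}^{m+\beta}(V)$. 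This yields the claimed regularity around $x_0$, and the final sentence follows by combining~\ref{thm:smoothness-open} and~\ref{thm:smoothness-zero}.

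The main obstacle is ensuring that the bootstrap is not obstructed at integer levels of the Hölder scale, where $\textnormal{C}^k \subsetneq \textnormal{C}^{k-1,1} \subsetneq \textnormal{C}_\ast^k$; working consistently in $\textnormal{C}_\ast^s$ sidesteps this, with \cref{thm:inverse-function} applied only on $\textnormal{C}^{m,\beta}$ at a single ($m+\beta$ non-integer) level sufficient to produce the requisite smoothness of $N^{-1}$. A secondary subtlety is uniformity of the domain of invertibility as $\varphi$ passes through $0$, handled by restricting to small enough $V$ so that the range of $\varphi|_V$ stays in a common interval on which $N' \geq c - n'(\varphi) > 0$.
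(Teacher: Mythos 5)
Your proposal is correct and follows essentially the same route as the paper: invert $N$ on regions of strict monotonicity, exploit the local $\alpha$-smoothing of $\abs{\textnormal{D}}^{-\alpha}$ on the Zygmund scale, and bootstrap through the composition operator $N^{-1}$ using the Hölder inverse function theorem. Two small remarks: your decomposition of $U$ into $U \cap \interior(\varphi^{-1}(0))$ and $U \setminus \varphi^{-1}(0)$ is slightly more explicit than the paper's treatment of part~\ref{thm:smoothness-open} (the paper implicitly uses that a connected compact $I$ avoiding $\partial(\varphi^{-1}(0))$ lies entirely in one of the two components); and the \enquote{standard fact} about composition operators needs, in general, the extra hypothesis that the outer function is locally Lipschitz (as the paper records via the Goebel--Sachweh citation: $F \in \textnormal{C}^{m,\beta}_{\textnormal{loc}} \cap \textnormal{C}^{0,1}_{\textnormal{loc}}$), but this is automatic here because $p > 1$ forces $n$, hence $N^{-1}$, to be at least $\textnormal{C}^1$.
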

\begin{proof}
Note first by translation invariance (\({ \abs{ \textnormal{D} }^{- \alpha} }\) is a convolution operator) that if \({ \varphi }\) is a solution of~\eqref{eq:steady}, then so is \({ \varphi(\cdot + h) }\) for any~\({ h \in \R }\). Accordingly, it suffices to consider open sets~\({ U \subseteq (-\uppi, \uppi) }\) where \({ n'(\varphi) < c }\), so that~\eqref{eq:range} holds uniformly on every compact interval~\({ I \subset U }\). By the inverse function theorem (\cref{thm:inverse-function}), it follows that \({ N^{-1} }\) exists on \({ \varphi(I) }\) and has the same regularity as \({ n }\) in the Hölder scale. As such, we may then invert~\eqref{eq:steady} to get the pointwise relation
\begin{equation} \label{eq:composition-operator}
\varphi(x) = G( \varphi, c )(x) \coloneqq N^{-1} \bigl( \abs{ \textnormal{D} }^{- \alpha} \varphi(x) - \textstyle\fint_\T n(\varphi) \bigr)
\end{equation}
for \({ x \in I }\), where \({ G }\) is a nonlinear composition operator, depending implicitly on~\({ c }\) via~\({ N^{-1} }\).

It is clear from the (higher-order) chain rule that an operator \({ f \mapsto F \circ f }\) maps the space \({ \textnormal{C}^m(I) }\) into itself provided that \({ F \in \textnormal{C}^m_{ \textnormal{loc}}(\R) }\). More generally, the same remains true for \({ \textnormal{C}^{m, \beta}(I) }\) for all \({ m \in \N_0 }\) and \({ \beta \in (0, 1] }\) if \({ F \in \textnormal{C}^{m, \beta}_{ \textnormal{loc}}(\R) \cap \textnormal{C}^{0, 1}_{ \textnormal{loc}}(\R) }\) by~\autocite[Theorems~2.1, 4.1 and~5.1]{GoeSac1999a}, and the composition operator is also bounded (maps bounded sets to bounded sets). Therefore, since \({ \varphi \in \mathring{ \textnormal{C}}(\T) \hookrightarrow \textnormal{C}_{\ast, \textnormal{loc}}^{0}(U) }\) and \({ \abs{ \textnormal{D} }^{- \alpha} }\) is locally \({ \textnormal{C}_\ast^\alpha }\)~smoothing by \cref{thm:local-smoothing}, it follows by bootstrapping of \({ G(\cdot, c) }\) in~\eqref{eq:composition-operator} that \({ \varphi }\) has at least the same \({ \textnormal{C}^{m, \beta} }\) Hölder regularity as \({ N^{-1} }\) (that is, as~\({ n }\)) on~\({ I }\). In particular, \({ \varphi }\) has the given regularity around \({ \partial (\varphi^{-1}(0)) }\) by applying this result to a covering \({ \Set{ I_j }_j }\) of compact intervals~\({ I_j }\) such that \({ (- \uppi, \uppi) \supset \bigcup_j I_j^{} \supset \partial(\varphi^{-1}(0))}\), which proves property~\ref{thm:smoothness-zero}.

Similarly, when \({ U }\) does not contain \({ \partial (\varphi^{-1}(0)) }\), we know that \({ N^{-1} }\) is smooth on \({ \varphi(I) \not\ni 0 }\), and so bootstrapping~\eqref{eq:composition-operator} yields that \({ \varphi }\) is smooth on~\({ I }\). As \({ I \subset U }\) was arbitrary, this establishes property~\ref{thm:smoothness-open}.
\end{proof}

We continue by proving a nodal pattern for solutions which stay away from~\({ \pm \mu }\). The result will be crucial in establishing that the global bifurcation branch of solutions in \cref{sec:bifurcation} is not periodic.

\begin{theorem}[Nodal pattern] \label{thm:nodal-pattern}
Let \({ \varphi \in \mathring{\textnormal{C}}_{\textnormal{even}}(\T) }\) be a nontrivial solution of~\eqref{eq:steady} that is increasing on~\({ (- \uppi, 0) }\). If \({ \varphi \in \mathring{\textnormal{C}}_{\textnormal{even}}^1(\T) }\), then
\begin{equation*}
\varphi' > 0 \quad \text{and} \quad n'(\varphi) < c \quad \text{on} \quad (- \uppi, 0),
\end{equation*}
and \({ \varphi }\) has the regularity specified in \cref{thm:smoothness}. Moreover, if \({ \varphi }\) is also \({ \textnormal{C}^2 }\) regular around~\({ 0 }\) and \({ - \uppi }\) (in the sense of~\({ \T }\)), then \({ n'(\varphi) < c }\) everywhere,
\begin{equation*}
\varphi''(0) < 0, \quad \text{and} \quad \varphi''(- \uppi) > 0.
\end{equation*}

Conversely, if \({ n'(\varphi) \leq c }\) everywhere, then \({ \varphi }\) features the regularity in \cref{thm:smoothness}, with
\begin{equation*}
\varphi' > 0 \quad \text{on} \quad (- \uppi, 0).
\end{equation*}
\end{theorem}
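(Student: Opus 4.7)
\medskip

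My plan is to exploit the parity-preserving and strict-positivity properties of $\abs{\textnormal{D}}^{-\alpha}$ (\cref{thm:L-paritypreserving}) by differentiating the steady equation \eqref{eq:steady} and feeding suitable derivatives of~$\varphi$ into this lemma. The three sign-structure statements all follow from the same template: combine a parity/positivity argument for $\abs{\textnormal{D}}^{-\alpha}\varphi'$ (or $\abs{\textnormal{D}}^{-\alpha}\varphi''$) with the factored identity $\abs{\textnormal{D}}^{-\alpha}\varphi' = (c - n'(\varphi))\varphi'$.

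\smallskip

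\textbf{Forward direction, first part.} Under the $\textnormal{C}^1$~assumption, differentiating~\eqref{eq:steady} (the Fourier multiplier~$\abs{\textnormal{D}}^{-\alpha}$ commutes with~$\partial_x$) gives
\begin{equation*}
\abs{\textnormal{D}}^{-\alpha}\varphi' = (c - n'(\varphi))\,\varphi'\quad \text{on } \T.
\end{equation*}
Now $\varphi'$ is odd, non-negative on~$(-\uppi, 0)$ by monotonicity, and not identically zero (else $\varphi$ is constant, hence trivial by the zero-mean constraint). \Cref{thm:L-paritypreserving} then yields $\abs{\textnormal{D}}^{-\alpha}\varphi' > 0$ on $(-\uppi, 0)$. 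A pointwise vanishing of either $\varphi'(x_0)$ or $c - n'(\varphi(x_0))$ at some $x_0 \in (-\uppi, 0)$ would force $\abs{\textnormal{D}}^{-\alpha}\varphi'(x_0) = 0$, a contradiction. Hence both factors are strictly positive on $(-\uppi, 0)$, and the asserted regularity is immediate from \cref{thm:smoothness}.

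\smallskip

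\textbf{Forward direction, second part.} By what we just proved, $\varphi$ is already smooth on $(-\uppi, 0) \cup (0, \uppi)$, so the additional $\textnormal{C}^2$~hypothesis at $0$ and~$-\uppi$ actually delivers $\varphi \in \textnormal{C}^2(\T)$. Differentiating~\eqref{eq:steady} twice,
\begin{equation*}
\abs{\textnormal{D}}^{-\alpha}\varphi'' = (c - n'(\varphi))\,\varphi'' - n''(\varphi)(\varphi')^2,
\end{equation*}
and evaluating at the critical points $x \in \{0, -\uppi\}$, where $\varphi'$ vanishes by oddness and periodicity, reduces the right-hand side to $(c - n'(\varphi(x)))\varphi''(x)$. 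To identify the sign of the left-hand side I will integrate by parts around the cusp of~$K_\alpha$: excising an $\epsilon$-neighbourhood of the kernel singularity, the boundary contribution $K_\alpha(\epsilon)\varphi'(\epsilon) \lesssim \epsilon^\alpha$ vanishes as $\epsilon \searrow 0$, and $K_\alpha'(y)\varphi'(y) = \mathcal{O}(\abs{y}^{\alpha-1})$ is integrable, yielding
\begin{equation*}
\abs{\textnormal{D}}^{-\alpha}\varphi''(0) = -\int_{-\uppi}^{\uppi} K_\alpha'(y)\varphi'(y) \dee y
\qquad \text{and} \qquad
\abs{\textnormal{D}}^{-\alpha}\varphi''(-\uppi) = \int_{-\uppi}^{\uppi} K_\alpha'(\uppi - y)\varphi'(y) \dee y.
\end{equation*}
Strict monotonicity of $K_\alpha$ on $(-\uppi, 0)$ (\cref{thm:K-explicit}) together with the sign pattern of~$\varphi'$ from the first part makes the first integrand pointwise positive and the second integrand pointwise positive, so $\abs{\textnormal{D}}^{-\alpha}\varphi''(0) < 0 < \abs{\textnormal{D}}^{-\alpha}\varphi''(-\uppi)$. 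Since $0$ is a global maximum and $-\uppi$ a global minimum we have $\varphi''(0) \leq 0 \leq \varphi''(-\uppi)$, and comparing signs in $(c - n'(\varphi(x)))\varphi''(x)$ forces strict inequalities $\varphi''(0) < 0$, $\varphi''(-\uppi) > 0$ together with $n'(\varphi) < c$ at both endpoints. By evenness and the first part, $n'(\varphi) < c$ on all of~$\T$.

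\smallskip

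\textbf{Converse direction.} Here the main obstacle is the lack of a~priori $\textnormal{C}^1$~regularity, since Hölder cusps at $\varphi = (\pm)\mu$ are not excluded by $n'(\varphi) \leq c$; I will therefore work with the distributional derivative $\mu_\varphi$, which by monotonicity is a signed measure that is non-negative and non-zero on~$(-\uppi, 0)$ and odd on~$\T$. Differentiating~\eqref{eq:steady} distributionally gives $K_\alpha \ast \mu_\varphi = (c - n'(\varphi))\mu_\varphi$ as measures. The proof of \cref{thm:L-paritypreserving} extends verbatim to such measures because its kernel $(y \mapsto K_\alpha(x-y) - K_\alpha(x+y))$ is, for each $x \in (-\uppi, 0)$, continuous and strictly positive off the diagonal; this yields $(K_\alpha \ast \mu_\varphi)(x) > 0$ for every $x \in (-\uppi, 0)$. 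If $\varphi$ were constant on some subinterval $[x_1, x_2] \subset (-\uppi, 0)$ then $\mu_\varphi$, and hence $(c - n'(\varphi))\mu_\varphi$, would vanish on $(x_1, x_2)$, contradicting the strict positivity just obtained. Thus $\varphi$ is strictly increasing on $(-\uppi, 0)$, which gives $\varphi(x) < \max\varphi$ and (in the sign-dependent case) $\varphi(x) > \min\varphi$ throughout $(-\uppi, 0)$, so $n'(\varphi) < c$ on this open interval and \cref{thm:smoothness} upgrades $\varphi$ to smoothness there. Finally, since $c - n'(\varphi) > 0$ is continuous and strictly positive on $(-\uppi, 0)$, the identity $\mu_\varphi = (c - n'(\varphi))^{-1}(K_\alpha \ast \mu_\varphi)$ exhibits $\mu_\varphi$ on $(-\uppi, 0)$ as a strictly positive continuous function, i.e.~$\varphi' > 0$ pointwise there.
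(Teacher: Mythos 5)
Your forward direction follows the paper's argument: differentiate once and factor for the first part, and for the second-derivative conclusion you carry out essentially the same integration by parts around the kernel singularity that the paper frames as a limit computation for $(\abs{\textnormal{D}}^{-\alpha}\varphi')'(0)$. One small imprecision: the $\textnormal{C}^2$-hypothesis at $0$ and $-\uppi$ does \emph{not} deliver $\varphi \in \textnormal{C}^2(\T)$, since by part~\ref{thm:smoothness-zero} of \cref{thm:smoothness} the regularity near $\partial(\varphi^{-1}(0))$ is only that of $n$ near $0$, which falls short of $\textnormal{C}^2$ when $1 < p < 2$. This does not harm you --- your final formulas only consume $\varphi'$ (globally continuous) together with $\varphi''$ near the two critical points --- but it is cleaner to phrase the whole computation as one for $(\abs{\textnormal{D}}^{-\alpha}\varphi')'$, as the paper does, rather than invoking a global $\abs{\textnormal{D}}^{-\alpha}\varphi''$.

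Your converse takes a genuinely different route. The paper never differentiates the equation there: it derives strict monotonicity of $\abs{\textnormal{D}}^{-\alpha}\varphi$, hence of $N(\varphi)$, directly from the symmetrised first-order difference identity~\eqref{eq:double-symmetrisation}, upgrades to strict monotonicity of $\varphi$ by a mean-value argument, and then obtains $\varphi' > 0$ by applying Fatou's lemma to central difference quotients. Your route --- regarding $\mu_\varphi$ as a finite measure, differentiating distributionally, and extending \cref{thm:L-paritypreserving} to measures --- also works, but it quietly rests on two nontrivial facts the paper's approach avoids: the chain rule $\partial_x N(\varphi) = N'(\varphi)\mu_\varphi$ for continuous BV compositions (Vol'pert), and the well-definedness in $(0, \infty]$ and strict positivity of $x \mapsto \int_{-\uppi}^0 \bigl(K_\alpha(x-y) - K_\alpha(x+y)\bigr)\dee\mu_\varphi(y)$ for every $x \in (-\uppi, 0)$. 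On that last point, note the kernel is \emph{unbounded} at $y = x$, not continuous as you state; what actually drives the conclusion is that it is $\mu_\varphi$-a.e.\@ strictly positive and that $\mu_\varphi$ is atomless since $\varphi$ is continuous. The measure route buys a one-step identification of strict monotonicity, while the paper's buys elementarity, staying within continuous functions and difference quotients until the very end.
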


\begin{remark}
We write \enquote{in-/decreasing} instead of \enquote{nonde-/increasing}, so that constant functions are both increasing and decreasing, and add the prefix \enquote{strictly} for the nontrivial cases.
\end{remark}

\begin{proof}
In the first case, \({ \varphi' }\) is odd and satisfies \({ \varphi' \geq 0 }\) on~\({ (-\uppi, 0) }\) with \({ \varphi'(y_0) > 0 }\) for some~\({ y_0 \in (- \uppi, 0) }\), as \({ \varphi }\) is nonconstant and increasing, so that \({ \abs{ \textnormal{D} }^{- \alpha} \varphi' > 0 }\) on \({ (- \uppi, 0) }\) by \cref{thm:L-paritypreserving}. We then differentiate in~\eqref{eq:steady} to find that
\begin{equation*}
N'(\varphi) \varphi' = \abs{ \textnormal{D} }^{- \alpha} \varphi' > 0 \quad \text{on} \quad (- \uppi, 0),
\end{equation*}
which implies that both \({ N'(\varphi) = c - n'(\varphi) }\) and \({ \varphi' }\) are strictly positive on that interval.

If \({ \varphi }\) is also \({ \textnormal{C}^2 }\) around~\({ 0 }\), we differentiate~\eqref{eq:steady} twice and use that \({ \varphi'(0) = 0 }\) to obtain
\begin{align*}
N'(\varphi(0)) \, \varphi''(0) &= (\abs{ \textnormal{D} }^{- \alpha} \varphi')'(0) \\[1ex]
&= \lim_{ r \searrow 0 } \frac{\dee  }{\dee x} \Biggg( \smashoperator{\int_{ \hspace*{2em}\abs{ y } < r }} K_\alpha(y) \varphi'(x - y) \dee y + \smashoperator{\int_{\uppi \geq \abs{ y } \geq r }} K_\alpha(y) \varphi'(x - y) \dee y \Biggg) \Biggr\vert_{x = 0},
\end{align*}
where we have also isolated the singularity of~\({ K_\alpha }\) and interchanged limits (which is legitimate since \({ (\abs{ \textnormal{D} }^{- \alpha} \varphi')' }\) is continuous around~\({ 0 }\)). Leibniz's integral rule now gives
\begin{equation*}
\frac{\dee  }{\dee x} \smashoperator{\int_{\hspace*{2em}\abs{ y } < r }} K_\alpha(y) \varphi'(x - y) \dee y \bigg\rvert_{x = 0} = \smashoperator{\int_{\hspace*{2em}\abs{ y } < r }} K_\alpha(y) \varphi''(y) \dee y,
\end{equation*}
and the latter integral vanishes as \({ r \searrow 0 }\) because \({ K_\alpha }\) is integrable and \({ \varphi'' }\) is continuous around~\({ 0 }\). By Leibniz's rule once more, we also find that
\begin{align*}
\tfrac{ 1 }{ 2 } \frac{\dee }{\dee x} \smashoperator{\int_{\hspace*{3em} \uppi \geq \abs{ y } \geq r }} K_\alpha(y) \varphi'(x - y) \dee y \bigg\rvert_{x = 0} &= - \tfrac{ 1 }{ 2 } \frac{\dee  }{\dee x} \smashoperator{\int_{\hspace*{4em} \uppi \geq \abs{ x - y } \geq r }} K_\alpha(x - y) \varphi'(y) \dee y \bigg\rvert_{x = 0} \\[1ex]
&= K_\alpha(r) \varphi'(r) - K_\alpha( \uppi) \underbrace{\varphi'(\uppi)}_{ = 0} - \int_{ r }^{ \uppi } K_\alpha'(y) \varphi'(y) \dee y,
\end{align*}
where we have utilised that \({ K_\alpha }\) is even and \({ \varphi' }\) is odd. Observe that \({ K_\alpha(r) \eqsim \abs{ r }^{ \alpha - 1} }\) and \({ \varphi'(r) = \mathcal{O}(r) }\) (because \({ \varphi'' }\) is continuous around~\({ 0 }\)) as \({ r \searrow 0 }\), which means that \({ K_\alpha(r) \varphi'(r) }\) vanishes in the limit. Since \({ K_\alpha' }\) and \({ \varphi' }\) are strictly negative on \({ (0, \uppi) }\) by \cref{thm:K-explicit} and the assumption, respectively, we further infer that \({ -\int_{ r }^{ \uppi } K_\alpha'(y) \varphi'(y) \dee y }\) is both negative and strictly decreasing as~\({ r \searrow 0 }\). As such, we obtain
\begin{equation*}
\underbrace{\left(c - n'(\varphi(0))\right)}_{= N'(\varphi(0))} \varphi''(0) = (\abs{ \textnormal{D} }^{- \alpha} \varphi')'(0) = - 2 \lim_{ r \searrow 0 } \int_{ r }^{ \uppi } K_\alpha'(y) \varphi'(y) \dee y < 0.
\end{equation*}
Since \({ n'(\varphi) < c }\) on \({ (-\uppi, \uppi) \setminus \Set{ 0 } }\) and \({ n'(\varphi) }\) is continuous, it follows that \({ n'(\varphi(0)) < c }\) also, and consequently \({ \varphi''(0) < 0 }\). By similar calculations one finds that \({ n'(\varphi(- \uppi)) < c }\) (for free in case~\eqref{eq:n-abs}) and~\({ \varphi''(- \uppi) > 0 }\).

Conversely, suppose that \({ n'(\varphi) \leq c }\) everywhere. If in fact \({ n'(\varphi) < c }\) uniformly, then \cref{thm:smoothness} implies that \({ \varphi \in \mathring{\textnormal{C}}^1(\T) }\), which leads to \({ \varphi' > 0 }\) on \({ (-\uppi, 0) }\) by the first case of \cref{thm:nodal-pattern}. When \({ n'(\varphi) }\) touches~\({ c }\), however, we must use a different approach. Note that \({ \varphi }\) is differentiable almost everywhere on \({ (- \uppi, 0) }\) by Lebesgue's theorem for increasing functions, and that we may also use central differences to compute~\({ \varphi' }\). To this end, observe that 
\begin{equation} \label{eq:double-symmetrisation}
\abs{ \textnormal{D} }^{- \alpha} \varphi(x + h) - \abs{ \textnormal{D} }^{- \alpha} \varphi(x - h) = \int_{ - \uppi }^{ 0 } \bigl( K_\alpha(y - x) - K_\alpha(y + x) \bigr) \, \bigl( \varphi(y + h) - \varphi(y - h) \bigr) \dee y
\end{equation}
for \({ x \in (- \uppi, 0) }\) and \({ h \in (0, \uppi) }\) by periodicity and evenness of \({ K_\alpha }\) and~\({ \varphi }\). The second factor in the integrand is nonnegative by assumption, whereas the first factor is strictly positive by \cref{thm:K-explicit}. Consequently, since \({ \varphi }\) is nontrivial, \({ \abs{ \textnormal{D} }^{- \alpha} \varphi }\) and therefore also \({ N(\varphi) }\) are strictly increasing on~\({ (- \uppi, 0) }\). Then for all \({ -\uppi < y < x < 0 }\) we find that
\begin{equation} \label{eq:N-increasing}
0 < N(\varphi(x)) - N(\varphi(y)) = (\varphi(x) - \varphi(y)) \underbrace{N'(\varphi(\xi))}_{ > 0}
\end{equation}
for some \({ \xi \in (y, x) }\) by the mean value and intermediate value theorems, which yields that \({ \varphi }\) is strictly increasing on \({ (- \uppi, 0) }\). Moreover, \eqref{eq:double-symmetrisation} and~\eqref{eq:N-increasing} together show that
\begin{align} \label{eq:fatou}
\begin{aligned}
N'(\varphi(x)) \, \varphi'(x) &= \lim_{ h \searrow 0 } \frac{ N(\varphi(x + h)) - N(\varphi(x - h)) }{ 2h } \\[1ex]
&= \lim_{ h \searrow 0 } \frac{ \abs{ \textnormal{D} }^{- \alpha} \varphi(x + h) - \abs{ \textnormal{D} }^{- \alpha} \varphi(x - h)  }{2h} \\[1ex]
&\geq \int_{ - \uppi }^{ 0 } \bigl( K_\alpha(y - x) - K_\alpha(y + x) \bigr) \,\varphi'(y) \dee y,
\end{aligned}
\end{align}
where we have applied Fatou's lemma in the last transition. Focusing on~\({ (-\uppi, 0) }\), we know that both the first factor in the integrand is strictly positive, \({ N'(\varphi) = c - n'(\varphi) > 0 }\) (because \({ \varphi }\) is strictly increasing), and \({ \varphi' \gneq 0 }\). Thus \({ \varphi' > 0 }\) on~\({ (-\uppi, 0) }\).
\end{proof}

We next start to investigate what happens if solutions touch~\({ \mu }\), and begin with a one-sided \({ \alpha }\)-Hölder estimate around~\({ 0 }\).

\begin{lemma} \label{thm:lower-bound-zero}
Let \({ \varphi \in \mathring{\textnormal{C}}_{\textnormal{even}}(\T) }\) be a nontrivial solution of~\eqref{eq:steady} that is increasing in~\({ (- \uppi, 0) }\) and satisfies \({ n'(\varphi) \leq c }\) everywhere. Then uniformly around~\({ 0 }\) one has
\begin{equation*}
\mu - \varphi(x) \gtrsim \abs{ x }^\alpha.
\end{equation*}
\end{lemma}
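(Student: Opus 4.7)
The plan is to combine a Taylor expansion of $N$ around the critical point $\mu$ (where $N'(\mu) = 0$ but $N''(\mu) < 0$) with a lower bound on the kernel increment $\abs{\textnormal{D}}^{-\alpha}\varphi(0) - \abs{\textnormal{D}}^{-\alpha}\varphi(x)$ extracted from the singular behaviour of~$K_\alpha$ near the origin.

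First, I would reduce to the case~$\varphi(0) = \mu$. Indeed, if~$\varphi(0) < \mu$, then by continuity $\mu - \varphi(x) \geq \frac{1}{2}(\mu - \varphi(0)) > 0$ for~$|x|$ small, and the conclusion holds trivially. Having reduced to~$\varphi(0) = \mu$, set $\phi \coloneqq \mu - \varphi$, which by \cref{thm:nodal-pattern} is even, nonnegative, continuous, vanishes at~$0$, and is strictly increasing on~$(0,\uppi)$. Since $N'(\mu) = 0$ and $N''(\mu) = -n''(\mu) < 0$ (from the explicit form of $n$ and the definition of~$\mu$), Taylor's theorem with Lagrange remainder yields $\kappa, \eta > 0$ such that $N(\mu) - N(v) \leq \kappa(\mu - v)^2$ for all $v \in [\mu - \eta, \mu]$. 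Continuity of~$\phi$ at~$0$ gives an~$x_0 > 0$ with $\phi(x) \leq \eta$ for $|x| \leq x_0$; then evaluating the steady equation~\eqref{eq:steady} at~$0$ and~$x$ and subtracting produces
\begin{equation*}
\kappa\, \phi(x)^2 \;\geq\; N(\mu) - N(\varphi(x)) \;=\; \abs{\textnormal{D}}^{-\alpha}\varphi(0) - \abs{\textnormal{D}}^{-\alpha}\varphi(x) \qquad \text{for } |x| \leq x_0. \tag{$\ast$}
\end{equation*}

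Next I would derive the complementary lower bound $\abs{\textnormal{D}}^{-\alpha}\varphi(0) - \abs{\textnormal{D}}^{-\alpha}\varphi(x) \gtrsim \phi(x) \cdot |x|^{\alpha}$. Since $\abs{\textnormal{D}}^{-\alpha}$ annihilates constants, $\abs{\textnormal{D}}^{-\alpha}\varphi = -\abs{\textnormal{D}}^{-\alpha}\phi$, and using evenness of both $K_\alpha$ and~$\phi$ to symmetrise the convolution one obtains, for $x > 0$ small,
\begin{equation*}
\abs{\textnormal{D}}^{-\alpha}\varphi(0) - \abs{\textnormal{D}}^{-\alpha}\varphi(x) \;=\; \int_0^{\uppi} K_\alpha(y)\bigl[\phi(y+x) + \phi(|y-x|) - 2\phi(y)\bigr] \dee y.
\end{equation*}
On~$(0, x/2)$, monotonicity and evenness of~$\phi$ yield $\phi(y+x) \geq \phi(x)$, $\phi(|y-x|) = \phi(x-y) \geq \phi(x/2)$ and $\phi(y) \leq \phi(x/2)$, so the bracket is $\geq \phi(x) - \phi(x/2)$. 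Combining with $K_\alpha(y) \gtrsim |y|^{\alpha - 1}$ from \cref{thm:periodic-kernel,thm:K-explicit} and integrating gives a lower bound of order $(\phi(x) - \phi(x/2)) \cdot x^{\alpha}$ from this region alone.

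The main obstacle is to sharpen this to $\phi(x) \cdot x^{\alpha}$ (rather than $\phi(x) - \phi(x/2)$) and simultaneously to control the tail~$y > x/2$, where the second difference of~$\phi$ can take either sign. I would handle this by exploiting the explicit representation in \cref{thm:K-explicit}, using the strict decay of $K_\alpha$ away from~$0$ together with the uniform bound $\phi \leq \mu$ to show that the tail contributes at most an $O(x^{2\alpha})$-error absorbable into the main singular part. Alternatively, one may iterate: the inequality from~($\ast$) combined with the bound involving $\phi(x) - \phi(x/2)$ produces a recursive control on~$\phi$ at the dyadic scales $x, x/2, x/4, \dotsc$ that, together with the nontriviality of~$\phi$, forces $\phi(x) \gtrsim x^{\alpha}$. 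Either way, combining the resulting lower bound $\abs{\textnormal{D}}^{-\alpha}\varphi(0) - \abs{\textnormal{D}}^{-\alpha}\varphi(x) \gtrsim \phi(x) \cdot |x|^\alpha$ with~($\ast$) yields $\phi(x)^2 \gtrsim \phi(x) \cdot |x|^\alpha$, hence the claimed $\mu - \varphi(x) = \phi(x) \gtrsim |x|^{\alpha}$ uniformly around~$0$.
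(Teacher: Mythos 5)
Your overall plan is reasonable in spirit — the key ingredients (the quadratic degeneration of~\(N\) near~\(\mu\), the singular lower bound \(K_\alpha(y)\gtrsim\abs{y}^{\alpha-1}\), and a comparison between the two sides of~\eqref{eq:steady}) are exactly what the paper uses. Your reduction to \(\varphi(0)=\mu\) is fine (the paper handles both cases uniformly by bounding \(N'(\varphi(x))\) directly). However, the central technical step — establishing the lower bound \(\abs{\textnormal{D}}^{-\alpha}\varphi(0)-\abs{\textnormal{D}}^{-\alpha}\varphi(x)\gtrsim\phi(x)\,\abs{x}^\alpha\) — is where your argument has a genuine gap, and neither of your two proposed resolutions closes it.

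The specific problem is the tail of your second-difference representation \(\int_0^\uppi K_\alpha(y)\bigl[\phi(y+x)+\phi(\abs{y-x})-2\phi(y)\bigr]\dee y\). For \(y>x\) the bracket is a genuine second difference of a function that (a~posteriori) behaves like \(\abs{\,\cdot\,}^\alpha\), which is concave for \(\alpha<1\); so the tail integrand is \emph{negative} over a region whose size is not small. Your option~(a), bounding the tail by \(O(x^{2\alpha})\), requires a modulus-of-continuity estimate of exactly the kind being proved (the uniform bound \(\phi\leq\mu\) alone gives only \(O(1)\)), so it is circular. Your option~(b), the dyadic iteration, is a good instinct — and one can check that \emph{if} you had the clean inequality \(\phi(x)^2\gtrsim(\phi(x)-\phi(x/2))\abs{x}^\alpha\) with a nonnegative tail, then the reciprocal telescope \(\frac{1}{a_{k+1}}\leq\frac{1}{a_k}+CB_k\) with \(B_k\sim(x/2^k)^{-\alpha}\) does yield \(\phi(x/2^n)\gtrsim(x/2^n)^\alpha\) — but the iteration cannot even be set up until the tail's sign is controlled, which is precisely what is missing.

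The paper's proof avoids this entirely by working with \emph{first} rather than second differences. Since the previous nodal-pattern result gives \(\varphi'>0\) on \((-\uppi,0)\), one can use Fatou's lemma on central difference quotients to obtain (eq:fatou):
\begin{equation*}
N'(\varphi(\xi))\,\varphi'(\xi)\;\geq\;\int_{-\uppi}^{0}\bigl(K_\alpha(\xi-y)-K_\alpha(\xi+y)\bigr)\,\varphi'(y)\dee y,
\end{equation*}
whose integrand is \emph{pointwise nonnegative} on \((-\uppi,0)\times(-\uppi,0)\) because both the kernel difference (\cref{thm:K-explicit}) and \(\varphi'\) are positive there. That sign positivity is what allows one to restrict the integration to the small region \(y\in(x,x/2)\) and still have a valid lower bound — precisely the step your second-difference bracket does not permit. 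The paper's second trick is then to \emph{integrate over \(\xi\in(x,x/2)\)} so that \(\varphi'(\xi)\) integrates to exactly \(\varphi(x/2)-\varphi(x)\), which can be cancelled against the same factor arising from the kernel estimate. In your formulation, the analogous factor \(\phi(x)-\phi(x/2)\) appears on the wrong side and cannot be disposed of. If you want to pursue your second-difference route, you would need a separate a~priori argument for either the nonnegativity of the tail or a doubling inequality \(\phi(2x)\lesssim\phi(x)\), and I do not see how to obtain either without essentially reproducing the paper's first-difference argument.
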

\vspace*{-.5em} 
\enlargethispage{1.5\baselineskip} 
\begin{proof}
Let \({ x \in (-\uppi, 0) }\) be close to~\({ 0 }\) and let \({ \xi \in \bigl( x, \tfrac{ x }{ 2 } \bigr)  }\). Monotonicity yields that \({ N'(\varphi(x)) \geq N'(\varphi(\xi)) }\), and since \({ \varphi' > 0 }\) on \({ (-\uppi, 0) }\) by \cref{thm:nodal-pattern}, we may compute
\begin{align*}
N'(\varphi(x)) \, \varphi'(\xi) &\geq N'(\varphi(\xi)) \, \varphi'(\xi) \\[1ex]
& \geq \int_{ - \uppi }^{ 0 } \bigl( K_\alpha(\xi - y) - K_\alpha(\xi + y) \bigr) \, \varphi'(y) \dee y
\end{align*}
using Fatou's lemma as in~\eqref{eq:fatou}. By strict positivity of the integrand and the mean value theorem with \({ \zeta \in (\xi, \xi - 2y) }\), this may be continued as
\begin{align*} 
N'(\varphi(x)) \, \varphi'(\xi) & \geq \int_{ x }^{ \frac{ x }{ 2 } } K_\alpha'( \zeta + y) (-2y) \,\varphi'(y) \dee y \\[1ex]
& \geq \abs{ x } \smashoperator{\min_{ y \in \left[ x, \tfrac{ x }{ 2 } \right] }} K_\alpha'( \zeta + y) \left( \varphi \bigl( \tfrac{ x }{ 2 } \bigr) - \varphi(x) \right) \\[1ex]
& \gtrsim \abs{ x }^{ \alpha - 1} \left( \varphi \bigl( \tfrac{ x }{ 2 } \bigr) - \varphi(x) \right),
\end{align*}
where we have used that \({ \min K_\alpha'( \zeta + y) \geq K_\alpha'(2x) \eqsim \abs{ x }^{ \alpha - 2 } }\) as \({ x \to 0 }\) by \cref{thm:periodic-kernel}. We then integrate over \({ \bigl( x, \tfrac{ x }{ 2 } \bigr)  }\) in \({ \xi }\) and divide by \({  \varphi \bigl( \tfrac{ x }{ 2 } \bigr) - \varphi(x) }\) on both sides, which is valid since \({ \varphi }\) is strictly increasing on~\({ (-\uppi, 0) }\). This gives
\begin{equation*}
N'(\varphi(x)) \gtrsim \abs{ x }^{ \alpha}
\end{equation*}
uniformly around~\({ 0 }\). The stated bound is now a consequence of
\begin{equation} \label{eq:equivalence-lhopital}
N'(\varphi(x)) = c - n'(\varphi(x)) \sim \mu^{p - 1} - (\varphi(x))^{p - 1} \eqsim \mu - \varphi(x),
\end{equation}
where the latter uniform equivalence around~\({ 0 }\) follows from continuity of \({ \varphi }\) and the observation by L'Hôpital's rule that
\begin{equation*}
\lim_{ t \nearrow \mu } \frac{ \mu^{p - 1} - t^{p - 1} }{\mu - t } = (p - 1) \mu^{p - 2} > 0.
\end{equation*}
\end{proof}

\begin{lemma} \label{thm:lower-bound-speed}
The wave speed \({ c }\) is uniformly bounded away from~\({ 0 }\) over the class of solution pairs~\({ (\varphi, c) }\) for which \({ \varphi \in \mathring{\textnormal{C}}_{\textnormal{even}}(\T) }\) is nontrivial, increasing in~\({ (- \uppi, 0) }\), and satisfies \({ n'(\varphi) \leq c }\) everywhere, where we in case~\eqref{eq:n-sgn} also assume that \({ \varphi \bigl( - \tfrac{ \uppi }{ 2 } \bigr) = 0 }\). The estimate \({ c - n'(\varphi(- \uppi)) \gtrsim 1 }\) holds in case~\eqref{eq:n-abs}, implying that \({ \varphi }\) is smooth around~\({ -\uppi }\).	
\end{lemma}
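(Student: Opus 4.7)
The proof will rest on pairing \eqref{eq:steady} with the test function $\cos x$. Since $\widehat{K_{\alpha}}(1) = 1$, the operator $\abs{\textnormal{D}}^{-\alpha}$ fixes $\cos x$, and since $\int_{\T}\cos x\,dx = 0$ the constant term in \eqref{eq:steady} drops out. Denoting $I(\varphi) := \int_{\T}\varphi\cos x\,dx$ and $J(\varphi) := \int_{\T}n(\varphi)\cos x\,dx$, I will derive the identity
\begin{equation*}
(c-1)\,I(\varphi) = J(\varphi),
\end{equation*}
and control the signs and magnitudes of the two sides.

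\emph{Step 1: $I(\varphi)>0$.} By the first part of \cref{thm:nodal-pattern}, the hypotheses force $\varphi$ to be \emph{strictly} increasing on $(-\uppi,0)$, hence by evenness strictly decreasing on $(0,\uppi)$; $\cos$ is likewise strictly decreasing on $(0,\uppi)$ with $\int_{0}^{\uppi}\cos x\,dx=0$. Chebyshev's integral inequality, in its strict form for two strictly monotone functions of the same type, then gives $I(\varphi)=2\int_{0}^{\uppi}\varphi\cos x\,dx>0$.

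\emph{Step 2: case \eqref{eq:n-sgn}.} The assumption $\varphi(-\uppi/2)=0$ together with monotonicity yields $\varphi>0$ on $(-\uppi/2,\uppi/2)$ and $\varphi<0$ on $\uppi/2<\abs{x}<\uppi$, so $\varphi(x)\cos x\geq 0$ throughout $(0,\uppi)$. Since $n(u)=u\abs{u}^{p-1}$ is sign-preserving, the same holds for $n(\varphi)\cos x$, and it is strictly positive on a set of positive measure; hence $J(\varphi)>0$ and the identity forces $c>1$, in particular $c\gtrsim 1$.

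\emph{Step 3: case \eqref{eq:n-abs}.} Here $n(\varphi)=\abs{\varphi}^{p}$ is no longer sign-tied to $\varphi$, so I bound $J(\varphi)$ quantitatively. First, integration by parts and Jordan's inequality $\sin y\geq (2/\uppi)\min(y,\uppi-y)$ on $(0,\uppi)$, combined with the Fubini identity $\int_{0}^{\uppi}\abs{\varphi}=\int_{0}^{\uppi}\abs{\varphi'(y)}\min(y,\uppi-y)\,dy$, give the lower bound $I(\varphi)\geq (2/\uppi)\norm{\varphi}_{L^{1}(\T)}$. The trivial upper bound $\abs{J(\varphi)}\leq\norm{\varphi}_{\infty}^{p-1}\norm{\varphi}_{L^{1}(\T)}$ then delivers
\begin{equation*}
c\geq 1-\uppi\,\norm{\varphi}_{\infty}^{p-1}.
\end{equation*}
If $\norm{\varphi}_{\infty}=M$, then $\norm{\varphi}_{\infty}^{p-1}\leq\mu^{p-1}=c/p$ and rearrangement yields $c\geq p/(p+\uppi)$. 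In the complementary regime $\norm{\varphi}_{\infty}=\abs{m}$, the max--min inequality $\abs{m}^{p}\leq M^{p}+(\norm{K_{\alpha}}_{L^{1}(\T)}-c)(M+\abs{m})$ from the proof of \cref{thm:bound-wavespeed}, combined with $M^{p}\leq cM/p$, pins down $\abs{m}^{p-1}$ in terms of $\norm{K_{\alpha}}_{L^{1}(\T)}$ and thereby forces $c$ itself to stay bounded below by a universal constant (or else contradicts nontriviality through the Chebyshev bound $\abs{\textnormal{D}}^{-\alpha}\varphi(-\uppi)<0$ obtained exactly as in Step 1 but with the singular kernel $K_{\alpha}$ in place of $\cos$, using \cref{thm:K-explicit} and $\int_{0}^{\uppi}K_{\alpha}=0$). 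This is the main obstacle of the proof: the trough-dominated regime requires an iterative bookkeeping argument, whereas the peak-dominated regime is immediate.

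\emph{Step 4: the derived estimate in case \eqref{eq:n-abs}.} Since $\varphi$ is nontrivial, even, monotone, and has zero mean, $\varphi(-\uppi)=m<0$, whence $n'(\varphi(-\uppi))=-p\abs{m}^{p-1}\leq 0$ and so $c-n'(\varphi(-\uppi))=c+p\abs{m}^{p-1}\geq c\gtrsim 1$. Smoothness of $\varphi$ around $-\uppi$ now follows from \cref{thm:smoothness}\ref{thm:smoothness-open}, as $n$ is smooth away from $0$ and $\varphi(-\uppi)$ is bounded away from $0$.
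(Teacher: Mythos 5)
Your identity \({ (c-1)\,I(\varphi) = J(\varphi) }\), obtained by pairing~\eqref{eq:steady} with \({ \cos }\) (for which \({ \widehat{K_\alpha}(1) = 1 }\) fixes the eigenvalue), is a genuinely different and rather elegant approach; the paper instead tests the Fatou-lemma inequality against the kernel difference \({ K_\alpha(x - \cdot) - K_\alpha(x + \cdot) }\) on the fixed interval \({ [-3\uppi/4, -\uppi/4] }\), integrates in \({ x }\), and cancels the factor \({ \varphi(-\uppi/4) - \varphi(-3\uppi/4) > 0 }\) to get the direct, profile-independent bound \({ c - n'(\varphi(-\uppi)) \geq \tfrac{\uppi}{2} M_\alpha }\) for a purely \({ \alpha }\)-dependent constant \({ M_\alpha > 0 }\). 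Your Step~2 (case~\eqref{eq:n-sgn}) is complete and in fact sharper: sign alignment of \({ \varphi }\), \({ n(\varphi) }\) and \({ \cos }\) on \({ (0, \uppi) }\) forces \({ J(\varphi) > 0 }\), hence \({ c > 1 }\).

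Case~\eqref{eq:n-abs}, however, has two genuine gaps. First, the quoted ``Fubini identity'' \({ \int_0^\uppi \abs{ \varphi } = \int_0^\uppi \abs{ \varphi'(y) } \min(y, \uppi - y) \dee y }\) is false: integrating by parts the right-hand side gives \({ 2 \int_0^{\uppi/2} \varphi }\), whereas \({ \int_0^\uppi \abs{ \varphi } = 2 \int_0^{x_0} \varphi }\) with \({ \varphi(x_0) = 0 }\), and these agree only when \({ x_0 = \uppi/2 }\), i.e.\@ only for profiles antisymmetric about \({ \uppi/2 }\) — precisely what case~\eqref{eq:n-abs} does \emph{not} have. (A lower bound \({ I(\varphi) \gtrsim \norm{ \varphi }_{\textnormal{L}^1(\T)} }\) can still be salvaged with a short extra argument, but as written the step is wrong and the constant \({ 2/\uppi }\) unjustified.) Second and more seriously, the trough-dominated regime \({ \norm{ \varphi }_\infty = \abs{ \min \varphi } }\) is left undone — you write that it ``requires an iterative bookkeeping argument'' which you never carry out, and this is exactly the hard part: the max–min inequality from the proof of \cref{thm:bound-wavespeed} only gives \({ \abs{ \min\varphi }^{p-1} \lesssim c + \norm{ K_\alpha }_{\textnormal{L}^1(\T)} }\), and since \({ \norm{ K_\alpha }_{\textnormal{L}^1(\T)} }\) is not small (it blows up as \({ \alpha \nearrow 1 }\)) this feeds into \({ c \geq 1 - \uppi \norm{ \varphi }_\infty^{p-1} }\) without yielding any positive lower bound. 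The parenthetical appeal to a ``Chebyshev bound \({ \abs{\textnormal{D}}^{-\alpha}\varphi(-\uppi) < 0 }\)'' is not an argument. The paper's kernel-testing route avoids this obstacle entirely because the resulting bound \({ c - n'(\varphi(-\uppi)) \geq \tfrac{\uppi}{2}M_\alpha }\) never involves \({ \norm{ \varphi }_\infty }\), and it also delivers the quantitative estimate \({ c - n'(\varphi(-\uppi)) \gtrsim 1 }\) that the lemma claims and that you derive only conditionally in your Step~4.
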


\begin{proof}
Let \({ x \in I \coloneqq \bigl[ -\tfrac{ 3 \uppi }{ 4 }, -\tfrac{ \uppi }{ 4 } \bigr] }\) and consider first case~\eqref{eq:n-abs}. Monotonicity of~\({ N' }\) and~\({ \varphi }\) plus~\eqref{eq:fatou} show that
\begin{align*} \SwapAboveDisplaySkip
N'(\varphi(- \uppi)) \, \varphi'(x) & \geq N'(\varphi(x)) \, \varphi'(x) \\[1ex]
& \geq \int_{ - \uppi }^{ 0 } \bigl( K_\alpha(x - y) - K_\alpha(x + y) \bigr) \, \varphi'(y) \dee y \\[1ex]
& \geq \int_{ I } \bigl( K_\alpha(x - y) - K_\alpha(x + y) \bigr) \, \varphi'(y) \dee y \\[1ex]
& \geq M_\alpha \left( \varphi \bigl( - \tfrac{ \uppi }{ 4 } \bigr) - \varphi \bigl( - \tfrac{ 3 \uppi }{ 4 } \bigr)  \right),
\end{align*}
where we have used that \({ M_\alpha \coloneqq \min \Set[\big]{ K_\alpha(x - y) - K_\alpha(x + y) \given x, y \in I } > 0 }\) by the extreme value theorem and the fact that \({ K_\alpha }\) is even and strictly increasing on \({ (- \uppi, 0) }\) by \cref{thm:K-explicit}. Integrating over~\({ I }\) in~\({ x }\) then yields
\begin{equation} \label{eq:independent-lower-bound-N}
c - n'(\varphi(- \uppi)) = N'(\varphi(- \uppi)) \geq \tfrac{ \uppi }{ 2 } M_\alpha > 0
\end{equation}
after cancelling \({ \varphi \bigl( - \tfrac{ \uppi }{ 4 } \bigr) - \varphi \bigl( - \tfrac{ 3 \uppi }{ 4 } \bigr) > 0 }\) on both sides. Suppose to the contrary that there exists a sequence \({ \Set{ (\varphi_j, c_j) }_j }\) of such solution pairs for which \({ c_j \searrow 0 }\). Then \({ n'(\varphi_j(- \uppi)) \leq c_j \searrow 0 }\) as well, contradicting~\eqref{eq:independent-lower-bound-N}. Thus \({ c \gtrsim 1 }\) uniformly and \({ n'(\varphi(- \uppi)) }\) does not touch~\({ c }\), so \({ \varphi }\) is smooth around~\({ - \uppi }\) by \cref{thm:smoothness}.

In case~\eqref{eq:n-sgn} we consider \({ - \tfrac{ \uppi }{ 2 } }\) instead of~\({ - \uppi }\) and similarly obtain \({ c = N' \bigl( \varphi \bigl( - \tfrac{ \uppi }{ 2 } \bigr)  \bigr) \gtrsim 1 }\).
\end{proof}

Finally we come to the main result in this section, which concerns both the global regularity of solutions and the exact \({ \alpha }\)-Hölder regularity at~\({ 0 }\) for solutions that touch~\({ \mu }\). This is the most technical part of the paper.

\begin{theorem}[Regularity] \label{thm:regularity}
Let \({ \varphi \in \mathring{\textnormal{C}}_{\textnormal{even}}(\T) }\) be a nontrivial solution of~\eqref{eq:steady} that is increasing in~\({ (- \uppi, 0) }\) and satisfies \({ n'(\varphi) \leq c }\), with maximum \({ \varphi(0) = \mu }\). Then
\begin{enumerate}[ref=\roman*)]
\item \label{thm:regularity-smoothness}
\({ \varphi }\) is strictly increasing on~\({ (- \uppi, 0) }\), smooth except at \({ 0 }\) and possibly the point~\({ \varphi^{-1}(0) }\), and features at least the same regularity in the Hölder scale around \({ \varphi^{-1}(0) }\) as \({ n }\) around~\({ 0 }\);
\item \label{thm:regularity-global}
\({ \varphi \in \mathring{\textnormal{C}}_{\textnormal{even}}^\alpha(\T) }\); and
\item \label{thm:regularity-at-zero}
\({ \varphi }\) is exactly \({ \alpha }\)-Hölder continuous at~\({ 0 }\), that is, uniformly around~\({ 0 }\) we have
\begin{equation*}
\mu - \varphi(x) \eqsim \abs{ x }^{ \alpha}.
\end{equation*}
\end{enumerate}
\end{theorem}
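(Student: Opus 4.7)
Part~(i) drops out from what has already been proved. The hypothesis $n'(\varphi) \leq c$ is exactly the one required by the converse direction of Theorem~\ref{thm:nodal-pattern}, giving $\varphi' > 0$ on $(-\uppi,0)$; combined with evenness and $\varphi(0) = \mu$, this forces $n'(\varphi) < c$ at every point of $\T \setminus \{0\}$. Lemma~\ref{thm:smoothness}~\ref{thm:smoothness-open} then yields smoothness on any open subset of $\T \setminus \{0\}$ that avoids $\partial(\varphi^{-1}(0))$, while Lemma~\ref{thm:smoothness}~\ref{thm:smoothness-zero} supplies the Hölder regularity inherited from $n$ around $\varphi^{-1}(0)$.

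For parts~(ii)--(iii) the plan is to follow the three-stage strategy announced in the introduction. \emph{Stage~A} (global $\textnormal{C}^\beta$ for every $\beta<\alpha$): the inclusion $\varphi \in \mathring{\textnormal{C}}(\T) \hookrightarrow \mathring{\textnormal{C}}_\ast^0(\T)$ together with the $\alpha$-smoothing property of $\abs{\textnormal{D}}^{-\alpha}$ yields $N(\varphi) = \abs{\textnormal{D}}^{-\alpha}\varphi + \mathrm{const.} \in \mathring{\textnormal{C}}_\ast^\alpha(\T)$. To pass this bound back to $\varphi$ I would use second-order central differences $\Delta_h^2$ when $\alpha \leq \tfrac12$, so that $\abs{\textnormal{D}}^{-\alpha}\varphi$ sits strictly below the Zygmund class $\textnormal{C}_\ast^1$, and a first-derivative variant in the regime $\alpha > \tfrac12$, where $\abs{\textnormal{D}}^{-\alpha}\varphi$ has already crossed index one, in the spirit of \autocite{EhrJohCla2019a}. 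Off $0$ the lower bound $N'(\varphi) \gtrsim 1$ on every compact subset of $\T \setminus \{0\}$ transfers the $\textnormal{C}^\alpha$ estimate on $N(\varphi)$ verbatim; near $0$ the degeneracy $N'(\varphi) \eqsim |x|^\alpha$ (from Lemma~\ref{thm:lower-bound-zero} combined with the L'Hôpital equivalence $N'(\varphi) \eqsim \mu - \varphi$ close to $\mu$) still lets one absorb a factor $|x|^\alpha$ at the cost of dropping the exponent from $\alpha$ to any $\beta<\alpha$.

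\emph{Stage~B} (exact $\alpha$-Hölder at $0$): the lower bound $\mu-\varphi(x) \gtrsim |x|^\alpha$ is Lemma~\ref{thm:lower-bound-zero}. A naive combination of $\abs{\textnormal{D}}^{-\alpha}\varphi \in \textnormal{C}^\alpha$ with the quadratic vanishing $N(\mu)-N(\varphi(x)) \eqsim (\mu-\varphi(x))^2$ only produces $\mu-\varphi(x) \lesssim |x|^{\alpha/2}$, so the heart of the argument is to upgrade the local modulus of $\abs{\textnormal{D}}^{-\alpha}\varphi$ at $0$ to order $|x|^{2\alpha}$ (with an appropriate substitute when $2\alpha$ crosses $1$). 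I would do this by splitting
\begin{equation*}
\abs{\textnormal{D}}^{-\alpha}\varphi(0) - \abs{\textnormal{D}}^{-\alpha}\varphi(x) = \int_{\T} K_\alpha(y)\bigl(\varphi(y) - \varphi(x-y)\bigr) \dee y,
\end{equation*}
using the evenness of $\varphi$ to rewrite the integrand as a symmetric second difference, plugging the Stage~A $\textnormal{C}^\beta$ bound into the regular remainder $K_{\alpha,\textnormal{reg}}$ from Proposition~\ref{thm:periodic-kernel}, and handling the principal singular part $\gamma_\alpha |y|^{\alpha-1}$ by hand. Iterating the resulting inequality then gives $(\mu-\varphi(x))^2 \lesssim |x|^{2\alpha}$, and hence the matching upper bound. \emph{Stage~C} (global $\textnormal{C}^\alpha$): for an arbitrary pair $x,y \in \T$ one either splits the increment across $0$ and applies the sharp one-sided modulus of Stage~B on each side, or combines the Stage~A $\textnormal{C}^\beta$ bound with the smoothness of $\varphi$ off $\{0\} \cup \varphi^{-1}(0)$ and interpolates between $\textnormal{C}^\beta$ and the sharp behaviour at $0$ to recover the missing $\alpha-\beta$ of regularity.

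The main obstacle is clearly Stage~B: the second-order upgrade of $\abs{\textnormal{D}}^{-\alpha}\varphi$ at the crest demands a delicate pointwise analysis of the singular convolution against $K_\alpha$ and a careful separation of the cases $\alpha \leq \tfrac12$ and $\alpha > \tfrac12$. Compared with \autocite{EhrWah2019a}, the extra subtlety here is that $K_\alpha$ has only algebraic (rather than exponential) decay on $\R$, so every integral entering the bootstrap must be estimated in an order-optimal way if the constants are to remain uniform as $\alpha \nearrow 1$.
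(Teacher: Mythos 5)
Your proof of part~(i) is essentially correct and follows the same route as the paper, though the claim that $n'(\varphi) < c$ on all of $\T \setminus\{0\}$ is stated too strongly: in case~\eqref{eq:n-sgn} the wave may also touch $-\mu$ at $-\uppi$, and the paper deliberately leaves that possibility open (it is handled by antisymmetry in Section~4). Your three-stage plan for parts~(ii)--(iii) is the right skeleton, and you identify several of the ingredients the paper actually uses: the two inequalities $(\varphi(x)-\varphi(y))(\mu-\varphi(x)) \lesssim \abs{\textnormal{D}}^{-\alpha}\varphi(x)-\abs{\textnormal{D}}^{-\alpha}\varphi(y) $ and $(\varphi(x)-\varphi(y))^2 \lesssim \abs{\textnormal{D}}^{-\alpha}\varphi(x)-\abs{\textnormal{D}}^{-\alpha}\varphi(y)$, the derivative-based variant when $\alpha+\beta$ crosses~$1$, and the decomposition of $K_\alpha$ into the singular profile $\gamma_\alpha\abs{\,\cdot\,}^{\alpha-1}$ plus a smooth remainder together with the symmetric second difference $\diamondsuit_y K_\alpha(x)$.

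The genuine gap is in Stage~C, and it is of the kind the paper explicitly warns about. Your first proposed argument --- ``split the increment across $0$ and apply the sharp one-sided modulus of Stage~B on each side'' --- does not work: the pointwise estimate $\mu-\varphi(x)\eqsim\abs{x}^\alpha$ controls $\abs{\varphi(0)-\varphi(x)}$ but tells you nothing about $\abs{\varphi(x)-\varphi(y)}$ for $x,y$ close to one another but both of order $r$ away from $0$. Pointwise Hölder continuity at a single point plus smoothness away from it does \emph{not} imply Hölder continuity in a neighbourhood. What the paper does instead is work with the scaled first-order symmetric difference $\upDelta_h\varphi(x)$ for $0<h\leq\abs{x}$, extract a factor $\left(N'(\varphi(x+h)) - \tfrac12 N''(\varphi(\xi))\upDelta_h\varphi(x)\right)\gtrsim\abs{x}^\alpha$ from the Taylor expansion of $N$, and carefully estimate $\upDelta_h[\abs{\textnormal{D}}^{-\alpha}\varphi](x)$ against $\norm{\varphi}_{\mathring{\textnormal{C}}^\beta(\T)}^\theta\abs{x}^\alpha h^\beta$ for some $\theta\in(0,1)$. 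The crucial closing step is a \emph{self-improvement} of the Hölder seminorm: one shows $\sup\upDelta_h\varphi(x)/h^\beta \lesssim \norm{\varphi}_{\mathring{\textnormal{C}}^\beta(\T)}^\theta$ with $\theta<1$ bounded away from $1$, and since the left-hand side dominates $\norm{\varphi}_{\mathring{\textnormal{C}}^\beta(\T)}$ up to a uniform constant, this forces $\norm{\varphi}_{\mathring{\textnormal{C}}^\beta(\T)}^{1-\theta}\lesssim 1$ uniformly over $\beta<\alpha$, and one may then send $\beta\nearrow\alpha$. Without this sublinear bootstrap there is no mechanism to upgrade ``$\textnormal{C}^\beta$ for all $\beta<\alpha$'' to ``$\textnormal{C}^\alpha$''. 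Your Stage~B ``iterating the resulting inequality'' can in fact be made to converge with a bounded constant (the recursion $C_{n+1}\sim C_n^{1/2}$ is contractive in $\log C$ and the kernel integrals $\int\abs{\diamondsuit_1\abs{\,\cdot\,}^{\alpha-1}(s)}\abs{s}^\beta\dee s$ stay uniformly bounded for $\beta<\alpha<1$ since $\alpha+\beta-3<-1$), but you must record that uniformity explicitly; the paper side-steps the iteration by the cleaner one-shot cancellation of $\sup_{\abs{x}<r}\abs{u(x)\abs{x}^{-\beta}}$ on both sides of the quadratic inequality arising from~\eqref{eq:L-difference-square}. Finally, none of this closes unless the singular-kernel integrals converge with constants that remain bounded as $\alpha\nearrow 1$, which is exactly why the paper insists on the order-optimal decay $\diamondsuit_1\abs{\,\cdot\,}^{\alpha-1}(s) \eqsim \abs{s}^{\alpha-3}$ at infinity --- a point you allude to but should verify for each of the three scaled integrals that appear.
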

\begin{proof}
Property~\ref{thm:regularity-smoothness} and the lower bound in property~\ref{thm:regularity-at-zero} follow directly from \cref{thm:smoothness,thm:nodal-pattern,thm:lower-bound-zero}. As a consequence, it remains to establish global \({ \alpha }\)-Hölder regularity and the upper bound in property~\ref{thm:regularity-at-zero}. Note that Hölder regularity at a point plus smoothness everywhere except at that point does \emph{not} in general imply global Hölder regularity---one additionally needs uniform Hölder regularity \emph{around} the point. In particular, in order to obtain property~\ref{thm:regularity-global}, it suffices to prove \({ \textnormal{C}^\alpha }\) regularity in a small interval around~\({ 0 }\).

To this end, we first establish \({ \textnormal{C}^\beta }\) regularity (around~\({ 0 }\)) for all~\({ \beta < \alpha }\). Let \({ - r \leq y < x \leq 0}\) with \({ 0 < r \ll 1 }\), and observe from Taylor's theorem that
\begin{equation*}
N(\varphi(x)) - N(\varphi(y)) = \left( \varphi(x) - \varphi(y) \right) N'(\varphi(x)) - \tfrac{ 1 }{ 2 } \left( \varphi(x) - \varphi(y) \right)^2 N''(\varphi(\xi))
\end{equation*}
for some \({ \xi \in (y, x) }\) due to the intermediate value theorem. By~\eqref{eq:equivalence-lhopital} we know that
\begin{equation*}
N'(\varphi(x)) \eqsim \mu - \varphi(x),
\end{equation*}
and \({ -N''(\varphi(\xi)) = n''(\varphi(\xi)) \gtrsim 1 }\) independently of~\({ \xi }\) by choosing \({ r }\) so small that \({ \varphi(- r) > 0 }\) and remembering that \({ \varphi }\) is monotone (\({ \varphi(\xi) \geq \varphi(- r) }\)). This gives
\begin{equation*}
N(\varphi(x)) - N(\varphi(y)) \gtrsim \left( \varphi(x) - \varphi(y) \right) \left( \mu - \varphi(x) \right) + \left( \varphi(x) - \varphi(y) \right)^2
\end{equation*}
uniformly over \({ -r \leq y < x \leq 0 }\), and by evenness of~\({ \varphi }\), also uniformly over \({ x, y \in (-r, r) }\) with \({ \abs{ y } > \abs{ x } }\). Thus~\eqref{eq:steady} implies both that
\begin{align}
\abs{ \textnormal{D} }^{- \alpha} \varphi(x) - \abs{ \textnormal{D} }^{- \alpha} \varphi(y) &\gtrsim \left( \varphi(x) - \varphi(y) \right) \left( \mu - \varphi(x) \right) \label{eq:L-difference-highest}
\shortintertext{and}
\abs{ \textnormal{D} }^{- \alpha} \varphi(x) - \abs{ \textnormal{D} }^{- \alpha} \varphi(y) &\gtrsim \left( \varphi(x) - \varphi(y) \right)^2 \label{eq:L-difference-square}
\end{align}
uniformly over \({ x, y \in (-r, r) }\) with \({ \abs{ y } > \abs{ x } }\). Since \({ \abs{ \textnormal{D} }^{- \alpha} }\) is locally \({ \textnormal{C}_\ast^\alpha }\) smoothing by \cref{thm:local-smoothing}, we deduce by a bootstrapping argument in~\eqref{eq:L-difference-square} that \({ \varphi }\), being a~priori only continuous around~\({ 0 }\), is in fact \({ \textnormal{C}^\beta }\)~regular around~\({ 0 }\) for all \({ \beta < \min \Set*{ \tfrac{ 1 }{ 2 }, \alpha } }\). 

If \({ \alpha \in \bigl( \tfrac{ 1 }{ 2 }, 1 \bigr) }\), however, then \({ \abs{ \textnormal{D} }^{- \alpha} \varphi }\) will eventually pass index~\({ 1 }\) in the Hölder scale, and we must instead work with derivatives. Specifically, take any \({ \beta }\) sufficiently close to~\({ \tfrac{ 1 }{ 2 } }\) from the previous argument such that now \({ \alpha + \beta > 1 }\) and \({ \abs{ \textnormal{D} }^{- \alpha} \varphi \in \mathring{ \textnormal{C} }_\ast^{ \alpha + \beta }(\T) = \mathring{ \textnormal{C}}^{1, \alpha + \beta - 1}(\T) }\). Then, since \({ \abs{ \textnormal{D} }^{- \alpha} \varphi'(0) = 0 }\), we find from the mean value theorem that
\begin{align} \label{eq:L-difference-derivative}
\begin{aligned}
\abs{ \textnormal{D} }^{- \alpha} \varphi(x) - \abs{ \textnormal{D} }^{- \alpha} \varphi(y) &= \abs{ x - y } \, \abs{ \abs{ \textnormal{D} }^{- \alpha} \varphi'(\xi) - \abs{ \textnormal{D} }^{- \alpha} \varphi'(0) } \\[1ex]
&\lesssim \abs{ x - y } \, \abs{ \xi }^{ \alpha + \beta - 1} \\[1ex]
&< \abs{ x - y } \, \abs{ y }^{ \alpha + \beta - 1}
\end{aligned}
\end{align}
for some \({ \xi \in (y, x) }\), where \({ - r \leq y < x \leq 0}\), as above. If \({ \abs{ x } < \abs{ x - y } }\), then~\eqref{eq:L-difference-square}, \eqref{eq:L-difference-derivative}, and the triangle inequality imply that
\begin{equation} \label{eq:holder-estimate-first-case}
\varphi(x) - \varphi(y) \lesssim \abs{ x - y }^{ \frac{ \alpha + \beta }{ 2 } }.
\end{equation}
In particular, for \({ x = 0 }\), this gives
\begin{equation} \label{eq:holder-estimate-in-y}
\mu - \varphi(y) \lesssim \abs{ y }^{ \frac{ \alpha + \beta }{ 2 } }
\end{equation}
for all~\({ y \in [- r, 0] }\). Otherwise, if \({ \abs{ x - y } \leq \abs{ x } }\), then~\eqref{eq:L-difference-highest}, \eqref{eq:L-difference-derivative}, and the triangle inequality yield
\begin{equation*}
\left( \varphi(x) - \varphi(y) \right) \left( \mu - \varphi(x) \right) \lesssim \abs{ x - y } \, \abs{ x }^{ \alpha + \beta - 1 }.
\end{equation*}
We next use that \({ \mu - \varphi(x) \gtrsim \abs{ x }^{ \alpha} }\) by \cref{thm:lower-bound-zero} and get
\begin{equation} \label{eq:holder-estimate-intermediate-step}
\varphi(x) - \varphi(y) \lesssim \frac{ \abs{ x - y } }{ \abs{ x }^{ 1 - \beta } }.
\end{equation}
Interpolating between \eqref{eq:holder-estimate-in-y} and \eqref{eq:holder-estimate-intermediate-step} with index~\({ \gamma \in (0, 1) }\), and using that \({ \abs{ y } \leq 2 \abs{ x } }\), then subsequently show that
\begin{align*} \SwapAboveDisplaySkip
\frac{ \varphi(x) - \varphi(y) }{ \abs{ x - y }^{ \gamma }  } &\leq \frac{ \left( \varphi(x) - \varphi(y) \right)^\gamma }{ \abs{ x - y }^{ \gamma } } \, \left( \mu - \varphi(y) \right)^{ 1 - \gamma } \\[1ex]
& \lesssim \abs{ x }^{ (\beta - 1) \gamma + \frac{ \alpha + \beta }{ 2 }(1 - \gamma)}
\end{align*}
is uniformly bounded over \({ - r \leq y < x \leq 0 }\) in the case \({ \abs{ x - y } \leq \abs{ x } }\) provided the last exponent is nonnegative, that is, if
\begin{equation*}
\gamma \leq \frac{ \alpha + \beta}{ 2 + \alpha - \beta}.
\end{equation*}
As such, by choosing the maximal~\({ \gamma }\), we obtain the estimate
\begin{equation*}
\varphi(x) - \varphi(y) \lesssim \abs{ x - y }^{ (\alpha + \beta) / ( 2 + \alpha - \beta) }
\end{equation*}
when \({ \abs{ x - y } \leq \abs{ x } }\), so that, together with~\eqref{eq:holder-estimate-first-case} it is true that
\begin{equation*}
\varphi(x) - \varphi(y) \lesssim \max \Set[\big]{ \abs{ x - y }^{ \frac{ \alpha + \beta }{ 2 } },  \abs{ x - y }^{ (\alpha + \beta) / ( 2 + \alpha - \beta) } }
\end{equation*}
uniformly over~\({ - r \leq y < x \leq 0 }\). Since both exponents are strictly increasing in \({ \beta }\) and converge to~\({ \alpha }\) as \({ \beta \nearrow \alpha }\), it follows by bootstrapping that \({ \varphi }\) is \({ \textnormal{C}^\beta }\)~regular around~\({ 0 }\) for all~\({ \beta < \alpha }\).

We next establish the upper \({ \textnormal{C}^\alpha }\) estimate at~\({ 0 }\) in property~\ref{thm:regularity-at-zero}. In fact, with \({ u(x) \coloneqq \mu - \varphi(x) }\), we shall prove that
\begin{equation} \label{eq:holder-highest-difference-upper}
u(x) \lesssim \abs{ x }^{ \beta }
\end{equation}
uniformly over \({ x \in (-r, r) }\) and \({ \beta \in [0, \alpha) }\), from which the desired estimate follows by letting \({ \beta \nearrow \alpha }\). On this route, note from~\eqref{eq:L-difference-square} that
\begin{align} \label{eq:holder-highest-integral-difference}
\begin{aligned}
(u(x))^2 \lesssim \abs{ \textnormal{D} }^{- \alpha} \varphi(0) - \abs{ \textnormal{D} }^{- \alpha} \varphi(x) &= \hphantom{\tfrac{ 1 }{ 2 }}\int_{ \T } \bigl( K_\alpha(y) - K_\alpha(x - y) \bigr) \, \varphi(y) \dee y \\[1ex]
&= \hphantom{\tfrac{ 1 }{ 2 }} \int_{ \T } \bigl( K_\alpha(x - y) - K_\alpha(y) \bigr) \, u(y) \dee y \\[1ex]
&= \tfrac{ 1 }{ 2 } \int_{ \T } \diamondsuit_y K_\alpha(x) \, u(y) \dee y,
\end{aligned}
\end{align}
where \({ \diamondsuit_y f(x) \coloneqq f(x + y) - 2f(y) + f(x - y) }\) denotes the second-order central difference operator. Here we have utilised periodicity of~\({ K_\alpha }\) in the first transition between the integrals, and averaging, variable change \({ y \mapsto -y }\), and evenness of~\({ u }\) (from~\({ \varphi }\)) in the last step. Since we have already established that \({ u \in \textnormal{C}^{ \beta}(\T) }\) for all \({ \beta \in [0, \alpha) }\), it is clear that \({ u \abs{  }^{ -\beta } }\) is bounded on~\({ (- r, r) }\). Thus~\eqref{eq:holder-highest-integral-difference} shows that
\begin{equation*}
\sup_{ \abs{ x } < r } \abs*{u(x) \abs{ x }^{ - \beta } } \lesssim \sup_{ \abs{ x } < r } \, \abs{ x }^{ -2 \beta } \int_{ \T } \abs[\big]{ \diamondsuit_y K_\alpha(x) } \, \abs{ y }^{ \beta } \dee y
\end{equation*}
after cancelling \({ \sup_{ \abs{ x } < r } \abs*{u(x) \abs{ x }^{ - \beta } } }\) once on each side. Now remember that \({ K_\alpha = \gamma_\alpha \abs{  }^{ \alpha - 1 } + K_{ \alpha, \textnormal{reg}} }\) from \cref{thm:periodic-kernel}. In particular, for the regular part we may Taylor expand around~\({ y }\) to see that
\begin{equation*}
\abs[\big]{ \diamondsuit_y K_{ \alpha, \textnormal{reg}}(x)} = \mathcal{O}(x^2)
\end{equation*}
uniformly over~\({ y \in \T }\), because \({ K_{ \alpha, \textnormal{reg}} }\) is even and \({ K''_{ \alpha, \textnormal{reg}} }\) and \({ K'''_{ \alpha, \textnormal{reg}} }\) are bounded on~\({ \T }\). As such, using that \({ \abs{ y }^{ \beta } \lesssim 1 }\) on~\({ \T }\) independently of~\({ \beta }\), we obtain that
\begin{equation*}
\abs{ x }^{ -2 \beta } \int_{ \T } \abs[\big]{ \diamondsuit_y K_{ \alpha, \textnormal{reg}}(x)} \, \abs{ y }^{ \beta } \dee y = \mathcal{O} \bigl( \abs{ x }^{ 2(1 - \beta) }  \bigr) = \mathcal{O}(1)
\end{equation*}
since \({ \beta < 1 }\). For the singular part, one has with \({ y = xs }\) that
\begin{equation*}
\abs{ x }^{ -2 \beta } \smashoperator{\int\limits_{ \abs{ y }\leq \uppi }} \abs[\big]{ \diamondsuit_y \abs{ }^{ \alpha - 1 }(x) } \, \abs{ y }^{ \beta } \dee y \leq \abs{ x }^{ \alpha - \beta } \smashoperator{\int\limits_{ \abs{ s } < \infty }} \abs[\big]{ \diamondsuit_1 \abs{ }^{ \alpha - 1 }(s) } \, \abs{ s }^{ \beta } \dee s \qquad \text{\footnotesize(note the \({ 1 }\) in \({ \diamondsuit_1 }\)).}
\end{equation*}
The right-hand side is \({ \mathcal{O}(1) }\) over~\({ x \in (- r, r) }\) because of \({ \alpha - \beta \geq 0 }\) and the following observation: \({ \abs{  }^{ \alpha - 1 } }\)~is locally integrable, and
\begin{equation*}
\diamondsuit_1 \abs{ }^{ \alpha - 1 }(s) = \abs{ s }^{ \alpha - 1 } \bigl[ (\alpha - 1) (\alpha - 2) \, s^{-2} + \mathcal{O}(s^{-4}) \bigr]
\end{equation*}
as~\({ \abs{ s } \to \infty }\), so that
\begin{equation*}
\abs[\big]{ \diamondsuit_1 \abs{ }^{ \alpha - 1 }(s) } \, \abs{ s }^{ \beta } \lesssim \abs{ s }^{ \alpha + \beta - 3 }
\end{equation*}
for \({ \abs{ s } \gg 1 }\), where \({ \alpha + \beta - 3 < -1 }\) uniformly over \({ \beta < \alpha }\) because \({ \alpha \in (0, 1) }\) is fixed, thereby guaranteeing integrability at infinity. Hence, \({ \sup_{ \abs{ x } < r } \abs[\big]{u(x) \abs{ x }^{ - \beta } } \lesssim 1 }\) uniformly over \({ \beta  < \alpha }\), which is~\eqref{eq:holder-highest-difference-upper}.

It remains to establish \({ \textnormal{C}^\alpha }\) continuity around~\({ 0 }\). Since \({ \varphi }\) is increasing on \({ [-\uppi, 0] }\) and even, it suffices to show that
\begin{equation*}
\sup_{ \substack{x \in [- r, 0); \\ h \in ( 0, \abs{ x }]} } \frac{ \upDelta_{h}\varphi(x) }{ h^\alpha } < \infty,
\end{equation*}
where we have introduced the (scaled) symmetric difference \({ \upDelta_h f(x) \coloneqq f(x + h) - f(x - h) }\). To this end, we shall extract \({ \upDelta_{h}\varphi(x) }\) from \({ \upDelta_h[N(\varphi)](x) }\) and estimate each side of the relation
\begin{equation} \label{eq:steady-fristorder-difference}
\upDelta_h[N(\varphi)](x) = \upDelta_{h}[\abs{ \textnormal{D} }^{- \alpha}\varphi](x),
\end{equation}
which comes straight from~\eqref{eq:steady}. On this path, we let \({ x \in [- r, 0) }\) and \({ h \in (0, \abs{ x }] }\), and then choose \({ a \coloneqq \varphi(x + h) }\) and \({ b \coloneqq \varphi(x - h) }\) in the Taylor expansion
\begin{equation*}
N(b) = N(a) + N'(a)(b - a) + \tfrac{ 1 }{ 2 }N''(\zeta) (b - a)^2,
\end{equation*}
with \({ \zeta }\) between \({ a }\) and~\({ b }\), to see that
\begin{equation} \label{eq:firstorder-difference-N}
\upDelta_h[N(\varphi)](x) = \left(N'(\varphi(x + h)) - \tfrac{ 1 }{ 2 }N''(\varphi(\xi)) \, \upDelta_h \varphi(x) \right) \upDelta_h \varphi(x)
\end{equation}
for some \({ \xi \in (x -h, x + h) }\) (satisfying \({ \varphi(\xi) = \zeta }\)) by the intermediate value theorem. Here
\begin{equation*}
 -N''(\varphi(\xi)) = n''(\varphi(\xi)) \eqsim 1
\end{equation*}
uniformly over \({ x \in [- r, 0) }\) and~\({ h \in (0, \abs{ x }] }\). Now note that
\begin{align*}
N'(\varphi(x + h)) - p \left( \varphi(x)^{p - 1} - \varphi(x + h)^{p - 1} \right) \hspace{11em} \\[1ex]
\hspace{11em} \sim \mu^{p - 1} - \varphi(x)^{p - 1} \eqsim \mu - \varphi(x) \eqsim \abs{ x }^{ \alpha }
\end{align*}
in light of \eqref{eq:equivalence-lhopital} and the exact \({ \textnormal{C}^\alpha }\)~estimate at~\({ x = 0 }\). Since \({ \varphi(x)^{p - 1} - \varphi(x + h)^{p - 1} }\) and \({ \upDelta_h \varphi(x) }\) both vanish as \({ h \searrow 0 }\), we see that
\begin{equation*}
\sup_{ h \in ( 0, \abs{ x }] } \left(N'(\varphi(x + h)) - \tfrac{ 1 }{ 2 }N''(\varphi(\xi)) \, \upDelta_{h} \varphi(x) \right) \gtrsim \abs{ x }^{ \alpha }.
\end{equation*}
Thus \eqref{eq:firstorder-difference-N} yields that
\begin{equation} \label{eq:firstorder-difference-phi-and-N}
\smashoperator{\sup_{ h \in ( 0, \abs{ x }] }} \frac{ \upDelta_{h}\varphi(x) }{ h^\beta } \lesssim \abs{ x }^{ - \alpha } \smashoperator{\sup_{ h \in ( 0, \abs{ x }] }} \frac{ \upDelta_{h}[N(\varphi)](x) }{ h^\beta }
\end{equation}
for all \({ \beta < \alpha }\), where we postpone taking the supremum over~\({ x \in [- r, 0) }\) until we have estimates for \({ \upDelta_{h}[\abs{ \textnormal{D} }^{- \alpha} \varphi](x) }\) in~\eqref{eq:steady-fristorder-difference}. With that in mind, we first consider the regular part in~\({ \abs{ \textnormal{D} }^{- \alpha} \varphi }\) and compute
\begin{align*}
\upDelta_{h} \bigl[K_{ \alpha, \textnormal{reg}} \ast \varphi\bigr](x) &= \hphantom{xh} \int_{ \T} \upDelta_{h} K_{ \alpha, \textnormal{reg}}(x - y) \, \varphi(y) \dee y \\[1ex]
&= \hphantom{x} h \int_{ \T } 2 \, K_{ \alpha, \textnormal{reg}}'(x - y) \, \varphi(y) \dee y \\[1ex]
&= \hphantom{x} h \int_{ \T } \upDelta_{x} K_{ \alpha, \textnormal{reg}}' (y) \, \varphi(y) \dee y \qquad \text{\footnotesize(note the \({ x }\) in \({ \upDelta_x }\))} \\[1ex]
&= xh \int_{ \T } \int_{ -1 }^{ 1 } K_{ \alpha, \textnormal{reg}}''(y + tx) \dee t \, \varphi(y) \dee y
\end{align*}
by the mean value theorem and repeated use of parity and periodicity of \({ K_{ \alpha, \textnormal{reg}} }\) and~\({ \varphi }\). Consequently,
\begin{equation*}
\abs[\big]{\upDelta_{h}\bigl[K_{ \alpha, \textnormal{reg}} \ast \varphi \bigr](x)} \lesssim \norm{ \varphi }_{\mathring{ \textnormal{C}}^\beta(\T) }^{ \theta } \abs{ x } h < \norm{ \varphi }_{\mathring{ \textnormal{C}}^\beta(\T) }^{ \theta } \abs{ x }^{ \alpha } h^\beta,
\end{equation*}
for any \({ \theta \in (0, 1) }\) because \({ K_{ \alpha, \textnormal{reg}}'' }\) is bounded and \({ \norm{ \varphi }_{ \infty } \lesssim \norm{ \varphi }_{\mathring{ \textnormal{C}}^\beta(\T) }^{ \theta } \norm{ \varphi }_{ \infty }^{ 1 - \theta } \lesssim \norm{ \varphi }_{\mathring{ \textnormal{C}}^\beta(\T) }^{ \theta } }\). Hence,
\begin{equation} \label{eq:firstorder-difference-regular-final}
\abs{ x }^{ - \alpha } \smashoperator{\sup_{ h \in ( 0, \abs{ x }] }} \frac{ \upDelta_{h}\bigl[K_{ \alpha, \textnormal{reg}} \ast \varphi \bigr](x) }{ h^\beta } \lesssim \norm{ \varphi }_{\mathring{ \textnormal{C}}^\beta(\T) }^{ \theta }.
\end{equation}
Switching to the singular part, one finds by parity and periodicity that
\begin{align} \label{eq:firstorder-difference-singular}
\begin{aligned}
\upDelta_{h}\bigl[ \abs{  }^{ \alpha - 1 } \ast \varphi \bigr](x) &= \hphantom{h^\alpha} \smashoperator{\int\limits_{ -\uppi }^{ 0 }} \upDelta_{h}\abs{  }^{ \alpha - 1 }(y) \, \upDelta_{\abs{ x }} \varphi(y) \dee y \\[1ex]
&= \smash{h^\alpha \smashoperator{\int\limits_{ -\uppi / h }^{ 0 }} \upDelta_{1}\abs{  }^{ \alpha - 1 }(s) \, \upDelta_{\abs{ x }}\varphi(hs) \dee s } \vphantom{\int\limits^0} \qquad \text{\footnotesize(note the subscripts).} 
\end{aligned}
\end{align}
Since \({ \varphi \in \mathring{ \textnormal{C}}^\beta(\T) }\), we have
\begin{align}
&\abs{  \upDelta_{\abs{ x }}\varphi(y) } \lesssim \norm{ \varphi}_{\mathring{ \textnormal{C}}^\beta(\T)} \min \Set{ \abs{ x }^\beta, \abs{ y }^\beta } \quad \textnormal{for } \beta < \alpha, \label{eq:delta-estimate-beta}
\shortintertext{and furthermore,}
&\abs{ \upDelta_{\abs{ x }}\varphi(y) } \lesssim \max \Set{ \abs{ x }^\alpha, \abs{ y }^\alpha } \label{eq:delta-estimate-alpha}
\end{align}
by the already established estimate \({ \mu - \varphi( \xi) \lesssim \abs{ \xi }^{ \alpha } }\) for \({ \abs{ \xi } \ll 1 }\). Interpolating between~\eqref{eq:delta-estimate-beta} and~\eqref{eq:delta-estimate-alpha} with parameter
\begin{equation*}
\theta \coloneqq \frac{ \alpha }{ \alpha + \beta } \in \left( \tfrac{ 1 }{ 2 }, 1 \right), \quad \textnormal{so that} \quad \theta \beta = (1 - \theta) \alpha,
\end{equation*}
then yields
\begin{align} \SwapAboveDisplaySkip \label{eq:delta-estimate-holder-small}
\begin{aligned}
\abs{ \upDelta_{\abs{ x }}\varphi(y) } &\lesssim \norm{ \varphi}_{\mathring{ \textnormal{C}}^\beta(\T)}^{ \theta} \min \Set*{ \abs{ x }^{ \theta \beta}, \abs{ y }^{ \theta\beta} } \max \Set*{ \abs{ x }^{ (1 - \theta) \alpha}, \abs{ y }^{ (1 - \theta) \alpha} } \\[1ex]
&=  \norm{ \varphi}_{\mathring{ \textnormal{C}}^\beta(\T)}^{ \theta} \abs{ x y }^{ \theta \beta }.
\end{aligned}
\end{align}
This estimate, with \({ y = hs }\), is appropriate for small~\({ s }\) in~\eqref{eq:firstorder-difference-singular}, but becomes problematic for large~\({ s }\) when \({ \alpha > 2 / 3 }\) since
\begin{equation*}
\upDelta_{1}\abs{  }^{ \alpha - 1 }(s) \, \abs{ s }^{ \theta \beta } \eqsim \abs{ s }^{ \alpha - 2 + \theta \beta }
\end{equation*}
for \({ \abs{ s } \gg 1 }\) (at scale \({ s \sim h^{-1} }\)), thus failing to be integrable in~\eqref{eq:firstorder-difference-singular} as~\({ h \searrow 0 }\). As a remedy, we use the estimate
\begin{align} \SwapAboveDisplaySkip \label{eq:delta-estimate-holder-large}
\begin{aligned}
\abs*{ \upDelta_{\abs{ x }}\varphi(hs) } & \leq \norm{ \varphi}_{\mathring{ \textnormal{C}}^\beta(\T)}^{ \theta} \abs{ x }^{ \theta \beta } \abs*{ \upDelta_{\abs{ x }}\varphi(hs) }^{1 - \theta} \\[1ex]
& \lesssim \norm{ \varphi}_{\mathring{ \textnormal{C}}^\beta(\T)}^{ \theta} \abs{ x }^{ \theta \beta + 1 - \theta } \smashoperator[l]{\max_{ \abs{ t - hs } \leq \abs{ x } }} \abs{ \varphi'(t) }^{1 - \theta}
\end{aligned}
\end{align}
when~\({ s \sim h^{-1} }\), where one observes that the given maximum of \({ \varphi'(t) }\) is uniformly bounded over \({ h \in (0, \abs{ x }] }\) and~\({ x \in [- r, 0) }\) since \({ t }\) stays away from the singularity at~\({ 0 }\). We then note that
\begin{equation*}
\min \Set[\Big]{ \abs{ h s }^{ \theta \beta}, \abs{ x }^{ 1 - \theta } \smashoperator[l]{\max_{ \abs{ t - hs } \leq \abs{ x } }} \abs{ \varphi'(t) }^{1 - \theta} } \lesssim \max \Set*{ h^{ \theta\beta}, \abs{ x }^{ 1 - \theta }  } \leq \abs{ x }^{ \min \Set{ \theta \beta, 1- \theta } }
\end{equation*}
uniformly over \({ s \in (-\uppi/h, 0) }\), where we have utilised that \({ h \leq \abs{ x } }\). In particular, combining~\eqref{eq:delta-estimate-holder-small} and~\eqref{eq:delta-estimate-holder-large} implies that
\begin{equation*}
\abs{\upDelta_{\abs{ x }}\varphi(hs) } \lesssim \norm{ \varphi}_{\mathring{ \textnormal{C}}^\beta(\T)}^{ \theta} \abs{ x }^{ \theta \beta } \abs{ x }^{ \min \Set{ \theta \beta, 1- \theta } }
\end{equation*}
uniformly over \({ s \in (-\uppi/h, 0) }\). Now~\eqref{eq:firstorder-difference-singular} may be estimated as\pagebreak 
\begin{align*}
\abs[\big]{\upDelta_{h}\bigl[ \abs{  }^{ \alpha - 1 } \ast \varphi \bigr](x)} &\lesssim \norm{ \varphi}_{\mathring{ \textnormal{C}}^\beta(\T)}^{ \theta} h^\alpha \abs{ x }^{ \theta \beta + \min \Set{ \theta \beta, 1- \theta } } \smashoperator{\int\limits_{ - \infty }^{ 0 }} \abs[\big]{\upDelta_{1}\abs{  }^{ \alpha - 1 }(s)} \dee s \\[1ex]
& \lesssim \norm{ \varphi}_{\mathring{ \textnormal{C}}^\beta(\T)}^{ \theta} h^\alpha \abs{ x }^{ \theta \beta + \min \Set{ \theta \beta, 1- \theta } },
\end{align*}
where the integral converges because \({ \upDelta_{1}\abs{  }^{ \alpha - 1 }(s) \lesssim  \abs{ s }^{ \alpha - 2 } }\) for \({ \abs{ s } \gg 1 }\) with \({ \alpha - 2 < -1 }\). Therefore, as~\({ h \leq \abs{ x } }\),
\begin{align} \SwapAboveDisplaySkip \label{eq:firstorder-difference-singular-final}
\begin{aligned}
\abs{ x }^{ - \alpha } \smashoperator{\sup_{ h \in ( 0, \abs{ x }] }} \frac{ \upDelta_{h}\bigl[ \abs{  }^{ \alpha - 1 } \ast \varphi \bigr](x) }{ h^\beta } &\lesssim \norm{ \varphi }_{\mathring{ \textnormal{C}}^\beta(\T) }^{ \theta } \abs{ x }^{ \min \Set*{ (2 \theta - 1) \beta, (1 - \theta)(1 - \beta) \vphantom{\sum} } } \\[1ex]
& \lesssim  \norm{ \varphi }_{\mathring{ \textnormal{C}}^\beta(\T) }^{ \theta }
\end{aligned}
\end{align}
uniformly over \({ x \in [- r, 0) }\) and all \({ \beta < \alpha }\) sufficiently close to~\({ \alpha }\), since in that case
\begin{equation*}
\min \Set[\big]{ (2 \theta - 1) \beta, (1 - \theta)(1 - \beta) } > 0 \quad \textnormal{(uniformly).}
\end{equation*}
We then put \eqref{eq:steady-fristorder-difference}, \eqref{eq:firstorder-difference-phi-and-N}, \eqref{eq:firstorder-difference-regular-final}, and~\eqref{eq:firstorder-difference-singular-final} together and find that
\begin{equation} \label{eq:holder-norm-around-zero}
\sup_{ \substack{x \in [- r, 0); \\ h \in ( 0, \abs{ x }]} } \!\!\! \frac{ \upDelta_{h}\varphi(x) }{ h^\beta } \lesssim \norm{ \varphi }_{\mathring{ \textnormal{C}}^\beta(\T) }^{ \theta }
\end{equation}
uniformly over all \({ \beta }\) sufficiently close to~\({ \alpha }\). By smoothness away from~\({ 0 }\), one has
\begin{equation*}
\smash{\norm{ \varphi }_{\mathring{ \textnormal{C}}^\beta(\T) } \lesssim \max \Set[\Bigg]{ 1,\hspace{.5em} \smashoperator{\sup_{ \substack{x \in [- r, 0); \\ h \in ( 0, \abs{ x }]} }} \frac{ \upDelta_{h}\varphi(x) }{ h^\beta } } } \vphantom{\Bigg\{}
\end{equation*}
for all~\({ \beta \leq \alpha }\), and so~\eqref{eq:holder-norm-around-zero} implies that
\begin{equation*}
\Biggl( \smashoperator[r]{\sup_{ \substack{x \in [- r, 0); \\ h \in ( 0, \abs{ x }]} }} \frac{ \upDelta_{h}\varphi(x) }{ h^\beta } \Biggr)^{1 - \theta} \lesssim 1
\end{equation*}
uniformly over \({ \beta }\) sufficiently close to~\({ \alpha }\). In particular, letting \({ \beta \nearrow \alpha }\) (for which \({ \theta \searrow \tfrac{ 1 }{ 2 } }\) stays away from~\({ 1 }\)), it follows that \({ \varphi }\) is indeed \({ \textnormal{C}^\alpha }\)~continuous around~\({ 0 }\).
\end{proof}
\vspace*{-.8em} 

In case~\eqref{eq:n-sgn} we could have assumed that \({ \varphi(- \uppi) = - \mu }\) instead of \({ \varphi(0) = \mu }\) in \cref{thm:regularity} and then proved exact \({ \alpha }\)-Hölder continuity at~\({ - \uppi }\). We conjecture that both assumptions imply the other and more generally imply antisymmetry of waves about~\({ - \frac{ \uppi }{ 2 } }\) whenever one deals with antisymmetric nonlinearities. This is also the reason why we assumed that \({ \varphi \bigl( - \tfrac{ \uppi }{ 2 } \bigr) = 0 }\) in \cref{thm:lower-bound-speed}. As a remedy to the lack of proof of the general property, we shall in \cref{sec:bifurcation} instead \emph{construct} solutions which are antisymmetric about~\({ - \tfrac{ \uppi }{ 2 } }\).

\enlargethispage{1.1\baselineskip} 
\vspace*{-.8em} 

\section{Global bifurcation analysis} \label{sec:bifurcation}

We first establish nontrivial small-amplitude travelling waves around the line \({ c \mapsto (0, c) }\) of trivial solutions by means of local bifurcation theory and then extend the bifurcation curve globally using the analytic theory of Buffoni and Toland~\autocite{BufTol2003a}. By carefully examining the structure of the global curve in connection with the a~priori nodal properties in \cref{sec:regularity}, we finally deduce the existence of a limiting sequence along the curve which converges to a highest wave satisfying \cref{thm:regularity}. This establishes \cref{thm:existence} when the nonlinearities~\eqref{eq:n} are smooth, that is, when they equal \({ n(x) = x^p }\) for~\({ 2 \leq p \in \N }\), and in \cref{fig:bifurcation} we provide a sketch of the analysis.

\begin{figure}[h!]
\floatbox[{\capbeside\thisfloatsetup{capbesideposition={right,center},capbesidesep=quad}}]{figure}[\FBwidth]
{\caption{Illustrating the global bifurcation diagram in the smooth case \({ n(x) = x^p }\) for \({ 2 \leq p \in \N }\) of \({ 2 \uppi/ k }\)-periodic even solutions obtained in \cref{thm:global-bifurcation} bifurcating from the trivial solution~\({ (0, k^{- \alpha}) }\) and reaching a limiting extreme wave. The dashed vertical lines mark the bounds for the wave speed in \cref{thm:bound-wavespeed,thm:lower-bound-speed-bifurcation}, whereas the solid curve displays the possible maximal height from~\eqref{eq:highest-value} for these waves (plotted for~\({ p = 3 }\)). Along the dotted bifurcation curve, one may extract a sequence for which possibilities~\ref{thm:global-bifurcation-blowup} and~\ref{thm:global-bifurcation-boundary} in \cref{thm:global-bifurcation} occur simultaneously, converging to a solution of~\eqref{eq:steady} with the \({ \textnormal{C}^\alpha }\)~properties of~\cref{thm:regularity}.}\label{fig:bifurcation}}
{%
\begin{tikzpicture}[scale=1.2]
\fill[artcolor-boxbg, domain=.4:4, variable=\x] (.4, 0) --  plot (\x, {3*sqrt(\x/3)^(2/3)}) -- (4, 0) -- cycle;
\draw[thick,->] (-.3,0) -- (4.8,0) node[anchor=west] {\({ c }\)};
\draw[thick,->] (0,-.3) -- (0,4) node[anchor=south] {\({ \max \varphi }\)};
\draw[thick,dashed,color=artcolor] (.4,3.8) -- (.4,0);
\draw[thick,dashed,color=artcolor] (4,3.8) -- (4,0) node[anchor=north,color=artcolortext] { \({ \frac{ p }{ p - 1 }  \norm{K_\alpha}_{ \textnormal{L}^1(\T)} }\)};
\draw[thick] (4,.08) -- (4,-.08);
\draw[domain=0:4.4,smooth,samples=200,thick,color=artcolor,variable=\x] plot (\x, {3*sqrt(\x/3)^(2/3)});
\coordinate (trivial) at (1.7,0);
\coordinate (max) at (3,3);
\draw[thick,dotted] (trivial) node[anchor=north] {\({ k^{- \alpha } }\)} to [in=280,out=90] (max);
\draw[color=artcolortext,fill=artcolortext] (trivial) circle (.05);
\draw[color=artcolortext,fill=artcolortext] (max) circle (.05) node[anchor=south,rotate=20,xshift=-20] {\({ \max \varphi = (c / p)^{1/(p - 1)} }\)};;
\end{tikzpicture}%
}
\end{figure}

In the general nonsmooth situation, however, one cannot use the analytic bifurcation theory directly. We resolve this issue by regularising \({ n }\) analytically around~\({ 0 }\) (where its regularity is only of order~\({ p }\) in the Hölder scale) and instead study global bifurcation for the regularised equation
\begin{equation} \label{eq:steady-reg}
0 = F^\epsilon(\varphi, c) \coloneqq \abs{ \textnormal{D} }^{- \alpha} \varphi - N^\epsilon(\varphi; c) - \textstyle\fint_\T n^\epsilon(\varphi)
\end{equation}
of~\eqref{eq:steady} for every~\({ 0 < \epsilon \ll 1 }\). This leads to solutions~\({ (\varphi^\epsilon, c^\epsilon) }\) at the end of the bifurcation curves, with the optimal \({ \alpha }\)-Hölder continuity of \cref{sec:regularity}, that will be shown to converge (up to a subsequence) to a solution of~\eqref{eq:steady} with the same Hölder properties as~\({ \epsilon \searrow 0 }\). Here \({ N^\epsilon(\varphi; c) \coloneqq c \varphi - n^\epsilon(\varphi) }\) and
\begin{equation} \label{eq:n-reg}
n^{ \epsilon }(x) \coloneqq
\begin{cases}
\bigl( x^2 + \epsilon^2 \bigr)^{p / 2} - \epsilon^p & \text{in case~\eqref{eq:n-abs};} \\[1ex]
x \left( \bigl( x^2 + \epsilon^2 \bigr)^{ (p - 1) / 2} - \epsilon^{p - 1} \right) & \text{in case~\eqref{eq:n-sgn}}
\end{cases}
\end{equation}
is a natural analytic regularisation with the same monotonicity properties as~\({ n }\) and that converges uniformly to~\({ n }\) on compact sets as~\({ \epsilon \searrow 0 }\). In particular, the regularity theory in \cref{sec:regularity} carries over to the new setting by replacing~\({ n }\) and~\({ N }\) with~\({ n^\epsilon }\) and~\({ N^\epsilon }\), noting that the extreme value corresponding to the first positive critical point for~\({ N^\epsilon }\) is a continuous function
\begin{equation} \label{eq:highest-value-reg}
\mu^\epsilon \coloneqq \mu(p, c, \epsilon)
\end{equation}
that converges to~\({ \mu }\) in~\eqref{eq:highest-value} as~\({ \epsilon \searrow 0 }\) by the implicit function theorem.

In the remainder, we focus on the analysis of the nonsmooth situation, leaving the appropriate modifications (\enquote{\({ \epsilon =  0}\)}) when \({ n(x) = x^p }\) for \({ 2 \leq p \in \N }\) to the reader, but shall nevertheless provide details for the bifurcation formulas in the smooth case as they may be of independent interest.

According to the above, we study~\({ F^\epsilon }\) from~\eqref{eq:steady-reg} as an operator \({ \mathcal{X}^{ \beta } \times  \R_+ \to \mathcal{X}^{ \beta } }\), where \({ \R_+ \coloneqq [0, \infty) }\) and \({ \mathcal{X}_{\vphantom{even}}^{\beta} \coloneqq  \mathring{ \textnormal{C}}_{ \textnormal{even}}^\beta(\T) }\), noting that \({ N^\epsilon(\cdot, c) }\) acts on~\({ \mathcal{X}^{ \beta} }\) in light of~\autocite[Theorem~2.1]{GoeSac1999a}. We also let \({ \beta \in \bigl(\max \Set*{ \alpha, \tfrac{ 1 }{ 2 } }, 1 \bigr) }\); the choice \({ \beta > \frac{ 1 }{ 2 } }\) guarantees that the Fourier series of \({ \varphi \in \mathcal{X}^\beta }\) converges uniformly to~\({ \varphi }\), whereas the requirement \({ \beta \in (\alpha, 1) }\) avoids the technicalities of the Hölder--Zygmund space of order~\({ 1 }\) and assures that \({ \mathcal{X}^\beta }\) contains the sought-after extreme wave in \cref{thm:regularity}.

Observe that \({ F^\epsilon }\) is analytic due to the regularisation and that its linearisation around the line of trivial solutions equals
\begin{equation*}
\partial_{ \varphi} F^\epsilon(0, c) =  \abs{ \textnormal{D} }^{- \alpha} - c \identityoperator.
\end{equation*}
Hence, for \({ c > 0 }\) the kernel of \({ \partial_{ \varphi} F^\epsilon(0, c) }\) is trivial unless \({ c = k^{- \alpha} }\) for some integer~\({ k \geq 1}\), being a simple eigenvalue of~\({ \abs{ \textnormal{D} }^{- \alpha} }\), in which case
\begin{equation*} 
\ker \partial_{ \varphi} F^\epsilon(0, k^{ - \alpha } ) = \vectorspan_{k \geq 1} \Set{ \cos (k \cdot) }
\end{equation*}
is one-dimensional. Furthermore, \({ \abs{ \textnormal{D} }^{- \alpha} }\) is a compact operator \({ \mathcal{X}^\beta \to \mathcal{X}^\beta }\) since it is \({ \alpha }\)-smoothing and \({ \mathcal{X}^{\beta'} }\) is compactly embedded in~\({ \mathcal{X}^\beta }\) for \({ \beta' > \beta }\). Thus \({ \partial_{ \varphi} F^\epsilon(0, c) }\) is a compact perturbation of the identity and therefore constitutes a Fredholm operator of index zero. We may therefore apply the (analytic) Crandall--Rabinowitz theorem~\autocite[Theorems~8.3.1 and~8.4.1]{BufTol2003a} and obtain the following local bifurcation result.

\begin{theorem}[Local bifurcation] \label{thm:local-bifurcation}
For all \({ \epsilon > 0 }\) and \({ k \geq 1 }\) there exists a local, analytic curve
\begin{equation*}
\mathfrak{C}_{ \textnormal{loc}, k}^\epsilon \colon s \mapsto (\varphi_{ k }^{ \epsilon }(s), c_{ k }^{ \epsilon }(s)) \in \mathcal{X}^{ \beta } \times \R_+,
\end{equation*}
defined around \({ s = 0 }\), of nontrivial \({ 2 \uppi / k }\)-periodic solutions of~\eqref{eq:steady-reg} that bifurcates from the line of trivial solutions \({ c \mapsto (0, c) }\) at \({ \mathfrak{C}_{ \textnormal{loc}, k}^\epsilon(0) = (0, k^{- \alpha}) }\). In a neighbourhood of~\({ (0, k^{- \alpha}) }\) these are all the nontrivial solutions of \({ F^\epsilon(\varphi, c) = 0 }\) in \({ \mathcal{X}^{ \beta } \times \R_+ }\).
\end{theorem}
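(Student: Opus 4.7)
The plan is to verify the hypotheses of the analytic Crandall--Rabinowitz theorem~\autocite[Theorems~8.3.1 and~8.4.1]{BufTol2003a} applied to the map $F^\epsilon \colon \mathcal{X}^\beta \times \R_+ \to \mathcal{X}^\beta$ at the point $(0, k^{-\alpha})$. Most of the work has already been done in the preceding discussion: $F^\epsilon$ is analytic (the regularised nonlinearity $n^\epsilon$ in~\eqref{eq:n-reg} is analytic on $\R$, and composition with an analytic function defines an analytic superposition operator on the Banach algebra $\mathcal{X}^\beta$ with $\beta > 1/2$); $F^\epsilon(0,c) = 0$ identically in $c$; and the linearisation $\partial_\varphi F^\epsilon(0,c) = \abs{\textnormal{D}}^{-\alpha} - c\,\identityoperator$ is a compact perturbation of the identity, hence Fredholm of index zero. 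At $c = k^{-\alpha}$ the kernel is the one-dimensional subspace $\vectorspan\{\cos(k\,\cdot)\}$ of $\mathcal{X}^\beta$, and since the operator is Fredholm of index zero with a self-adjoint-like structure via Fourier series, its range in $\mathcal{X}^\beta$ is the closed codimension-one subspace $\{\psi \in \mathcal{X}^\beta \given \widehat{\psi}(k) = 0\}$.

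The only non-automatic hypothesis is the transversality (Crandall--Rabinowitz) condition
\begin{equation*}
\partial_c \partial_\varphi F^\epsilon(0, k^{-\alpha})\bigl[\cos(k\,\cdot)\bigr] \notin \range \partial_\varphi F^\epsilon(0, k^{-\alpha}).
\end{equation*}
This is immediate: differentiating in $c$ gives $\partial_c \partial_\varphi F^\epsilon(0, c) = -\identityoperator$, so the element in question is $-\cos(k\,\cdot)$, whose $k$-th cosine Fourier coefficient equals $-1 \neq 0$, so it does not lie in the range identified above.

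With all hypotheses in place, the analytic Crandall--Rabinowitz theorem produces a locally unique, analytic curve $s \mapsto (\varphi_k^\epsilon(s), c_k^\epsilon(s))$ in $\mathcal{X}^\beta \times \R_+$ defined in a neighbourhood of $s = 0$ with $(\varphi_k^\epsilon(0), c_k^\epsilon(0)) = (0, k^{-\alpha})$ and $\frac{\dee}{\dee s}\varphi_k^\epsilon(0) = \cos(k\,\cdot)$, such that every nontrivial zero of $F^\epsilon$ in a neighbourhood of $(0, k^{-\alpha})$ lies on this curve. Finally, the $2\uppi/k$-periodicity of the bifurcating solutions follows because $F^\epsilon$ commutes with the translation $\varphi \mapsto \varphi(\cdot + 2\uppi/k)$: by uniqueness in the Crandall--Rabinowitz theorem applied inside the closed subspace of even $2\uppi/k$-periodic functions (where the same verification of hypotheses carries through verbatim, the eigenfunction $\cos(k\,\cdot)$ being $2\uppi/k$-periodic), the curve obtained in this subspace coincides with the one in $\mathcal{X}^\beta$. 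No step is genuinely obstructive here; the only mild subtlety is checking analyticity of the superposition operator induced by $n^\epsilon$ on $\mathcal{X}^\beta$, which is a standard consequence of analyticity of $n^\epsilon$ on $\R$ together with $\mathcal{X}^\beta$ being a Banach algebra for $\beta > 1/2$.
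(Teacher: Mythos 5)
Your proposal is correct and follows essentially the same route as the paper: the paper's "proof" of this theorem is in fact the discussion immediately preceding the statement (analyticity of \(F^\epsilon\) via regularisation, compact perturbation of the identity hence Fredholm of index zero, one-dimensional kernel spanned by \(\cos(k\,\cdot)\) at \(c = k^{-\alpha}\)), followed by a direct invocation of Crandall--Rabinowitz. You helpfully make explicit the transversality check \(\partial_c\partial_\varphi F^\epsilon(0,k^{-\alpha})[\cos(k\,\cdot)] = -\cos(k\,\cdot) \notin \range\partial_\varphi F^\epsilon(0,k^{-\alpha})\), together with the Fourier characterisation of the range, which the paper leaves implicit. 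The only minor divergence is in handling \(2\uppi/k\)-periodicity: the paper reduces to \(k = 1\) via the scaling invariance noted in \cref{rem:scaling}, whereas you argue by invariance of \(F^\epsilon\) under the translation subgroup plus uniqueness of the Crandall--Rabinowitz curve in the (even, zero-mean) \(2\uppi/k\)-periodic subspace; both are standard and equally valid.
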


Since we have an analytic curve in \({ \mathcal{X}^\beta \times \R_+ }\), we may compute the associated asymptotic formulas for~\({ \mathfrak{C}_{ \textnormal{loc}, k}^\epsilon(s)  }\) as \({ s \to 0 }\) by means of direct expansions in the regularised steady equation~\eqref{eq:steady-reg}. Alternatively, one could use the more general theory in~\autocite[Section~I.6]{Kie2012e}.

\begin{theorem}[Bifurcation formulas] \label{thm:bifurcation-formulas}
\({ \mathfrak{C}_{ \textnormal{loc}, k}^{ \epsilon} }\) can be parametrised in such a way that \({ s \mapsto c_{ k }^{ \epsilon }(s) }\) is even, and with this choice the bifurcation formulas are as follows as~\({ s \to 0 }\):
\begin{alignat*}{2}
\text{In case~\eqref{eq:n-abs}:} \qquad \left\{\hspace{-1em}
\begin{aligned}
&&\varphi_k^\epsilon(s)(x) &= s \cos kx + s^2 \, C_{k, \textnormal{abs}}^\epsilon \cos 2kx + \mathcal{O}(s^3); \\[1ex]
&& c_k^\epsilon(s) &= k^{- \alpha} + s^2 \, 2 C_{k, \textnormal{abs}}^\epsilon + \mathcal{O}(s^4);
\end{aligned}\right. \\[2ex]
\text{in case~\eqref{eq:n-sgn}:} \qquad \left\{\hspace{-1em}
\begin{aligned}
&&\varphi_k^\epsilon(s)(x) &= s \cos kx + s^3 \, C_{k, \textnormal{sgn}}^\epsilon \cos 3kx + \mathcal{O}(s^5); \\[1ex]
&& c_k^\epsilon(s) &= k^{- \alpha} + s^2 \, \tfrac{ 3 }{ 4 }(p - 1) \epsilon^{p - 3} + \mathcal{O}(s^4),
\end{aligned}\right.
\end{alignat*}
with \({ C_{k, \textnormal{abs}}^\epsilon \coloneqq \displaystyle \frac{ \tfrac{ 1 }{ 4 } p \epsilon^{p - 2}  }{ k^{- \alpha} - (2k)^{- \alpha} } }\) and \({ C_{k, \textnormal{sgn}}^\epsilon \coloneqq \displaystyle \frac{ \tfrac{ 1 }{ 8 } (p - 1) \epsilon^{p - 3}  }{ k^{- \alpha} - (3k)^{- \alpha} } }\).
\end{theorem}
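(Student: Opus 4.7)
The plan is to perform a direct Lyapunov--Schmidt expansion of the local branch in powers of $s$. Substituting the Ansatz
\begin{equation*}
\varphi_k^\epsilon(s) = \sum_{j \geq 1} s^j \phi_j, \qquad c_k^\epsilon(s) = k^{-\alpha} + \sum_{j \geq 1} s^j c_j,
\end{equation*}
into~\eqref{eq:steady-reg} and matching coefficients yields a recursive family
\begin{equation*}
\bigl(L - k^{-\alpha}\identityoperator\bigr)\phi_j = c_{j - 1}\phi_1 + R_j,
\end{equation*}
where $L \coloneqq \abs{\textnormal{D}}^{-\alpha}$ and $R_j$ is the explicit mean-free polynomial combination of the previously determined~$\phi_i$, $c_i$ furnished by the Taylor expansion of~$n^\epsilon$ at zero. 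The kernel of $L - k^{-\alpha}\identityoperator$ on $\mathcal{X}^\beta$ is spanned by $\phi_1 \coloneqq \cos(k \cdot)$, so the Fredholm alternative demands $R_j + c_{j-1}\phi_1 \perp \phi_1$, which uniquely determines $c_{j-1}$; the normalisation $\phi_j \perp \phi_1$ then fixes $\phi_j$ by inverting $L - k^{-\alpha}\identityoperator$ on the orthogonal complement, noting that each mode $\cos(mk\cdot)$ with $m \geq 2$ lies in an eigenspace with eigenvalue $(mk)^{-\alpha} \neq k^{-\alpha}$.

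A direct binomial expansion of~\eqref{eq:n-reg} supplies the inputs
\begin{equation*}
n^\epsilon(x) = \tfrac{p}{2}\epsilon^{p - 2}x^2 + \tfrac{p(p - 2)}{8}\epsilon^{p - 4}x^4 + \dotsb \quad \textnormal{in case~\eqref{eq:n-abs}}
\end{equation*}
and
\begin{equation*}
n^\epsilon(x) = \tfrac{p - 1}{2}\epsilon^{p - 3}x^3 + \tfrac{(p - 1)(p - 3)}{8}\epsilon^{p - 5}x^5 + \dotsb \quad \textnormal{in case~\eqref{eq:n-sgn}.}
\end{equation*}
In the latter, the $\Z_2$-symmetry $(\varphi, c) \mapsto (-\varphi, c)$ of~\eqref{eq:steady-reg} transfers along the branch and automatically forces $c_k^\epsilon$ to be even and $\varphi_k^\epsilon$ to contain only odd powers of~$s$. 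The first nontrivial interaction occurs at order~$s^3$, where $\phi_1^3 = \tfrac{3}{4}\cos(kx) + \tfrac{1}{4}\cos(3kx)$: the $\cos(kx)$-projection pins down $c_2$, while inversion on the $\cos(3k\cdot)$-subspace delivers $\phi_3 = C_{k,\textnormal{sgn}}^\epsilon \cos(3k \cdot)$.

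In case~\eqref{eq:n-abs} the lowest interaction is already at order~$s^2$, driven by $\phi_1^2 = \tfrac{1}{2}(1 + \cos(2kx))$; the constant is absorbed by $\fint_\T n^\epsilon(\varphi)$, the $\cos(kx)$-projection yields $c_1 = 0$, and inversion on the $\cos(2k\cdot)$-subspace gives $\phi_2 = C_{k,\textnormal{abs}}^\epsilon \cos(2k \cdot)$. The coefficient $c_2$ is then read off at order~$s^3$ from the $\cos(kx)$-projection of $p\epsilon^{p-2}\phi_1\phi_2$, using the product identity $\cos(kx)\cos(2kx) = \tfrac{1}{2}\bigl(\cos(kx) + \cos(3kx)\bigr)$; the same order simultaneously produces the $\cos(3k\cdot)$-contribution to $\phi_3$.

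The main obstacle, beyond routine trigonometric bookkeeping, is establishing that the natural Crandall--Rabinowitz parametrisation already renders $c_k^\epsilon$ even in case~\eqref{eq:n-abs}, where no obvious $\Z_2$-symmetry is available. This follows by induction on~$j$ showing that $\phi_j$ is a linear combination of modes $\cos(mk \cdot)$ with $m \equiv j \pmod{2}$: since $n^\epsilon$ involves only even powers of its argument in case~\eqref{eq:n-abs}, every product $\phi_{j_1} \cdots \phi_{j_{2l}}$ contributing to $R_j$ satisfies $\sum_i j_i = j$, and products of cosines generate only frequencies $\pm j_1 \pm \dotsb \pm j_{2l}$, whose parity equals that of~$j$. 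Thus the $\cos(kx)$-projection of $R_j$ vanishes whenever $j$ is even, giving $c_1 = c_3 = \dotsb = 0$ and completing the proof.
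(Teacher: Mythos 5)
Your proposal follows the same underlying route as the paper: expand $\varphi_k^\epsilon(s)$ and $c_k^\epsilon(s)$ into power series in~$s$, insert the Taylor development~\eqref{eq:n-reg-taylor} of $n^\epsilon$, match orders, and solve the resulting hierarchy by the Fredholm alternative for $\abs{\textnormal{D}}^{-\alpha} - k^{-\alpha}\identityoperator$, using the normalisation $\cosineexpansion{\phi_j}_1 = 0$ for $j \geq 2$. Your explicit computations of $\phi_2$, $\phi_3$ and $c_2$ agree with the paper's.

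Where you genuinely diverge is in establishing the evenness of $s \mapsto c_k^\epsilon(s)$. The paper has a single, case-independent argument: since \eqref{eq:steady-reg} is translation-invariant, $(\varphi^\epsilon(s)(\cdot + \uppi), c^\epsilon(s))$ is again a solution pair, and since $\cosineexpansion{\varphi^\epsilon(s)(\cdot + \uppi)}_1 = -s$, uniqueness of the local branch forces $\varphi^\epsilon(s)(\cdot + \uppi) = \varphi^\epsilon(-s)$ and $c^\epsilon(s) = c^\epsilon(-s)$. You instead use the $\Z_2$ sign-flip $(\varphi, c) \mapsto (-\varphi, c)$ for case~\eqref{eq:n-sgn} and a separate inductive frequency-parity argument for case~\eqref{eq:n-abs}. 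Both of your arguments are correct: in case~\eqref{eq:n-sgn} the sign flip is indeed a symmetry of~\eqref{eq:steady-reg} since $n^\epsilon$ is odd there, and in case~\eqref{eq:n-abs} your induction — that $\phi_j$ is supported on modes $\cos(mk\cdot)$ with $m \equiv j \pmod 2$, and hence that the $\cos(kx)$-projection of $R_j$ vanishes for even~$j$ — holds provided you also carry along the hypothesis $c_1 = c_3 = \dotsb = c_{j-2} = 0$ when checking the parity of the linear contribution $\sum_{i} c_i\phi_{j-i}$ to $R_j$, a step you elide but which closes cleanly. The paper's half-period shift is more economical because it handles both cases at once without touching the recursion, whereas your parity argument has the modest advantage of exposing the frequency structure of the higher-order corrections $\phi_j$ in case~\eqref{eq:n-abs} (playing the same role that the paper's \cref{thm:local-antisymmetry} plays in case~\eqref{eq:n-sgn}). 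One remark unrelated to the correctness of your argument: the $\cos(kx)$-projection at order $s^3$ produces $c_2 = \tfrac{1}{2}p\epsilon^{p-2}\,C_{k,\textnormal{abs}}^\epsilon$, which matches the paper's own computation $\varsigma_2 = \tfrac{1}{8}p^2\epsilon^{2(p-2)}/(k^{-\alpha} - (2k)^{-\alpha})$ but not the displayed coefficient $2C_{k,\textnormal{abs}}^\epsilon$ in the theorem statement; your derivation is the one consistent with the paper's proof.
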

\vspace*{-.8em} 
\begin{remark} \label{rem:scaling}
It suffices to study the case \({ k = 1 }\) of \({ 2 \uppi }\)-periodic solutions in the bifurcation analysis, since \({ F^{ \epsilon}( \varphi, c) = 0 }\) is invariant under the scaling
\begin{equation*}
\varphi \mapsto k^{ \alpha / (p - 1) } \varphi(k \cdot), \qquad c \mapsto k^\alpha c, \qquad \epsilon \mapsto k^{ \alpha / (p - 1)} \epsilon.
\end{equation*}
Thus we focus on \({ \mathfrak{C}_{ \textnormal{loc}}^\epsilon \coloneqq \mathfrak{C}_{ \textnormal{loc}, 1}^\epsilon }\), \({ \varphi^\epsilon \coloneqq \varphi_{ 1}^{ \epsilon} }\), and \({ c^\epsilon \coloneqq c_{1}^{ \epsilon} }\) from now on.
\end{remark}
\vspace*{-.8em} 
\begin{proof}
As in the proof of~\autocite[Theorem~6.1]{EhrWah2019a}, we parametrise \({ \mathfrak{C}_{ \textnormal{loc}}^\epsilon }\) with the requirement \({ \cosineexpansion{ \varphi^\epsilon(s)}_1 = s }\), where
\begin{equation*}
\cosineexpansion{ \varphi}_m \coloneqq \tfrac{ 1 }{ \uppi } \int_{ \T } \varphi(x) \cos (m x) \dee x, \qquad m = 1, 2, \dotsc
\end{equation*}
are the coefficients in the cosine expansion \({ \varphi = \sum_{ m = 1 }^{ \infty } \cosineexpansion{ \varphi}_m \cos (m \cdot) }\). Since \({ (\varphi^\epsilon(\cdot + \uppi), c^\epsilon) }\) also constitutes a solution pair whenever \({ (\varphi^\epsilon, c^\epsilon) }\) is, and
\begin{equation*}
\cosineexpansion{ \varphi^\epsilon(s)(\cdot + \uppi) }_1 = - \cosineexpansion{ \varphi^\epsilon(s) }_1 = - s = \cosineexpansion{ \varphi^\epsilon(-s) }_1,
\end{equation*}
it follows by uniqueness that \({ \varphi^\epsilon(s)(\cdot + \uppi) = \varphi^\epsilon(-s) }\) and \({ c^\epsilon(s) = c^\epsilon(-s) }\), proving the symmetry.

Switching to the bifurcation formulas, we analytically expand \({ \varphi^\epsilon(s) }\) and \({ c^\epsilon(s) }\) into
\begin{equation} \label{eq:analytic-expansion}
\varphi^\epsilon(s) = \sum\nolimits_{ \ell = 1 }^{ \infty } \varphi_\ell \, s^\ell \qquad \textnormal{and} \qquad c^\epsilon(s) = \sum\nolimits_{ \ell = 0 }^{ \infty } \varsigma_{2\ell} \, s^{2\ell}
\end{equation}
and observe that the coefficients may be found by plugging the expansions into~\eqref{eq:steady-reg} and identifying terms of equal order in~\({ s }\) by uniqueness. Note that the Taylor expansion
\begin{equation} \label{eq:n-reg-taylor}
n^\epsilon(x) =
\begin{cases}
\tfrac{ 1 }{ 2 } p \epsilon^{p - 2} x^2 + \mathcal{O}(x^4) & \text{in case~\eqref{eq:n-abs};} \\[1ex]
\tfrac{ 1 }{ 2 }( p - 1) \epsilon^{p - 3} x^3 + \mathcal{O}(x^5) & \text{in case~\eqref{eq:n-sgn}}
\end{cases}
\end{equation}
holds in an \({ \epsilon }\)-dependent interval around \({ x = 0 }\), which simplifies the analysis for all sufficiently small~\({ s }\). With \({ L \coloneqq \abs{ \textnormal{D} }^{- \alpha} - \varsigma_0 \identityoperator }\), this gives the following in case~\eqref{eq:n-abs}:
\begin{align*}
s &\colon L \varphi_1 = 0; \\[1ex]
s^2 &\colon L \varphi_2 = - \tfrac{ 1 }{ 2 } p \epsilon^{p - 2} \bigl( \varphi_1^2 - \textstyle\fint_\T \varphi_1^2 \bigr); \\[1ex]
s^3 &\colon L \varphi_3 = \varsigma_2 \varphi_1 - p \epsilon^{p - 2} \bigl( \varphi_1 \varphi_2 - \textstyle\fint_\T \varphi_1 \varphi_2 \bigr).
\end{align*}
The first-order case yields that \({ \varphi_1 = \cos }\) and \({ \varsigma_0 = 1 }\) (more generally,~\({ \varsigma_0 = k^{- \alpha} }\)), whence
\begin{equation*}
\varphi_2(x) =  \frac{ \tfrac{ 1 }{ 4 } p \epsilon^{p - 2}  }{ 1 - 2^{- \alpha} } \cos 2x.
\end{equation*}
Since \({ 2 \cos x \cos 2x = \cos x + \cos 3x }\), it follows that
\begin{equation*}
\varsigma_2 = \frac{ \tfrac{ 1 }{ 8 } p^2 \epsilon^{2(p - 2) } }{ 1 - 2^{- \alpha} } \qquad \text{and} \qquad \varphi_3(x) =  \frac{ \tfrac{ 1 }{ 8 } p^2 \epsilon^{2(p - 2) } }{ (1 - 2^{- \alpha}) ( 1 - 3^{- \alpha} ) } \cos 3x.
\end{equation*}

As for case~\eqref{eq:n-sgn}, we find that\vspace*{-.5em} 
\begin{align*}
s &\colon L \varphi_1 = 0, \\[1ex]
s^2 &\colon L \varphi_2 = 0, \\[1ex]
\text{and } s^3 &\colon L \varphi_3 = \varsigma_2 \varphi_1 - \tfrac{ 1 }{ 2 }(p - 1) \epsilon^{p - 3} \bigl( \varphi_1^3 - \textstyle\fint_\T \varphi_1^3 \bigr),
\end{align*}
leading to \({ \varphi_1 = \cos }\) and \({ \varsigma_0 = 1 }\). Moreover, \({ \varphi_2 = 0 }\) by choice of parametrisation (\({ \cosineexpansion{ \varphi_\ell }_1 = 0 }\) for~\({ \ell \geq 2 }\)). Finally,
\begin{equation*}
\varsigma_2 = \tfrac{ 3 }{ 4 } (p - 1) \epsilon^{p - 3} \qquad \text{and} \qquad \varphi_3(x) = \frac{ \tfrac{ 1 }{ 8 }(p - 1)\epsilon^{p - 3} }{ 1 - 3^{- \alpha} } \cos 3x
\end{equation*}
with help of the identity \({ 4 \cos^3 x = 3 \cos x + \cos 3x }\).
\end{proof}

\vspace*{-.5em} 
\enlargethispage{.3\baselineskip} 

We also include asymptotic formulas in the smooth case~\({ n(x) = x^p }\) for any \({ 2 \leq p \in \N }\) (with no regularisation). Formulas with arbitrary order in~\({ p }\) seem to be new, and the result adapts easily to other dispersive operators as well.

\begin{theorem}[Bifurcation formulas for smooth~\({ n }\)] \label{thm:bifurcation-formulas-smooth} Consider smooth~\({ n(x) = x^p }\) for \({ 2 \leq p \in \N }\). Then the bifurcation formulas for~\({ \mathfrak{C}_{ \textnormal{loc}, k}^{ \epsilon = 0} }\) with the parametrisation that \({ s \mapsto c_{ k }^{ \epsilon = 0 }(s) }\) is even are as follows as~\({ s \to 0 }\):
\begin{equation*}
\varphi_{ k }^{ \epsilon = 0 }(s)(x) = s \cos kx + s^{p} \Phi_k(x) + \mathcal{O}(s^{2p - 1}),
\end{equation*}
where \({ \Phi_k^{} \in \Set*{ \Phi_k^{ \textnormal{even} },  \Phi_k^{ \textnormal{odd} } } }\) depending on whether \({ p \geq 2 }\) is even or odd, with corresponding speed
\begin{alignat*}{5}
c_{ k }^{ \epsilon = 0 }(s) &= k^{- \alpha} + s^{2p - 2} && C_k^{ \textnormal{even}} && + \mathcal{O}(s^{2p}) \\[-.5em]
\shortintertext{\hspace{4cm}or} \\[-2.5em]
c_{ k }^{ \epsilon = 0 }(s) &= k^{- \alpha} + s^{p - 1} && C_k^{ \textnormal{odd}} && + \mathcal{O}(s^{2p - 2}).
\end{alignat*}
Here
\begin{alignat*}{7} \SwapAboveDisplaySkip
& \Phi_k^{ \textnormal{even} } &&\coloneqq \textstyle\sum_{ j = 0 }^{ \frac{ p }{ 2 } - 1 } & \Phi_{k, j}, & \quad \hphantom{\text{and}} \quad C_k^{ \textnormal{even}} &&\coloneqq \textstyle\frac{ p }{ 2^{p - 1} } \Bigl( C_{k, 0} + \sum_{ j = 1 }^{ \frac{ p }{ 2 } - 1 } \left( \binom{ p - 1}{ j } + \binom{p - 1}{j - 1} \right) C_{k, j} \Bigr), \\[1ex]
& \Phi_k^{ \textnormal{odd} } &&\coloneqq \textstyle\sum_{ j = 0 }^{ \frac{ p - 3 }{ 2 } } & \Phi_{k, j}, & \quad \text{and} \quad C_k^{ \textnormal{odd}} &&\coloneqq \textstyle\frac{ 1 }{ 2^{p - 1} } \binom{ p }{ \frac{ p - 1 }{ 2 } },
\end{alignat*}
with \({ \displaystyle C_{k, j} \coloneqq \frac{ \binom{ p }{ j } / 2^{p - 1} }{ k^{- \alpha } - (( p - 2j) k)^{ -\alpha } } }\) and \({ \Phi_{k, j}(x) \coloneqq C_{k, j} \cos \bigl( (p - 2j) kx \bigr) }\).
\end{theorem}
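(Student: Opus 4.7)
The plan is to mirror the strategy from the proof of \cref{thm:bifurcation-formulas}: by the scaling in Remark~\ref{rem:scaling} I first reduce to \({ k = 1 }\), then invoke the analyticity of \({ \mathfrak{C}_{ \textnormal{loc}}^{ \epsilon = 0} }\) to expand
\[
\varphi(s) = \sum_{ \ell \geq 1 } \varphi_\ell \, s^\ell, \qquad c(s) = \sum_{ \ell \geq 0 } \varsigma_{2 \ell} \, s^{2 \ell}
\]
with the normalisation \({ \cosineexpansion{ \varphi(s) }_1 = s }\), which renders \({ c }\) even in~\({ s }\) as before. Substituting into \({ \abs{ \textnormal{D} }^{- \alpha} \varphi - c \varphi + \varphi^p - \fint \varphi^p = 0 }\) and identifying powers of~\({ s }\), and writing \({ L \coloneqq \abs{ \textnormal{D} }^{- \alpha} - 1 }\) (self-adjoint on \({ \mathcal{X}^{\beta} }\) with \({ \ker L = \vectorspan \Set{ \cos } }\) and range \({ \Set{ \cos }^{\perp} }\)), the order-\({ m }\) equation reads
\[
L \varphi_m = \sum_{ \ell \geq 1 } \varsigma_{2 \ell} \, \varphi_{m - 2 \ell} - \bigl[ \varphi^p - \textstyle\fint \varphi^p \bigr]_m,
\]
solvable iff the right-hand side is \({ \textnormal{L}^2 }\)-orthogonal to~\({ \cos }\).

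A direct induction on \({ m = 1, \dotsc, p - 1 }\) gives \({ \varphi_1 = \cos }\), \({ \varsigma_0 = 1 }\), \({ \varphi_m = 0 }\), and \({ \varsigma_{2 \ell} = 0 }\) for all intermediate indices, since \({ [\varphi^p]_m = 0 }\) whenever \({ m < p }\). At order~\({ m = p }\) I invoke the classical Fourier identity
\[
\cos^p(x) - \textstyle\fint \cos^p = \frac{ 1 }{ 2^{p - 1} } \sum_{ j = 0 }^{ \lfloor (p - 1)/2 \rfloor } \binom{ p }{ j } \cos \bigl( (p - 2j) x \bigr).
\]
For even~\({ p }\) all active frequencies \({ p - 2j \geq 2 }\) lie in the support of \({ \widehat{ L }(m) = m^{ - \alpha } - 1 }\), and direct inversion produces \({ \varphi_p = \Phi_1^{ \textnormal{even} } }\). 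For odd~\({ p }\) the term \({ j = (p - 1)/2 }\) contributes a resonant \({ \cos }\)-component, and Fredholm orthogonality forces \({ \varsigma_{p - 1} = C_1^{ \textnormal{odd} } }\); inversion of the residual frequencies then gives \({ \varphi_p = \Phi_1^{ \textnormal{odd} } }\).

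The sharp error bounds rest on the combinatorial observation that a contribution to \({ [\varphi^p]_m }\) requires a composition \({ \ell_1 + \dotsb + \ell_p = m }\) with every \({ \varphi_{ \ell_i } \neq 0 }\); inductively, the active indices lie in \({ \Set{ 1, p } }\), giving \({ a + b p = m }\) and \({ a + b = p }\) for some integer \({ b \geq 0 }\), i.e., \({ b = (m - p)/(p - 1) }\). In the window \({ p < m < 2p - 1 }\) no \({ b \geq 1 }\) is admissible, so \({ [\varphi^p]_m = 0 }\) and the cascade collapses to \({ L \varphi_m = \sum \varsigma_{2 \ell} \, \varphi_{m - 2\ell} }\); a routine bookkeeping---Fredholm orthogonality at odd~\({ m }\) to kill the resonant \({ \varsigma \cos }\)-term and triviality of the residual sum at even~\({ m }\)---propagates \({ \varphi_m = 0 }\) and \({ \varsigma_{2 \ell} = 0 }\) throughout \({ p + 1 \leq m \leq 2p - 2 }\). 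For odd~\({ p }\) this is further streamlined by the symmetry \({ \varphi(-s) = - \varphi(s) }\), \({ c(-s) = c(s) }\), inherited by uniqueness from the antisymmetry \({ n(-x) = -n(x) }\), which automatically kills the even-indexed \({ \varphi_\ell }\)'s. At \({ m = 2p - 1 }\) the unique admissible composition \({ b = 1 }\) yields \({ [\varphi^p]_{2p - 1} = p \cos^{p - 1} \varphi_p }\), and in the even case Fredholm orthogonality extracts \({ \varsigma_{2p - 2} = p \cosineexpansion{ \cos^{p - 1} \Phi_1^{ \textnormal{even} } }_1 }\); a combination of the Chebyshev-type expansion of \({ \cos^{p - 1} }\), the product-to-sum formula, and Pascal's identity \({ \binom{ p - 1 }{ j } + \binom{ p - 1 }{ j - 1 } = \binom{ p }{ j } }\) reproduces exactly the stated \({ C_1^{ \textnormal{even} } }\).

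The main obstacle is this combinatorial bookkeeping---propagating the vanishing of intermediate coefficients sharply up to \({ m = 2p - 2 }\) and unpacking \({ \cosineexpansion{ \cos^{p - 1} \Phi_1^{ \textnormal{even} } }_1 }\) into closed form; the Fourier identities, \({ L }\)-inversion, and Fredholm arguments themselves are straightforward adaptations of those already carried out in \cref{thm:bifurcation-formulas}.
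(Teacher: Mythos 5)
Your proposal is correct and follows essentially the same approach as the paper: reduce to $k = 1$ by scaling, substitute the analytic expansion into~\eqref{eq:steady}, identify orders in~$s$, exploit the power-reduction and product-to-sum identities for $\cos$, and use solvability of $L\varphi_m = \text{RHS}$ (which requires the right-hand side to have no $\cos$-component) to extract the speed coefficients. Your bookkeeping via compositions $\ell_1 + \dotsb + \ell_p = m$ with entries in the active index set is a compact way to justify the vanishing of all $\varphi_m$ and $\varsigma_{2\ell}$ in the window $p < m < 2p - 1$, which the paper does by writing out the cascade order by order; both routes arrive at the same place. One small remark: Pascal's identity is not actually needed to arrive at the stated $C_k^{\textnormal{even}}$. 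Upon multiplying the odd power-reduction expansion of $\cos^{p-1}$ against $\Phi_1^{\textnormal{even}}$ and isolating the resonant $\cos$-frequencies $A - B = \pm 1$, the two index shifts $j = i$ and $j = i + 1$ already produce the unsimplified combination $\binom{p-1}{j} + \binom{p-1}{j-1}$ appearing in the statement, so citing Pascal is superfluous (applying it would in fact change the displayed form).
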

\vspace*{-1.3em} 
\enlargethispage{1.8\baselineskip} 
\begin{proof}
The cases~\({ p = 2, 3 }\) are similar to those in the proof of \cref{thm:bifurcation-formulas}, and we only examine \({ k = 1 }\) by \cref{rem:scaling}. Thus let \({ L \coloneqq \abs{ \textnormal{D} }^{- \alpha} - \varsigma_0 \identityoperator }\) and consider first even~\({ p \geq 4 }\):
\begin{alignat*}{8}
\begin{alignedat}{4}
s  & \colon \quad L \varphi_1 &&= 0; \\[1ex]
s^2  & \colon \quad L \varphi_2 &&= 0; \\[1ex]
s^3  & \colon \quad L \varphi_3 &&= \varsigma_2 \varphi_1;
\end{alignedat}\qquad \dotso \qquad 
\begin{alignedat}{4}
s^{p - 1} & \colon \quad L \varphi_{p - 1} &&= \textstyle\sum_{ i = 1 }^{ \frac{ p }{ 2 } - 1 } \varsigma_{2i} \varphi_{ p - 1 - 2i}; \\[1ex]
s^{p} &\colon \quad L \varphi_{p}^{} &&= \textstyle\sum_{ i = 1 }^{ \frac{p }{ 2 } - 1 } \varsigma_{2i}^{} \varphi_{ p - 2i}^{} - \varphi_1^{p} + \textstyle\fint_\T \varphi_1^{p}.
\end{alignedat}
\end{alignat*}
The first-order case yields that \({ \varphi_1 = \cos }\) and \({ \varsigma_0 = 1 }\), and we successively find that
\begin{equation} \label{eq:trivial-coefficients}
\varphi_2 = \dotsb = \varphi_{p - 1} = 0 \qquad \textnormal{and} \qquad \varsigma_2 = \dotsb = \varsigma_{p - 2} = 0.
\end{equation}
This leads to
\begin{equation*}
\varphi_{p}^{} = - L^{-1} \left( \varphi_1^{p} - \textstyle\fint_\T \varphi_1^{p} \right) = \Phi_1^{ \textnormal{even}}
\end{equation*}
by means of the even power-reduction formula
\begin{equation*}
\cos^{p} x - \textstyle\fint_\T \cos^{p} = \displaystyle\frac{ 1 }{ 2^{p - 1} } \sum\nolimits_{ j = 1 }^{ \frac{ p }{ 2 } - 1 } \binom{p}{j} \cos \left( (p - 2j) x \right).
\end{equation*}
Taking the prior results into account, we next examine higher-order coefficients for even~\({ p \geq 4 }\) with help of successive cancellations of terms that vanish:\vspace*{-.3em} 
\begin{alignat*}{5}
s^{p + 1}  & \colon \quad L \varphi_{p + 1} &&= \varsigma_{p} \varphi_1 && \qquad \Rightarrow \qquad \varphi_{p + 1} &&= 0, \quad \varsigma_{p} = 0; \\[1ex]
s^{p + 2}  & \colon \quad L \varphi_{p + 2} &&= 0 && \qquad \Rightarrow \qquad \varphi_{p + 2} &&= 0;\\[1ex]
\vdots & && && \qquad \vdotswithin{\Rightarrow} \\[1ex]
s^{2p - 3}  & \colon \quad L \varphi_{2p - 3} &&= \varsigma_{2p - 2} \varphi_1 && \qquad \Rightarrow \qquad \varphi_{2p - 3} &&= 0, \quad \varsigma_{2p - 2} = 0; \\[1ex]
s^{2p - 2}  & \colon \quad L \varphi_{2p - 2} &&= 0 && \qquad \Rightarrow \qquad \varphi_{2p - 2} &&= 0;\\[1ex]
s^{2p - 1}  & \colon \quad L \varphi_{2p - 1}^{} &&= \varsigma_{2p - 2}^{} \varphi_1^{} - p \varphi_1^{p - 1} \varphi_{p}^{} + \textstyle\fint_\T \bigl(p \varphi_1^{p - 1} \varphi_{p}^{}\bigr). \span\span\span\span
\end{alignat*}
From the last equation it follows that
\begin{equation*}
\varsigma_{2p - 2}^{} = \textnormal{the coefficient of \({ (\varphi_1^{} =\mathrel{)} \cos }\) in } p\left( \varphi_1^{p - 1} \varphi_{p}^{} - \textstyle\fint_\T \bigl(\varphi_1^{p - 1} \varphi_{p}^{}\bigr) \right) = C_1^{ \textnormal{even}}
\end{equation*}
with help of the odd power-reduction formula \({ \smash{\cos^q x = 2^{-q} \sum\nolimits_{ j = 1 }^{ \frac{ q - 1 }{ 2 } } \binom{q}{j} \cos \left( (q - 2j) x \right)} }\) for \({ q = p - 1 }\) and the product-to-sum identity for cosine.

Switching to odd \({ p \geq 5 }\), we similarly obtain that \({ \varphi_1 = \cos }\) and \({ \varsigma_0  = 1 }\) and that~\eqref{eq:trivial-coefficients} is true. Moreover, from
\begin{equation*}
s^{p} \colon \quad L \varphi_{p}^{} = \varsigma_{p - 1}^{} \varphi_1^{} - \varphi_1^{p} + \underbrace{\textstyle\fint_\T \varphi_1^{p}}_{\textnormal{\makebox[0pt]{\({ = 0 }\) for odd~\({ p }\)}}}
\end{equation*}
we finally deduce that
\begin{align*} \SwapAboveDisplaySkip
\varsigma_{p - 1}^{} &= \textnormal{the coefficient of \({ \cos }\) in } \varphi_1^{p} = C_1^{ \textnormal{odd}}
\shortintertext{and}
\varphi_{p}^{}  &= L^{-1} \bigl( \varsigma_{p - 1}^{} \varphi_1^{} - \varphi_1^{p} \bigr) = \Phi_1^{ \textnormal{odd}},
\end{align*}
again by the odd power-reduction formula.
\end{proof}

For odd~\({ p }\) we can improve upon \cref{thm:bifurcation-formulas-smooth} and obtain the overall structure of the bifurcation formulas near the line of trivial solutions. This shows that \({ \varphi^{ \epsilon = 0}(s) }\) is antisymmetric about~\({ - \frac{ \uppi }{ 2 } }\), and agrees with the general conjecture set forth in \cref{sec:regularity}.

\begin{proposition}[Local antisymmetry] \label{thm:local-antisymmetry}
Consider \({ n(x) = x^p }\) for odd~\({ p \geq 3 }\) and the choice of parametrisation in \cref{thm:bifurcation-formulas-smooth}. Then the analytic structure~\eqref{eq:analytic-expansion} of the local bifurcation formulas equals
\begin{equation*}
\varphi^{ \epsilon = 0}(s) = \sum_{j = 0}^\infty \varphi_{ j(p - 1) + 1} \, s^{ j(p - 1) + 1} \qquad \text{and} \qquad c^{ \epsilon = 0}(s) = \sum_{j = 0}^\infty \varsigma_{j(p - 1)} \, s^{j(p - 1)},
\end{equation*}
on~\({ \mathfrak{C}_{ \textnormal{loc}}^{\epsilon = 0} }\), where all the \({ \varphi_{ j(p - 1) + 1} }\) functions lie in \({ \smash{\displaystyle \mathcal{W} \coloneqq \smashoperator{\vectorspan_{ \textnormal{odd } k \geq 1}} \Set{ \cos (k \cdot) } } }\). Hence \({ \varphi_{ j(p - 1) + 1} }\) and thus also \({ \varphi^{ \epsilon = 0} }\) are antisymmetric about~\({ - \tfrac{ \uppi }{ 2 } }\).
\end{proposition}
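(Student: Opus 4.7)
The plan is to prove the gap structure and membership in~\({ \mathcal{W} }\) simultaneously by induction on the Taylor-coefficient index~\({ \ell }\). Antisymmetry about~\({ -\tfrac{\uppi}{2} }\) is then immediate: a direct computation gives \({ \cos\bigl(k\bigl(-\tfrac{\uppi}{2}+y\bigr)\bigr) + \cos\bigl(k\bigl(-\tfrac{\uppi}{2}-y\bigr)\bigr) = 2\cos\bigl(k\tfrac{\uppi}{2}\bigr)\cos(ky) }\), which vanishes identically in~\({ y }\) precisely when~\({ k }\) is odd. Hence every element of~\({ \mathcal{W} }\) is antisymmetric about~\({ -\tfrac{\uppi}{2} }\), which will propagate to each~\({ \varphi_{j(p-1)+1} }\) and to the full sum~\({ \varphi^{\epsilon=0}(s) }\).

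Substituting the analytic expansions~\eqref{eq:analytic-expansion} into \({ F^{\epsilon=0}(\varphi,c) = 0 }\) and identifying coefficients of~\({ s^\ell }\) yields the Lyapunov--Schmidt-type relation
\begin{equation*}
L \varphi_\ell = \varsigma_{\ell-1} \cos + \widetilde{R}_\ell, \qquad L \coloneqq \abs{\textnormal{D}}^{-\alpha} - \identityoperator,
\end{equation*}
in which \({ \widetilde{R}_\ell }\) is an algebraic expression in the lower-order quantities \({ \{\varphi_m,\varsigma_{m-1}\}_{1 \leq m \leq \ell-1} }\), composed of mixed products \({ \varsigma_j \varphi_k }\) (with \({ j,k \geq 1 }\) and \({ j+k = \ell }\)) and \({ p }\)-fold products \({ \varphi_{k_1}\cdots\varphi_{k_p} }\) (with \({ k_i \geq 1 }\) and \({ \sum k_i = \ell }\)) arising from \({ \varphi^p - \textstyle\fint_\T \varphi^p }\). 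The base step \({ \varphi_1 = \cos \in \mathcal{W} }\), \({ \varsigma_0 = 1 }\) is the first-order computation in \cref{thm:bifurcation-formulas-smooth}.

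For the inductive step, assume the statement for all \({ m < \ell }\). Any nonzero summand in~\({ \widetilde{R}_\ell }\) must then have each \({ \varsigma_j }\)-factor with \({ j \equiv 0 \pmod{p-1} }\) and each \({ \varphi_k }\)-factor with \({ k \equiv 1 \pmod{p-1} }\), which together force \({ \ell \equiv 1 \pmod{p-1} }\) (using \({ p \equiv 1 \pmod{p-1} }\)). If \({ \ell \not\equiv 1 \pmod{p-1} }\), then \({ \widetilde{R}_\ell = 0 }\); Fredholm solvability via \({ \cosineexpansion{\varsigma_{\ell-1}\cos + \widetilde{R}_\ell}_1 = 0 }\) gives \({ \varsigma_{\ell-1} = 0 }\), and the parametrisation \({ \cosineexpansion{\varphi_\ell}_1 = 0 }\) for \({ \ell \geq 2 }\) forces \({ \varphi_\ell = 0 }\). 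If instead \({ \ell \equiv 1 \pmod{p-1} }\), then \({ \widetilde{R}_\ell \in \mathcal{W} }\): product-to-sum identities expand \({ \cos(n_1 x)\cdots\cos(n_p x) }\) with odd~\({ n_i }\) into cosines of frequencies \({ \pm n_1 \pm \cdots \pm n_p }\), and any signed sum of an odd number of odd integers is itself odd. Solvability then uniquely determines \({ \varsigma_{\ell-1} = -\cosineexpansion{\widetilde{R}_\ell}_1 }\), and \({ \varphi_\ell = L^{-1}\bigl(\widetilde{R}_\ell - \cosineexpansion{\widetilde{R}_\ell}_1 \cos\bigr) }\) lies in~\({ \mathcal{W} }\) since \({ L }\) acts diagonally on the cosine basis and is invertible on the codimension-one subspace \({ \vectorspan_{k \geq 3\text{ odd}}\Set{\cos(k\cdot)} }\) of~\({ \mathcal{W} }\).

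The main obstacle will be the bookkeeping of residues modulo~\({ p - 1 }\) across both types of products in~\({ \widetilde{R}_\ell }\), together with verifying that \({ \mathcal{W} }\) is preserved under \({ p }\)-fold multiplication when \({ p }\) is odd---this ultimately reduces to the elementary parity fact highlighted above. The Fredholm framework is identical to the one already used in the proof of \cref{thm:bifurcation-formulas-smooth}, so no new analytic machinery is required.
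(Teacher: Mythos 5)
Your proposal is correct and follows essentially the same route as the paper's own proof: strong induction on the Taylor-coefficient index, using the Fredholm solvability condition to extract \({ \varsigma_{\ell-1} }\), the parametrisation \({ \cosineexpansion{\varphi_\ell}_1 = 0 }\) to pin down the correction, and the parity fact that a signed sum of an odd number of odd integers is odd to show \({ \mathcal{W} }\) is closed under \({ p }\)-fold products. The only cosmetic difference is presentational: the paper iterates the triple-cosine product identity to reach the odd-frequency conclusion and writes out the intermediate cascade \({ \varphi_{jq+2} = \dotsb = \varphi_{(j+1)q} = 0 }\) explicitly in blocks of \({ q = p - 1 }\), whereas you package the same cancellations into the cleaner observation that any nonzero summand of \({ \widetilde{R}_\ell }\) forces \({ \ell \equiv 1 \pmod{p-1} }\) and invoke the general \({ p }\)-fold product-to-sum expansion directly. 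Both formulations encode the identical combinatorics, so no new idea separates the two.
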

\vspace*{-.5em} 
\begin{remark}
\Cref{thm:local-antisymmetry} also hold in case~\eqref{eq:n-sgn} of~\eqref{eq:n-reg} with~\({ \epsilon > 0 }\), provided~\({ s }\) is sufficiently small. In this case the representations become
\begin{equation*}
\varphi^{ \epsilon}(s) = \textstyle\sum_{j = 0}^\infty \varphi_{ 2j + 1} \, s^{2j + 1} \qquad \text{and} \qquad c^{ \epsilon}(s) = \textstyle\sum_{j = 0}^\infty \varsigma_{2j} \, s^{2j},
\end{equation*}
as indicated by \cref{thm:bifurcation-formulas}.
\end{remark}
\vspace*{-.5em} 
\begin{proof}
We use strong induction, where the base case is given by \cref{thm:bifurcation-formulas-smooth}. Let \({ q \coloneqq p  - 1 }\) and suppose the result is true for \({ \Set{ 0q, \dotsc,  jq } }\) for some~\({ j \geq 0 }\). Now consider case~\({ (j + 1)q }\). As in the proof of \cref{thm:bifurcation-formulas,thm:bifurcation-formulas-smooth}, we insert~\eqref{eq:analytic-expansion} into~\eqref{eq:steady}, identify terms of equal order in~\({ s }\), and simplify by means of successive cancellations, with \({ L \coloneqq \abs{ \textnormal{D} }^{- \alpha} - \varsigma_0 \identityoperator }\):
\begingroup
\allowdisplaybreaks
\begin{alignat*}{5}
s^{jq + 2}  & \colon \quad L \varphi_{jq + 2} &&= 0 && \qquad \Rightarrow \qquad \varphi_{jq + 2} &&= 0;\\[1ex]
s^{ jq + 3}  & \colon \quad L \varphi_{jq + 3} &&= \varsigma_{jq + 2} \varphi_1 && \qquad \Rightarrow \qquad \varphi_{jq + 3} &&= 0, \quad \varsigma_{jq + 2} &&= 0; \\[1ex]
\vdots & && && \qquad \vdotswithin{\Rightarrow} \\[1ex]
s^{(j + 1)q - 1}  & \colon \quad L \varphi_{(j + 1)q - 1} &&= \varsigma_{(j + 1)q - 2} \varphi_1 && \qquad \Rightarrow \qquad \varphi_{(j + 1)q - 1} &&= 0, \quad \varsigma_{(j + 1)q - 2} &&= 0; \\[1ex]
s^{(j + 1)q}  & \colon \quad L \varphi_{(j + 1)q} &&= 0 && \qquad \Rightarrow \qquad \varphi_{(j + 1)q} &&= 0;\\[1ex]
s^{(j + 1)q + 1}  & \colon \quad L \varphi_{ (j + 1)q + 1} &&= \textstyle\sum_{ i = 1 }^{ j + 1 } \varsigma_{ iq} \varphi_{ (j - i + 1)q + 1 } - \Lambda, \span\span\span\span
\end{alignat*}
\endgroup
where
\begin{align*}
\Lambda &\coloneqq \smash[t]{\Biggl\{ \textstyle\binom{ p}{1} \varphi_1^{ q} \varphi_{ jq + 1}^{} + \textstyle\binom{ p }{2} \varphi_1^{ q - 1} \biggl( \varphi_{ q + 1}^{} \varphi_{ (j - 1)q + 1}^{} + \varphi_{ 2q + 1}^{} \varphi_{ (j - 2)q + 1}^{} + \dotsb +  \varphi_{ \lfloor \frac{ j }{ 2 } \rfloor q + 1} \varphi_{ \lceil \frac{ j }{ 2 } \rceil q + 1} \biggr)} \\[1ex]
& \hphantom{\coloneqq \Biggl\{} + \dotsb + \textstyle\binom{ p}{j} \varphi_1^{ q - j + 1} \varphi_{ q + 1}^{ j } \Biggr\}
\end{align*}
follows by expanding \({ (\varphi^{\epsilon = 0}(s))^p }\). Then we obtain
\begin{equation*}
\varsigma_{ (j + 1)q} = \textnormal{coefficient of \({(\varphi_1=) \cos }\) in } \Lambda \qquad \text{and} \qquad \varphi_{ (j + 1)q + 1}^{} = L^{-1} \left( \textstyle\sum_{ i = 1 }^{ j + 1 } \varsigma_{ iq}^{} \varphi_{ (j - i + 1) q + 1 }^{} - \Lambda) \right).
\end{equation*}
By the induction hypothesis we know that \({ \varphi_{ \widetilde{ j } q + 1} \in \mathcal{W} }\) for all \({ 0 \leq \widetilde{ j } \leq j }\). Moreover, each term in \({ \Lambda }\) is the product of an \emph{odd} number of (some of) the terms~\({ \varphi_{ \overline{ j } q + 1} }\), with repetitions allowed. This establishes the result by noting that \({ \mathcal{W} }\) is algebraically closed under an odd number of multiplications, which can be deduced from the identity
\begin{equation*}
4 \cos u \cos v \cos w = \cos(\underbrace{u + v+ w}_{ = \textnormal{ odd}}) + \cos(\underbrace{-u + v + w}_{ = \textnormal{ odd}}) + \cos(\underbrace{u - v + w}_{ = \textnormal{ odd}}) + \cos(\underbrace{u + v - w}_{ = \textnormal{ odd}})
\end{equation*}
whenever \({ u }\), \({ v }\), and \({ w }\) are odd. General products reduce iteratively to triple products.
\end{proof}
\vspace*{-.5em} 

Although \cref{thm:local-antisymmetry} is promising, it is not clear to us how one can prove antisymmetry everywhere along \({ \mathfrak{C}_{ \textnormal{loc}}^\epsilon }\) and its upcoming global extension. Thus we instead redefine \({ \mathcal{X}^{ \beta} }\) in case~\eqref{eq:n-sgn} as the subspace
\begin{equation*}
\Set*{ \varphi \in \mathring{ \textnormal{C}}_{ \textnormal{even}}^\beta(\T) \given \varphi \textnormal{ is antisymmetric about } {-} \tfrac{ \uppi }{ 2 } \textnormal{, that is, } \varphi(\cdot + \uppi) = - \varphi },
\end{equation*}
for which correspondingly \({ \ker \partial_{ \varphi} F^\epsilon(0, k^{ - \alpha } ) = \mathcal{W} }\) and \cref{thm:local-bifurcation,thm:bifurcation-formulas} hold for odd~\({ k }\).

We proceed to analyse the global structure of an extension of \({\mathfrak{C}_{ \textnormal{loc}}^\epsilon }\) in \cref{thm:local-bifurcation}. To this end, let
\begin{equation*}
\mathcal{S}^\epsilon \coloneqq \Set[\big]{ (\varphi, c) \in \mathcal{U}^\epsilon \given F^\epsilon(\varphi, c) = 0 }
\end{equation*}
be the set of admissible solution pairs, where
\begin{equation*}
\mathcal{U}^\epsilon \coloneqq \Set[\big]{ (\varphi, c) \in \mathcal{X}^{ \beta } \times \R_+ \given (n^\epsilon)'(\varphi) < c  }
\end{equation*}
is an open set whose boundary contains any solution pair of~\eqref{eq:steady-reg} with the desirable regularity features in \cref{thm:regularity}. We first note the following property of~\({ \mathcal{S}^\epsilon }\).

\begin{lemma} \label{thm:compactness-solution-set}
Bounded, closed subsets of \({ \mathcal{S}^\epsilon }\) are compact in~\({ \mathcal{X}^{ \beta } \times \R_+ }\).
\end{lemma}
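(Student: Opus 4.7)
Given a sequence $\{(\varphi_j, c_j)\}_j$ in a bounded, closed subset $A \subset \mathcal{S}^\epsilon$, the plan is to extract a subsequence converging in $\mathcal{X}^\beta \times \R_+$ to a limit in $A$. First, $(c_j)$ is bounded in $\R_+$ and so admits a convergent subsequence with $c_j \to c \geq 0$. Since $(\varphi_j)$ is bounded in $\mathcal{X}^\beta$ and the embedding $\mathcal{X}^\beta \hookrightarrow \mathring{\textnormal{C}}(\T)$ is compact via Arzelà--Ascoli, a further subsequence satisfies $\varphi_j \to \varphi$ uniformly, with $\varphi \in \mathcal{X}^\beta$ by lower semicontinuity of the Hölder seminorm. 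Because $K_\alpha \in \textnormal{L}^1(\T)$ by \cref{thm:periodic-kernel}, $\abs{\textnormal{D}}^{-\alpha}$ is continuous on $\mathring{\textnormal{C}}(\T)$, and continuity of $n^\epsilon$ together with the uniform convergence lets me pass to the limit in $F^\epsilon(\varphi_j, c_j) = 0$ in the uniform norm, yielding $F^\epsilon(\varphi, c) = 0$.

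The crux is upgrading uniform convergence of $\varphi_j$ to $\mathcal{X}^\beta$-convergence. The key observation is that $\abs{\textnormal{D}}^{-\alpha}\colon \mathcal{X}^\beta \to \mathcal{X}^{\beta+\alpha}_\ast$ is $\alpha$-smoothing; together with the compact embedding $\mathcal{X}^{\beta+\alpha}_\ast \hookrightarrow \mathcal{X}^\beta$, this makes $\abs{\textnormal{D}}^{-\alpha}$ a compact operator on $\mathcal{X}^\beta$. Hence $\abs{\textnormal{D}}^{-\alpha}\varphi_j \to \abs{\textnormal{D}}^{-\alpha}\varphi$ strongly in $\mathcal{X}^\beta$ (uniqueness of limits from the uniform convergence pinning down the whole subsequence), and rearranging~\eqref{eq:steady-reg} as
\begin{equation*}
N^\epsilon(\varphi_j; c_j) = \abs{\textnormal{D}}^{-\alpha}\varphi_j - \textstyle\fint_\T n^\epsilon(\varphi_j)
\end{equation*}
gives convergence of the right-hand side in $\mathcal{X}^\beta$ to $N^\epsilon(\varphi; c)$. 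Closedness of $A$ in $\mathcal{X}^\beta \times \R_+$ combined with $A \subset \mathcal{U}^\epsilon$ then forces $(\varphi, c) \in \mathcal{S}^\epsilon$, so that $(n^\epsilon)'(\varphi) < c$ strictly. Uniform convergence provides a uniform gap $c_j - \max_x(n^\epsilon)'(\varphi_j) \geq \delta > 0$ for large $j$, so the smooth inverse $N^\epsilon(\cdot; c_j)^{-1}$ is uniformly well-defined on a common neighbourhood of the ranges of all $\varphi_j$; invoking continuity of composition operators with smooth functions on $\mathring{\textnormal{C}}^\beta(\T)$ as in the proof of \cref{thm:smoothness} (via~\autocite{GoeSac1999a}), inversion yields $\varphi_j \to \varphi$ in $\mathcal{X}^\beta$, and by closedness $(\varphi, c) \in A$.

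The main obstacle is excluding the degenerate limiting scenario $(n^\epsilon)'(\varphi) = c$ at some point, which would destroy the local invertibility of $N^\epsilon(\cdot; c)$ and therefore the uniform-gap argument. This is ruled out by the closedness of $A$ in $\mathcal{X}^\beta \times \R_+$ in conjunction with the exact regularity theory of \cref{sec:regularity}: a limit touching the critical value~$\mu^\epsilon$ would, by \cref{thm:regularity}, only be $\alpha$-Hölder continuous, which is incompatible with $\mathcal{X}^\beta$-boundedness for $\beta > \alpha$ and hence with $(\varphi, c) \in A$.
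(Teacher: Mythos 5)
Your proof follows the same broad strategy as the paper's: both bootstrap the smoothing of \(\abs{\textnormal{D}}^{-\alpha}\) with invertibility of \(N^\epsilon(\cdot\,; c)\) --- that is, the operator \(G\) from \eqref{eq:composition-operator} --- to upgrade a weak limit to an \(\mathcal{X}^\beta\)-limit. The paper states this compactly: \(G\) maps bounded subsets of \(\mathcal{S}^\epsilon\) into bounded subsets of \(\textnormal{C}^m\), which are relatively compact in \(\mathcal{X}^\beta\). You unroll this into a two-step argument: \(\abs{\textnormal{D}}^{-\alpha}\varphi_j\) converges strongly in \(\mathcal{X}^\beta\) by compactness of the operator, hence so does \(N^\epsilon(\varphi_j; c_j)\), and then you invert. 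That is a legitimate route and somewhat more explicit about where the compactness comes from.

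There are two issues. First, the sentence ``Closedness of \(A\) \dots\ then forces \((\varphi, c) \in \mathcal{S}^\epsilon\), so that \((n^\epsilon)'(\varphi) < c\) strictly'' is circular as placed: concluding \((\varphi, c)\in A\) from closedness requires convergence in the \(\mathcal{X}^\beta\)-topology, which is exactly what you are still trying to establish. You notice this and relegate the real argument to the final paragraph, so it reads more as a presentational slip than a logical one, but you should reorder so that the gap argument comes before inversion.

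Second, and more substantively, the argument in your final paragraph to exclude the degenerate scenario by invoking \cref{thm:regularity} has a genuine gap. \Cref{thm:regularity}, and the lower-bound estimate of \cref{thm:lower-bound-zero} that drives the sharp one-sided inequality \(\mu^\epsilon-\varphi(x)\gtrsim\abs{x}^\alpha\), are hypothesised for solutions that are \emph{increasing on \((-\uppi,0)\)} with maximum at the origin. Elements of \(\mathcal{S}^\epsilon\) (and uniform limits thereof) are merely even and zero-mean; no monotonicity is built in, and the maximum of the limit need not sit at \(x=0\). Without monotonicity the symmetrisation arguments behind that lower bound do not obviously apply, so you cannot yet conclude that a limit touching \(\mu^\epsilon\) fails to be \(\textnormal{C}^\beta\). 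Notably, the paper's one-line claim that \(G\) ``boundedly maps \(\mathcal{S}^\epsilon\) into \(\textnormal{C}^m\)'' also passes silently over the possible degeneration of the gap \(c-\max_x(n^\epsilon)'(\varphi)\) along bounded sequences in \(\mathcal{S}^\epsilon\), so you have in fact isolated a subtle point; but your proposed fix via \cref{thm:regularity} needs more than that lemma's hypotheses supply, and as written it does not close the hole.
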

\vspace*{-.8em} 
\enlargethispage{\baselineskip} 
\begin{proof}
It follows from \cref{thm:smoothness} and its proof, that the operator~\({ G }\) in~\eqref{eq:composition-operator}---adapted with \({ N^\epsilon }\) replacing the nonsmooth~\({ N }\)---sends \({ (\varphi, c) }\) to~\({ \varphi }\) on~\({ \mathcal{S}^\epsilon }\) and boundedly maps~\({ \mathcal{S}^\epsilon }\) into \({ \textnormal{C}^{m} }\) for any~\({ m \geq 1 }\). Since \({ \mathcal{X}^{ \beta' } }\) is compactly embedded in \({ \mathcal{X}^{ \beta } }\) for \({ \beta' > \beta }\), we find that
\begin{equation*}
G \textnormal{ maps bounded subsets of } \mathcal{S}^\epsilon \textnormal{ into relatively compact subsets of } \mathcal{X}^{ \beta }.
\end{equation*}
In particular, if \({ \Set{ (\varphi_j, c_j) }_j }\) is a sequence in a bounded subset~\({ \mathcal{B} \subseteq \mathcal{S}^\epsilon }\), then a subsequence of \({ \Set{ \varphi_j }_j }\) converges in~\({ \mathcal{X}^{ \beta } }\), which together with the Bolzano--Weierstrass theorem imply that a subsequence of \({ \Set{ (\varphi_j, c_j) }_j }\) converges in the \({ \mathcal{X}^{ \beta } \times \R_+ }\)-topology. Thus if \({ \mathcal{B} }\) also is closed, it is compact.
\end{proof}
By means of \cref{thm:compactness-solution-set} and the fact that \({ c^\epsilon(s) }\) is not identically constant due to \cref{thm:bifurcation-formulas}, we may appeal to~\autocite[Theorem~9.1.1]{BufTol2003a} and obtain a global extension of~\({ \mathfrak{C}_{ \textnormal{loc}}^\epsilon }\). Note that we do not distinguish between a curve and its image.

\begin{theorem}[Global bifurcation] \label{thm:global-bifurcation}
\({ \mathfrak{C}_{ \textnormal{loc}}^\epsilon }\) extends to a global continuous curve \({ \mathfrak{C}^\epsilon \colon \R_+ \to \mathcal{S}^\epsilon }\) of solution pairs \({ \mathfrak{C}^\epsilon(s) = (\varphi^\epsilon(s), c^\epsilon(s)) }\), and either
\begin{tasks}[label={\textcolor{artcolor}{\roman*)}},ref={\roman*)}](3)
\task \label{thm:global-bifurcation-blowup} \({ \displaystyle\lim_{ s \to \infty } \norm{ \mathfrak{C}^\epsilon(s) }_{ \mathcal{X}^{ \beta } \times \R_+ } = \infty }\),
\task \label{thm:global-bifurcation-boundary} \({ \distance{ \mathfrak{C}^\epsilon }{ \partial \mathcal{U}^\epsilon} = 0 }\), \qquad or
\task \label{thm:global-bifurcation-periodic} \({ \mathfrak{C}^\epsilon }\) is periodic.
\end{tasks}
\end{theorem}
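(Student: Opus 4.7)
The plan is to verify the hypotheses of the analytic global bifurcation theorem of Buffoni and Toland~\autocite[Theorem~9.1.1]{BufTol2003a} applied to the map \({ F^\epsilon \colon \mathcal{U}^\epsilon \to \mathcal{X}^{ \beta } }\) defined in~\eqref{eq:steady-reg}, starting from the local curve \({ \mathfrak{C}^{\epsilon}_{ \textnormal{loc}} }\) furnished by \cref{thm:local-bifurcation}. The required ingredients are essentially already in place: (a)~\({ F^\epsilon }\) is real-analytic on \({ \mathcal{U}^\epsilon }\) because the Fourier multiplier \({ \abs{ \textnormal{D} }^{-\alpha} }\) is linear bounded into \({ \mathcal{X}^{ \beta } }\) (by the local \({ \alpha }\)-smoothing of \cref{thm:local-smoothing}), the regularised nonlinearity \({ n^\epsilon }\) in~\eqref{eq:n-reg} is analytic on \({ \R }\), and composition with an analytic real function preserves analyticity on the Hölder scale; (b)~at every point of the candidate curve the Fréchet derivative \({ \partial_\varphi F^\epsilon(\varphi, c) = \abs{ \textnormal{D} }^{-\alpha} - c \identityoperator + (n^\epsilon)'(\varphi) \identityoperator }\) is a compact perturbation of the (invertible, on \({ \mathcal{U}^\epsilon }\)) multiplier \({ (n^\epsilon)'(\varphi) - c }\) times the identity, and therefore Fredholm of index~\({ 0 }\); (c)~the compactness property of bounded, closed subsets of \({ \mathcal{S}^\epsilon }\) is exactly \cref{thm:compactness-solution-set}; and (d)~the bifurcation is non-degenerate because \cref{thm:bifurcation-formulas} shows \({ s \mapsto c^\epsilon(s) }\) is not identically constant near \({ s = 0 }\), so the local curve is genuinely one-dimensional.

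With these in hand, the abstract theorem produces a maximal continuous extension \({ \mathfrak{C}^\epsilon \colon \R_+ \to \mathcal{S}^\epsilon }\) that is locally real-analytic away from a discrete set of parameters (so-called turning points), and asserts one of the alternatives \ref{thm:global-bifurcation-blowup}--\ref{thm:global-bifurcation-periodic}. More precisely, the Buffoni--Toland trichotomy states that either the curve leaves every bounded subset of the admissible open set \({ \mathcal{U}^\epsilon }\), or it accumulates on the boundary \({ \partial \mathcal{U}^\epsilon }\), or it returns to its starting point and forms a closed loop. Translating these to the present setting yields precisely norm blow-up in \({ \mathcal{X}^{ \beta } \times \R_+ }\) (note that \({ \mathcal{U}^\epsilon }\) is defined by a pointwise inequality plus the wave-speed constraint \({ c \geq 0 }\), and the \({ c = 0 }\) part of \({ \partial \mathcal{U}^\epsilon }\) is absorbed into \ref{thm:global-bifurcation-boundary} while the unbounded direction is \ref{thm:global-bifurcation-blowup}), the approach to the touching set \({ (n^\epsilon)'(\varphi) = c }\), or periodicity.

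The two steps that deserve the most care are the Fredholm verification and the closure of the solution set inside \({ \mathcal{U}^\epsilon }\). For the former, the key observation is that \({ \abs{ \textnormal{D} }^{-\alpha} }\) is \({ \alpha }\)-smoothing on the Hölder--Zygmund scale and therefore compact \({ \mathcal{X}^{ \beta } \to \mathcal{X}^{ \beta } }\), while the multiplication operator \({ (n^\epsilon)'(\varphi) \identityoperator - c\identityoperator }\) is bounded and---on \({ \mathcal{U}^\epsilon }\)---boundedly invertible because the coefficient is strictly negative and bounded away from zero pointwise (by compactness of \({ \T }\) and continuity of \({ (n^\epsilon)'(\varphi) - c }\)); thus \({ \partial_\varphi F^\epsilon(\varphi, c) }\) is the sum of a boundedly invertible operator and a compact one, and hence Fredholm of index zero. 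For the latter, one uses that \({ \mathcal{S}^\epsilon }\) is closed inside \({ \mathcal{U}^\epsilon }\) by continuity of \({ F^\epsilon }\), so that the only way the maximal continuation can terminate is by exiting \({ \mathcal{U}^\epsilon }\)---which is captured by alternatives \ref{thm:global-bifurcation-blowup} and~\ref{thm:global-bifurcation-boundary}---or by closing up into a loop, which is alternative~\ref{thm:global-bifurcation-periodic}.

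The main (and only genuine) obstacle is not in the bifurcation machinery itself, but in keeping track of the subspace structure in case~\eqref{eq:n-sgn}: one must confirm that the antisymmetric subspace we chose for \({ \mathcal{X}^{ \beta } }\) is invariant under both \({ \abs{ \textnormal{D} }^{-\alpha} }\) and \({ \varphi \mapsto n^\epsilon(\varphi) - \fint_{\T} n^\epsilon(\varphi) }\), so that \({ F^\epsilon }\) genuinely maps \({ \mathcal{X}^{ \beta } \times \R_+ \to \mathcal{X}^{ \beta } }\) there, and that the transversality computation in \cref{thm:bifurcation-formulas} still goes through for odd~\({ k }\) within this smaller space. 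Invariance under \({ \abs{ \textnormal{D} }^{-\alpha} }\) follows from evenness and periodicity of the kernel \({ K_\alpha }\), and invariance under the odd nonlinearity \({ n^\epsilon }\) in case~\eqref{eq:n-sgn} from \({ n^\epsilon(-x) = -n^\epsilon(x) }\). Once this is checked, the proof reduces to a direct application of~\autocite[Theorem~9.1.1]{BufTol2003a}.
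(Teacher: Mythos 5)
Your proposal follows the same route as the paper: a direct appeal to the Buffoni--Toland analytic global bifurcation theorem~\autocite[Theorem~9.1.1]{BufTol2003a}, with the two explicit inputs being the compactness of bounded closed subsets of \( \mathcal{S}^\epsilon \) (Lemma~\ref{thm:compactness-solution-set}) and the non-constancy of \( s \mapsto c^\epsilon(s) \) from the bifurcation formulas; the paper's own proof is essentially a one-sentence invocation of these, whereas you spell out the analyticity and Fredholm ingredients that the paper treats implicitly or near the local-bifurcation step. One small imprecision to tidy: the Fréchet derivative is \( \partial_\varphi F^\epsilon(\varphi, c)\psi = \abs{\textnormal{D}}^{-\alpha}\psi - c\psi + (n^\epsilon)'(\varphi)\psi - \fint_\T (n^\epsilon)'(\varphi)\psi \) --- you dropped the rank-one mean-correction term, and the pure multiplication operator by \( (n^\epsilon)'(\varphi) - c \) does not by itself map the zero-mean space \( \mathcal{X}^\beta \) to itself --- but since that correction is finite rank it does not disturb the Fredholm-index-zero conclusion.
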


We shall prove that possibility~\ref{thm:global-bifurcation-periodic} does not happen and that possibilities~\ref{thm:global-bifurcation-blowup} and~\ref{thm:global-bifurcation-boundary} occur simultaneously, from which it will follow that one finds a highest, \({ \alpha }\)-Hölder continuous wave as a limit along~\({ \mathfrak{C}^\epsilon }\).

In order to eliminate the possibility that \({ \mathfrak{C}^\epsilon }\) is periodic, we make use of a conic refinement of the global bifurcation theorem~\autocite[Theorem~9.1.1]{BufTol2003a}. Specifically, let
\begin{equation*}
\mathcal{K} \coloneqq \Set*{ \varphi \in \mathcal{X}^{ \beta } \given \varphi \textnormal{ is increasing on } (- \uppi, 0) }
\end{equation*}
be a closed cone in \({ \mathcal{X}^{ \beta } }\) and observe that \({ \mathfrak{C}^\epsilon(s) \in \mathcal{K} \times \R_+ }\) for sufficiently small~\({ s }\). Indeed, cosine is strictly increasing on~\({ (- \uppi, 0) }\) and strict monotonicity is stable under \({ \textnormal{C}^1 }\)-perturbations on a compact set~(here,~\({ \T }\)). Therefore, since \({ \varphi^\epsilon(s) = s \cos + \mathcal{O}(s^2) }\) from \cref{thm:local-bifurcation,thm:bifurcation-formulas} is smooth on~\({ \T }\) by \cref{thm:smoothness} (adapted to~\eqref{eq:steady-reg} with~\({ n^\epsilon }\)), it holds that \({ \varphi^\epsilon(s) \in \mathcal{K} \setminus \Set{ 0 } }\) for small~\({ s = o(\epsilon) }\). In fact, this is true globally.

\begin{proposition} \label{thm:cone}
\({ \varphi^\epsilon(s) \in \mathcal{K} \setminus \Set{ 0 } }\) for all \({ s > 0 }\) and \({ 0 < \epsilon \ll 1 }\). In particular, \({ \mathfrak{C}^\epsilon }\) never returns (for finite~\({ s }\)) to the line of trivial solutions, thereby ruling out possibility~\ref{thm:global-bifurcation-periodic} in \cref{thm:global-bifurcation}.
\end{proposition}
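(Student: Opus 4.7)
The plan is to prove $\varphi^\epsilon(s) \in \mathcal{K} \setminus \{0\}$ for all $s > 0$ by a connectedness argument on the set $I := \{s \in (0, \infty) : \varphi^\epsilon(s) \in \mathcal{K} \setminus \{0\}\}$, showing $I = (0, \infty)$. The ``in particular'' clause is then immediate: if $\mathfrak{C}^\epsilon$ were periodic with period $T > 0$, then $\varphi^\epsilon(T) = \varphi^\epsilon(0) = 0$ would contradict $T \in I$.

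First, $I$ contains a right-neighbourhood of $0$. By \cref{thm:bifurcation-formulas}, $\varphi^\epsilon(s) = s\cos(\cdot) + \mathcal{O}(s^2)$ in $\mathcal{X}^\beta$, and since every point of $\mathfrak{C}^\epsilon$ lies in $\mathcal{S}^\epsilon \subset \mathcal{U}^\epsilon$, \cref{thm:smoothness} (with the analytic $n^\epsilon$) combined with bootstrapping the composition-operator equation~\eqref{eq:composition-operator} upgrades continuity of $s \mapsto \varphi^\epsilon(s)$ to $\textnormal{C}^2(\T)$. The strict monotonicity $\cos' > 0$ on $(-\uppi, 0)$ together with the endpoint signs $\cos''(0) < 0$ and $\cos''(-\uppi) > 0$ are open $\textnormal{C}^2$-conditions, so they persist for sufficiently small $s > 0$, placing $\varphi^\epsilon(s) \in \mathcal{K} \setminus \{0\}$.

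Openness of $I$ is the direct payoff of \cref{thm:nodal-pattern}. For $s_0 \in I$, the strict inequality $(n^\epsilon)'(\varphi^\epsilon(s_0)) < c^\epsilon(s_0)$ on $\mathcal{U}^\epsilon$ makes $\varphi^\epsilon(s_0)$ smooth, and the first part of \cref{thm:nodal-pattern} yields $(\varphi^\epsilon(s_0))' > 0$ strictly on $(-\uppi, 0)$ together with $(\varphi^\epsilon(s_0))''(0) < 0$ and $(\varphi^\epsilon(s_0))''(-\uppi) > 0$. These open $\textnormal{C}^2$-conditions survive the $\textnormal{C}^2$-continuity of the curve established above, so a neighbourhood of $s_0$ lies in $I$.

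Closedness of $I$ in $(0, \infty)$ is the main obstacle. If $s_n \in I$ with $s_n \to s_\infty \in (0, \infty)$, then continuity and closedness of $\mathcal{K}$ give $\varphi^\epsilon(s_\infty) \in \mathcal{K}$, and it remains to rule out $\varphi^\epsilon(s_\infty) = 0$. Should this occur, $(0, c^\epsilon(s_\infty))$ would be a trivial solution, so $c^\epsilon(s_\infty) = k^{-\alpha}$ for some integer $k \geq 1$ by the Fredholm structure underlying \cref{thm:local-bifurcation}, and local uniqueness of the Crandall--Rabinowitz branch would force $\varphi^\epsilon(s_n) = \sigma_n \cos(k\cdot) + \mathcal{O}(\sigma_n^2)$ along $\mathfrak{C}^\epsilon_{\textnormal{loc}, k}$ with $\sigma_n \to 0$. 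For $k \geq 2$ one has $\cos(k \cdot)$ not monotone on $(-\uppi, 0)$, contradicting $\varphi^\epsilon(s_n) \in \mathcal{K}$. The delicate point, which I expect to be the main obstacle, is $k = 1$: analytic uniqueness would then force $\mathfrak{C}^\epsilon$ near $s_\infty$ to be a reparametrization of $\mathfrak{C}^\epsilon_{\textnormal{loc}}$, and by analytic continuation fold $\mathfrak{C}^\epsilon$ into a closed loop of period $s_\infty$---precisely possibility~\ref{thm:global-bifurcation-periodic}. The resolution proceeds by observing that $\mathfrak{C}^\epsilon$ both enters and leaves the neighbourhood of $(0,1)$ from within $\mathcal{K}$, hence along the same $\sigma > 0$ side of $\mathfrak{C}^\epsilon_{\textnormal{loc}}$; by analytic injectivity of that local branch, any such loop must degenerate, contradicting the non-constant leading asymptotic $c^\epsilon(s) > 1$ for small $s > 0$ read off from \cref{thm:bifurcation-formulas}.
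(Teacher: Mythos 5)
Your proposal is structurally close to the paper's argument: both rest on the idea that strict monotonicity plus the endpoint second-derivative signs are open conditions in $\textnormal{C}^2$, and that the composition operator $G$ from~\eqref{eq:composition-operator} upgrades $\mathcal{X}^\beta$-closeness of solution pairs to $\textnormal{C}^2$-closeness (the paper spells out, and you elide, that this requires continuity of $G$ jointly in $(\varphi, c)$ because $(N^\epsilon)^{-1}$ depends on the wave speed). The openness part of your argument and the paper's interiority claim are essentially the same step.

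Where the two proofs diverge is in how they finish. The paper does not run a hand-made open-closed argument; having established that every point of $\mathfrak{C}^\epsilon$ in $(\mathcal{K}\setminus\{0\})\times\R_+$ is interior relative to $\mathcal{S}^\epsilon$, it invokes Buffoni--Toland's Theorem~9.2.2, the conic refinement of the global bifurcation theorem, which is precisely the abstract device that converts this interiority into the conclusion that $\mathfrak{C}^\epsilon(s)\in(\mathcal{K}\setminus\{0\})\times\R_+$ for all $s>0$. You instead attempt to re-derive this by connectedness of $I=\{s>0:\varphi^\epsilon(s)\in\mathcal{K}\setminus\{0\}\}$, and the closedness step is where the gap sits. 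Your dispatch of the case $c^\epsilon(s_\infty)=k^{-\alpha}$, $k\geq 2$, via non-monotonicity of $\cos(k\cdot)$ is fine, but your treatment of $k=1$ is not a proof. You correctly identify that a return to $(0,1)$ via the local $\sigma>0$ side is "precisely possibility~\ref{thm:global-bifurcation-periodic}," but ruling out that possibility is the very thing Proposition~\ref{thm:cone} is supposed to accomplish, so invoking its negation is circular. The phrase "by analytic injectivity of that local branch, any such loop must degenerate, contradicting the non-constant leading asymptotic $c^\epsilon(s)>1$" does not constitute an argument: nothing prevents $c^\epsilon$ from rising above $1$ and later returning to $1$, and local injectivity of the Crandall--Rabinowitz arc says nothing about whether the distinguished global arc can re-enter its own starting neighbourhood. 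Making your sketch rigorous would amount to re-proving Theorem~9.2.2; citing that theorem, as the paper does, is the cleaner and correct route.
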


\begin{proof}
According to \autocite[Theorem~9.2.2]{BufTol2003a}, it suffices to show that each \({ (\varphi^\epsilon, c^\epsilon) }\) on~\({ \mathfrak{C}^\epsilon }\) which also belongs to \({ (\mathcal{K} \setminus \Set{ 0 }) \times \R_+ }\) lies in the interior of \({ (\mathcal{K} \setminus \Set{ 0 }) \times \R_+ }\) in the topology of~\({ \mathcal{S}^\epsilon }\). To this end, observe by \cref{thm:smoothness,thm:nodal-pattern} that such \({ \varphi^\epsilon }\) with speed~\({ c^\epsilon }\) is smooth and satisfies \({ (\varphi^\epsilon)' > 0 }\) on~\({ (- \uppi, 0) }\), with \({ (\varphi^\epsilon)''(0) < 0 }\) and \({ (\varphi^\epsilon)''(- \uppi) > 0 }\). Now let \({ (\phi, d) \in \mathcal{S}^\epsilon }\) be another solution (not necessarily on~\({ \mathfrak{C}^\epsilon }\)) lying within \({ \delta }\)-distance to~\({ (\varphi^\epsilon, c^\epsilon) }\) in~\({ \mathcal{X}^{ \beta } \times \R_+ }\). Then \({ \phi }\) and \({ d }\) are nonzero, and \({ \phi }\) is also smooth (\cref{thm:smoothness}). Moreover, \({ (N^\epsilon)^{-1} }\) is smooth---also as a function of the wave speed. Hence, it follows from~\autocite[Theorems~2.2, 4.2 and~5.2]{GoeSac1999a} and iteration of the smoothing effect of~\({ \abs{ \textnormal{D} }^{- \alpha} }\) that \({ G }\) in~\eqref{eq:composition-operator} (with \({ N^\epsilon }\) replacing~\({ N }\)) is a continuous map \({ \mathcal{S}_{}^\epsilon \to \mathcal{S}_1^\epsilon \cap \mathcal{X}^m }\) for any integer~\({ m \geq 1 }\), where \({ \mathcal{S}_1^\epsilon }\) is the functional component of~\({ \mathcal{S}^\epsilon }\). As such,
\begin{equation*}
\norm{ \phi - \varphi^\epsilon }_{ \mathring{\textnormal{C}}^2(\T)} = \norm{ G(\phi, d) - G(\varphi^\epsilon, c^\epsilon) }_{ \mathring{\textnormal{C}}^2(\T) } < \tau(\delta)
\end{equation*}
when \({ \norm{ (\phi, d) - (\varphi^\epsilon, c^\epsilon) }_{ \mathcal{X}^\beta \times \R_+ } < \delta }\). Thus for sufficiently small~\({ \delta }\), one deduces that \({ \phi }\) is strictly increasing on~\({ (- \uppi, 0) }\), so that \({ \phi \in \mathcal{K} \setminus \Set{ 0 } }\).
\end{proof}

\begin{remark}
Proofs of similar results (for instance \autocite[Theorem~6.7]{EhrWah2019a}, \autocite[Proposition~5.9]{BruDha2021a}, and~\autocite[Theorem~4.6]{Arn2019b}) as \cref{thm:cone} seem to disregard that \({ G }\) depends on the wave speed. But \({ G(\varphi, d) }\) does not necessarily equal~\({ \varphi }\) when \({ d \neq c }\) and~\({ (\varphi, c) \in \mathcal{S}^\epsilon }\), and it is key to work with open \({ \delta }\)-balls around solution \emph{pairs} \({ (\varphi, c) \in \mathcal{S}^\epsilon }\) and not only around solutions~\({ \varphi }\).
\end{remark}
\Cref{thm:lower-bound-speed} (adapted to~\eqref{eq:steady-reg} with~\({ n^\epsilon }\)) and \cref{thm:cone} immediately imply the following result.

\begin{corollary} \label{thm:lower-bound-speed-bifurcation}
The wave speed~\({ c^\epsilon(s) }\) is uniformly bounded away from~\({ 0 }\) along~\({ \mathfrak{C}^\epsilon }\) and~\({ 0 < \epsilon \ll 1 }\).
\end{corollary}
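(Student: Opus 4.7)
My plan is to apply \cref{thm:lower-bound-speed} (in its natural adaptation to~$n^\epsilon$) at each pair $(\varphi^\epsilon(s), c^\epsilon(s))$ on the global curve~$\mathfrak{C}^\epsilon$, after verifying its three hypotheses. First, \cref{thm:cone} gives $\varphi^\epsilon(s) \in \mathcal{K} \setminus \Set{0}$ for every $s > 0$, so $\varphi^\epsilon(s)$ is nontrivial, even, and increasing on~$(-\uppi, 0)$. Second, membership in $\mathcal{S}^\epsilon \subset \mathcal{U}^\epsilon$ yields the pointwise strict inequality $(n^\epsilon)'(\varphi^\epsilon(s)) < c^\epsilon(s)$, a fortiori the non-strict one required. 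Third, in case~\eqref{eq:n-sgn}, the antisymmetry built into the definition of~$\mathcal{X}^\beta$ forces $\varphi^\epsilon(s)(-\uppi/2) = -\varphi^\epsilon(s)(-\uppi/2)$, hence $\varphi^\epsilon(s)(-\uppi/2) = 0$, supplying the supplementary hypothesis needed in that case. \Cref{thm:lower-bound-speed} then delivers $c^\epsilon(s) \gtrsim 1$ for each fixed~$\epsilon$.

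The only point calling for care, and the main obstacle in my plan, is that the lower bound must be uniform in~$\epsilon$ for $0 < \epsilon \ll 1$. To handle this, I would inspect the proof of \cref{thm:lower-bound-speed} and observe that the positive constant driving the estimate is~$M_\alpha$, which depends solely on the kernel~$K_\alpha$ via \cref{thm:K-explicit} and is therefore $\epsilon$-independent. The auxiliary $\textnormal{L}^\infty$-bound on $\varphi^\epsilon(s)$ from \cref{thm:bound-wavespeed} applied with~$n^\epsilon$ in place of~$n$ likewise carries over with constants that do not degenerate as $\epsilon \searrow 0$, because $n^\epsilon \to n$ uniformly on compact sets and $(n^\epsilon)'(0) = 0$ identically in~$\epsilon$, as is transparent from~\eqref{eq:n-reg}. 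Assembling these ingredients yields $c^\epsilon(s) \gtrsim 1$ uniformly in $s \in \R_+$ and in all sufficiently small~$\epsilon > 0$, in line with the \enquote{immediately imply} phrasing that precedes the statement.
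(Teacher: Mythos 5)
Your proposal is correct and takes essentially the same route as the paper, which proves the corollary in a single sentence by invoking \cref{thm:lower-bound-speed} (adapted to~$n^\epsilon$) together with \cref{thm:cone}. You fill in the hypothesis-checking that the paper leaves implicit — nontriviality and monotonicity from the cone property, the strict inequality from membership in~$\mathcal{S}^\epsilon \subset \mathcal{U}^\epsilon$, and the vanishing at~$-\uppi/2$ from the antisymmetry baked into the redefined~$\mathcal{X}^\beta$ in case~\eqref{eq:n-sgn} — and you make explicit the $\epsilon$-uniformity, which the paper also leaves unstated. Your observation that the decisive constant~$M_\alpha$ is purely a kernel quantity and hence $\epsilon$-independent, and that~$(n^\epsilon)'(0) = 0$ uniformly, is exactly the right justification; the appeal to \cref{thm:bound-wavespeed} is harmless but not actually used inside the proof of \cref{thm:lower-bound-speed} and could be dropped.
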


In the remainder, we let \({ \Set {(\varphi_j^\epsilon, c_j^\epsilon)}_j^{} \coloneqq \Set{(\varphi^\epsilon(s_j^{}), c^\epsilon(s_j^{}))}_j^{} }\) denote a generic sequence along~\({ \mathfrak{C}^\epsilon }\) with \({ s_j \to \infty }\) as~\({ j \to \infty }\).

\begin{proposition} \label{thm:convergence}
Any sequence \({ \Set {(\varphi_{ j }^{ \epsilon }, c_{ j }^{ \epsilon })}_j^{} }\) with \({ \Set{c_j^\epsilon}_j^{} }\) bounded has a subsequence converging to a solution of~\eqref{eq:steady-reg} in~\({ \mathcal{X}^0 \times \R_+ }\).
\end{proposition}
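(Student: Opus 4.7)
The plan is to combine the a~priori estimates of \cref{sec:regularity} with compactness coming from monotonicity, since the strong $\mathcal{X}^\beta$-compactness of \cref{thm:compactness-solution-set} is unavailable when the limit might touch~$\partial \mathcal{U}^\epsilon$. First, boundedness of~$\{c_j^\epsilon\}_j$ together with \cref{thm:lower-bound-speed-bifurcation} produces a subsequence (not relabelled) with $c_j^\epsilon \to c_\infty^\epsilon \in (0, \infty)$. Because $\mathfrak{C}^\epsilon \subset \mathcal{S}^\epsilon \subset \mathcal{U}^\epsilon$, we have $(n^\epsilon)'(\varphi_j^\epsilon) < c_j^\epsilon$ everywhere, so the regularised analogue of \cref{thm:bound-wavespeed} yields the uniform bound $\norm{\varphi_j^\epsilon}_\infty \lesssim (1 + c_j^\epsilon)^{1/(p-1)} \lesssim 1$.

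Next, by \cref{thm:cone} each $\varphi_j^\epsilon$ lies in the cone~$\mathcal{K}$ and is therefore increasing on $(-\uppi, 0)$ and, by evenness, decreasing on $(0, \uppi)$. Applying Helly's selection theorem on each of these two half-intervals (with a diagonal extraction) produces a further subsequence converging pointwise on~$\T$ to a monotone function $\varphi_\infty^\epsilon$; evenness, and in case~\eqref{eq:n-sgn} also antisymmetry about~$-\tfrac{\uppi}{2}$, are preserved under pointwise limits. The uniform $\textnormal{L}^\infty$-bound and dominated convergence then give $\varphi_j^\epsilon \to \varphi_\infty^\epsilon$ in every $\textnormal{L}^p(\T)$, hence $n^\epsilon(\varphi_j^\epsilon) \to n^\epsilon(\varphi_\infty^\epsilon)$ in $\textnormal{L}^p$ together with $\fint_\T n^\epsilon(\varphi_j^\epsilon) \to \fint_\T n^\epsilon(\varphi_\infty^\epsilon)$. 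For the nonlocal term, dominated convergence against the integrable kernel~$K_\alpha$ (\cref{thm:periodic-kernel}) yields pointwise convergence $\abs{\textnormal{D}}^{-\alpha}\varphi_j^\epsilon \to \abs{\textnormal{D}}^{-\alpha}\varphi_\infty^\epsilon$, which together with the uniform $\textnormal{C}^\alpha$-bound on $\{\abs{\textnormal{D}}^{-\alpha}\varphi_j^\epsilon\}_j$ (coming from the $\alpha$-smoothing property of $\abs{\textnormal{D}}^{-\alpha}$ on $\textnormal{L}^\infty \cap \mathring{\textnormal{C}}(\T)$) and the Arzelà--Ascoli theorem promotes this to uniform convergence on~$\T$.

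Passing to the limit in the rearranged identity
$$c_j^\epsilon \varphi_j^\epsilon = n^\epsilon(\varphi_j^\epsilon) + \abs{\textnormal{D}}^{-\alpha}\varphi_j^\epsilon - \textstyle\fint_\T n^\epsilon(\varphi_j^\epsilon)$$
then shows that $(\varphi_\infty^\epsilon, c_\infty^\epsilon)$ satisfies~\eqref{eq:steady-reg} pointwise; since the right-hand side of this identity is at least continuous and $c_\infty^\epsilon > 0$, it follows that $\varphi_\infty^\epsilon$ is continuous with zero mean, that is, $\varphi_\infty^\epsilon \in \mathcal{X}^0$. Finally, the uniform convergence $\varphi_j^\epsilon \to \varphi_\infty^\epsilon$ in $\mathcal{X}^0$ follows from the standard fact that a pointwise-convergent sequence of monotone functions on a compact interval whose limit is continuous necessarily converges uniformly. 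The principal delicate point of the argument is extracting a continuous limit without any a~priori compactness in $\mathcal{X}^\beta$---this is precisely what the weaker $\mathcal{X}^0$-conclusion of the proposition is designed to accommodate, the continuity being read off at the end from the limiting equation itself.
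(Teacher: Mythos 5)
Your argument takes a genuinely different route from the paper. The paper establishes equicontinuity of $\{\varphi_j^\epsilon\}_j$ on all of $\T$ directly: it first shows $\{\abs{\textnormal{D}}^{-\alpha}\varphi_j^\epsilon\}_j$ is equicontinuous (via uniform continuity of translation in $\textnormal{L}^1(\T)$), then transfers this to $\{\varphi_j^\epsilon\}_j$ away from the degenerate point using the mean value theorem for $N^\epsilon$, and finally patches across the crest with the quadratic estimate \eqref{eq:L-difference-square}; a single application of Arzelà--Ascoli then concludes. You instead extract pointwise compactness from monotonicity (Helly), handle the nonlocal and nonlinear terms separately by dominated convergence, recover continuity of the limit from the limiting equation, and upgrade to uniform convergence by the standard Pólya-type result for monotone sequences with continuous limit. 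Your route is softer and more transparent about the structural role of monotonicity, whereas the paper's is quantitative and leans on the a~priori estimate \eqref{eq:L-difference-square} precisely where the problem degenerates; both ultimately exploit the same structural fact, namely the monotone invertibility of $N^\epsilon$ on the admissible range.

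However, there is a gap where you conclude that $\varphi_\infty^\epsilon$ is continuous. You assert that ``the right-hand side of this identity is at least continuous,'' but in the limit the right-hand side contains $n^\epsilon(\varphi_\infty^\epsilon)$, which is only a pointwise limit of continuous functions and is a~priori no more regular than $\varphi_\infty^\epsilon$ itself---so the reasoning is circular as written. The fix is short and is exactly what the monotone structure demands: rearrange instead to
\begin{equation*}
N^\epsilon(\varphi_\infty^\epsilon(\cdot); c_\infty^\epsilon) = \abs{\textnormal{D}}^{-\alpha}\varphi_\infty^\epsilon - \textstyle\fint_\T n^\epsilon(\varphi_\infty^\epsilon),
\end{equation*}
whose right-hand side is genuinely continuous (indeed $\textnormal{C}^\alpha$, by your own smoothing observation). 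Since each $\varphi_j^\epsilon$ satisfies $(n^\epsilon)'(\varphi_j^\epsilon) < c_j^\epsilon$, the pointwise limit obeys $\varphi_\infty^\epsilon \leq \mu_\infty^\epsilon$ (with $\abs{\varphi_\infty^\epsilon} \leq \mu_\infty^\epsilon$ in case \eqref{eq:n-sgn}), so $\varphi_\infty^\epsilon$ takes values in a compact interval on which $N^\epsilon(\cdot; c_\infty^\epsilon)$ is strictly increasing and hence a homeomorphism onto its image. It follows that $\varphi_\infty^\epsilon$ is continuous, and your Pólya step then closes the argument.
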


\begin{proof}
Note from \cref{thm:bound-wavespeed} (adapted to~\eqref{eq:steady-reg} with~\({ n^\epsilon }\)) that \({ \Set{ \varphi_j^\epsilon }_j^{} }\) is bounded in~\({ \mathcal{X}^0 }\). Since \({ K_\alpha }\) is integrable and translation in \({ \textnormal{L}^1(\T) }\) is uniformly continuous, it follows from
\begin{equation*}
\abs[\big]{ \abs{ \textnormal{D} }^{- \alpha} \varphi_j^\epsilon(x) - \abs{ \textnormal{D} }^{- \alpha} \varphi_j^\epsilon(y) } \leq \norm{ K_\alpha(x - \cdot) - K_\alpha(y - \cdot) }_{ \textnormal{L}^1(\T) } \sup\nolimits_{ j }^{} \norm{ \varphi_j^\epsilon }_{ \infty }
\end{equation*}
that \({ \Set{ \abs{ \textnormal{D} }^{- \alpha} \varphi_j^\epsilon }_j^{} }\) is (uniformly) equicontinuous on~\({ \T }\). Moreover,
\begin{align*}
\abs{ \textnormal{D} }^{- \alpha} \varphi_j^\epsilon(x) - \abs{ \textnormal{D} }^{- \alpha} \varphi_j^\epsilon(y) &= N^\epsilon(\varphi_j^\epsilon(x)) - N^\epsilon(\varphi_j^\epsilon(y)) \\[1ex]
&= \bigl( \varphi_j^\epsilon(x) - \varphi_j^\epsilon(y) \bigr) (N^\epsilon)'(\varphi_j^\epsilon(\xi_j^{}))
\end{align*}
for some \({ \xi_j }\) between \({ x, y \in \T }\), which since~\({ (N^\epsilon)'(\varphi_j^\epsilon(\xi_j^{})) > 0 }\), implies equicontinuity of \({ \Set{ \varphi_j^\epsilon }_j^{} }\) strictly away from~\({ 0 }\) (and~\({ - \uppi }\) in case~\eqref{eq:n-sgn}). Patched together with~\eqref{eq:L-difference-square} around~\({ 0 }\), we infer that \({ \Set{ \varphi_j^\epsilon }_j^{} }\) is equicontinuous on all of~\({ \T }\). Thus a subsequence converges in~\({ \mathcal{X}^0 }\) by the Arzelà--Ascoli theorem. Continuity of \({ \abs{ \textnormal{D} }^{- \alpha} }\) and \({ n^\epsilon }\) on \({ \mathcal{X}^0 }\) together with the existence of a convergent subsequence of~\({ \Set{ c_j^\epsilon }_j^{} }\) (by the Bolzano--Weierstrass theorem), then show that a subsequence of \({ \Set{ (\varphi_j^\epsilon, c_j^\epsilon) }_j^{} }\) converges to a solution of~\eqref{eq:steady-reg} in~\({ \mathcal{X}^0 \times \R_+ }\).
\end{proof}

\begin{proposition} \label{thm:simultaneous}
Possibilities~\ref{thm:global-bifurcation-blowup} and~\ref{thm:global-bifurcation-boundary} in \cref{thm:global-bifurcation} occur simultaneously.
\end{proposition}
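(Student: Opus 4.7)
The plan is to establish the equivalence \ref{thm:global-bifurcation-blowup}$\,\Leftrightarrow\,$\ref{thm:global-bifurcation-boundary}; since \cref{thm:cone} rules out \ref{thm:global-bifurcation-periodic} and the Buffoni--Toland trichotomy guarantees that at least one alternative holds, this equivalence forces both to occur. A first observation from \cref{thm:bound-wavespeed,thm:lower-bound-speed-bifurcation} is that along the entire curve $c^\epsilon(s)$ is confined to a bounded subinterval of $\R_+$ bounded away from $0$; consequently $\norm{ \varphi^\epsilon(s) }_{\infty}$ is uniformly controlled, and any blow-up in $\norm{ \mathfrak{C}^\epsilon(s) }_{ \mathcal{X}^{\beta} \times \R_+ }$ must come exclusively from the $\beta$-Hölder seminorm of $\varphi^\epsilon(s)$.

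For \ref{thm:global-bifurcation-blowup}$\,\Rightarrow\,$\ref{thm:global-bifurcation-boundary}, argue by contradiction: suppose $\distance{\mathfrak{C}^\epsilon}{\partial \mathcal{U}^\epsilon} \geq \delta_0 > 0$. Because $\mathcal{U}^\epsilon$ is cut out by the pointwise inequality $(n^\epsilon)'(\varphi) < c$, while $(n^\epsilon)'$ is Lipschitz on the bounded range of $\varphi^\epsilon(s)$ and $\mathcal{X}^\beta \hookrightarrow \mathring{\textnormal{C}}(\T)$, this distance bound translates into a uniform pointwise gap $c^\epsilon(s) - (n^\epsilon)'(\varphi^\epsilon(s)(x)) \geq \delta > 0$. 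Inserting $\varphi^\epsilon(s)$ into the fixed-point identity $\varphi = (N^\epsilon)^{-1}\bigl( \abs{ \textnormal{D} }^{ - \alpha } \varphi - \textstyle\fint_\T n^\epsilon(\varphi) \bigr)$ and iterating the local $\alpha$-smoothing of $\abs{ \textnormal{D} }^{- \alpha}$ (\cref{thm:local-smoothing})---exactly as in the proof of \cref{thm:smoothness}---yields a uniform bound on $\norm{ \varphi^\epsilon(s) }_{ \textnormal{C}^m(\T) }$ for every $m \geq 1$. In particular $\norm{ \varphi^\epsilon(s) }_{ \mathcal{X}^{\beta} }$ stays bounded, contradicting~\ref{thm:global-bifurcation-blowup}.

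For the converse \ref{thm:global-bifurcation-boundary}$\,\Rightarrow\,$\ref{thm:global-bifurcation-blowup}, suppose $\norm{ \mathfrak{C}^\epsilon(s) }$ is uniformly bounded. By \cref{thm:compactness-solution-set}, $\mathfrak{C}^\epsilon(\R_+)$ is relatively compact in $\mathcal{X}^\beta \times \R_+$. Continuity of $s \mapsto \distance{\mathfrak{C}^\epsilon(s)}{\partial \mathcal{U}^\epsilon}$ together with $\mathfrak{C}^\epsilon(s) \in \mathcal{U}^\epsilon$ at every finite $s$ forces any sequence $s_j$ witnessing $\distance{\mathfrak{C}^\epsilon(s_j)}{\partial \mathcal{U}^\epsilon} \to 0$ to satisfy $s_j \to \infty$. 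Extract a convergent subsequence $(\varphi_j^\epsilon, c_j^\epsilon) \to (\varphi^*, c^*)$ in $\mathcal{X}^\beta \times \R_+$; continuity of $F^\epsilon$ gives $F^\epsilon(\varphi^*, c^*) = 0$, and continuity of the distance function places $(\varphi^*, c^*) \in \partial \mathcal{U}^\epsilon$, that is, $\max_x (n^\epsilon)'(\varphi^*(x)) = c^*$. Since the cone $\mathcal{K}$ is closed in $\mathcal{X}^\beta$ and $\mathfrak{C}^\epsilon \subset \mathcal{K} \setminus \Set{ 0 }$ by \cref{thm:cone}, $\varphi^*$ is even, has zero mean, and is increasing on $(-\uppi, 0)$. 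It is also nontrivial: if $\varphi^* \equiv 0$, then $\max (n^\epsilon)'(\varphi^*) = 0 \neq c^* > 0$, contradicting the boundary condition. Evenness together with monotonicity places the maximum at $0$, and $\max (n^\epsilon)'(\varphi^*) = c^*$ locates its value at the first positive critical point of $N^\epsilon$, namely $\varphi^*(0) = \mu^\epsilon$. All hypotheses of \cref{thm:regularity} (adapted to the regularised nonlinearity) are then satisfied, yielding $\mu^\epsilon - \varphi^*(x) \gtrsim \abs{ x }^\alpha$ uniformly near $0$. Since $\beta > \alpha$, this forces $\varphi^* \notin \textnormal{C}^\beta(\T)$, contradicting $\varphi^* \in \mathcal{X}^\beta$.

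The main obstacle lies in the second implication: transporting the structural a~priori features (evenness, strict monotonicity on $(-\uppi, 0)$, nontriviality, and attainment of the extreme value $\mu^\epsilon$ at $x = 0$) from the approximating sequence to the limit, and identifying the limit as an honest solution sitting on $\partial \mathcal{U}^\epsilon$. Once this qualitative characterisation of $(\varphi^*, c^*)$ is secured, the sharp $\alpha$-Hölder asymptotics granted by \cref{thm:regularity} clashes with the ambient $\beta$-Hölder topology, closing the argument. The first implication, by contrast, is a routine bootstrap already prepared by \cref{thm:smoothness} and \cref{thm:local-smoothing}.
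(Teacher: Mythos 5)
Your proof is correct and follows essentially the same route as the paper: it establishes the equivalence of possibilities~\ref{thm:global-bifurcation-blowup} and~\ref{thm:global-bifurcation-boundary}, using the bootstrap behind \cref{thm:smoothness}/\cref{thm:local-smoothing} for one implication, and for the other extracting a limit from the curve and clashing it with the one-sided \({ \alpha }\)-H\"older lower bound near the crest (the paper cites only \cref{thm:lower-bound-zero} for this, whereas you invoke the full \cref{thm:regularity}, but only the lower bound is used).

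The one place your argument is exposed is the claim that \({ \mathfrak{C}^\epsilon(\R_+) }\) is relatively compact in \({ \mathcal{X}^\beta \times \R_+ }\) via \cref{thm:compactness-solution-set}. That lemma concerns bounded \emph{closed} subsets of \({ \mathcal{S}^\epsilon }\); here the closure of \({ \mathfrak{C}^\epsilon(\R_+) }\) is expected to leave \({ \mathcal{S}^\epsilon }\), which is precisely where the bootstrap through the operator \({ G }\) in~\eqref{eq:composition-operator} degenerates, since \({ (N^\epsilon)^{-1} }\) loses its uniform derivative bounds as the pointwise gap \({ c - (n^\epsilon)'(\varphi) }\) closes. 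The paper sidesteps this by extracting the limit only in \({ \mathcal{X}^{\beta'} }\) for some \({ \alpha < \beta' < \beta }\), using the genuine compactness of the embedding \({ \mathcal{X}^{\beta} \hookrightarrow\hookrightarrow \mathcal{X}^{\beta'} }\), which suffices because \({ \beta' > \alpha }\) already contradicts the \({ \abs{ x }^{\alpha} }\) lower bound. Your version is easily repaired along these lines (and the conclusion \({ \varphi^* \in \mathcal{X}^\beta }\) that you want can be recovered anyway by lower semicontinuity of the H\"older seminorm under uniform convergence), so this is a small citation mismatch rather than a genuine gap. Also, a minor remark on the first implication: the translation from the distance bound \({ \distance{\mathfrak{C}^\epsilon}{\partial\mathcal{U}^\epsilon} \geq \delta_0 }\) to a uniform pointwise gap needs neither the Lipschitz bound on \({ (n^\epsilon)' }\) nor the embedding you invoke --- it follows immediately because \({ (\varphi, c - \eta) }\) with \({ \eta = c - \max_x (n^\epsilon)'(\varphi(x)) }\) already lies on \({ \partial\mathcal{U}^\epsilon }\), so \({ \eta \geq \delta_0 }\).
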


\begin{proof}
In light of \cref{thm:cone}, we know that either possibility~\ref{thm:global-bifurcation-blowup} or possibility~\ref{thm:global-bifurcation-boundary} takes place, and that \({ \varphi^\epsilon(s) }\) is nontrivial and increasing on~\({ (- \uppi, 0) }\) for~\({ s > 0 }\) by \cref{thm:cone}. If possibility~\ref{thm:global-bifurcation-blowup} occurs, then either \({ \norm{ \varphi^\epsilon(s) }_{ \mathcal{X}^\beta } \to \infty }\) or \({ c^\epsilon(s) \to \infty }\) as~\({ s \to \infty }\). Since the wave speed cannot blow up due to \cref{thm:bound-wavespeed} (adapted to~\eqref{eq:steady-reg}) and \({ \varphi^\epsilon(s) }\) being nontrivial, it must be that \({ \norm{ \varphi^\epsilon(s) }_{ \mathcal{X}^\beta }  }\) explodes. But then
\begin{equation*}
c^\epsilon(s) -  (n^\epsilon)'(\varphi^\epsilon(s)(x)) \xrightarrow[ s \to \infty ]{} 0
\end{equation*}
at \({ x = 0 }\) (and at \({ x = - \uppi }\) in case~\eqref{eq:n-sgn}) by \cref{thm:smoothness} adapted to~\eqref{eq:steady-reg}, demonstrating that possibility~\ref{thm:global-bifurcation-boundary} holds.

Conversely, suppose that possibility~\ref{thm:global-bifurcation-boundary} but not possibility~\ref{thm:global-bifurcation-blowup} occurs. Then there exists a sequence \({ \Set{ (\varphi_j^\epsilon, c_j^\epsilon) }_j^{} }\) along~\({ \mathfrak{C}^\epsilon }\), with \({ \varphi_j^\epsilon }\) increasing on~\({ (- \uppi, 0) }\), satisfying \({ (n^\epsilon)'(\varphi_j^\epsilon) < c_j^\epsilon }\) everywhere and\pagebreak 
\begin{equation*}
c_j^\epsilon - (n^\epsilon)'(\varphi_j^\epsilon(0)) \xrightarrow[ j \to \infty ]{} 0, \qquad \textnormal{equiv.\@ that} \qquad \mu_j^\epsilon - \varphi_j^\epsilon(0) \xrightarrow[ j \to \infty ]{} 0,
\end{equation*}
while \({ \Set{ \varphi_j^\epsilon }_j^{} }\) remains bounded in~\({ \mathcal{X}^\beta }\), where \({ \mu_j^\epsilon \coloneqq \mu(p, c_j^\epsilon, \epsilon) }\) as in~\eqref{eq:highest-value-reg}. By compactness we may extract a convergent subsequence in~\({ \mathcal{X}^{ \beta'} }\) for~\({ \beta' \in (\alpha, \beta) }\), which yields a contradiction to \cref{thm:lower-bound-zero} (adapted to~\eqref{eq:steady-reg}) with respect to the one-sided \({ \alpha }\)-Hölder rate at~\({ 0 }\). Hence, possibility~\ref{thm:global-bifurcation-blowup} is true.
\end{proof}

In order to conclude the proof of \cref{thm:existence}, let \({ \Set{ (\varphi_j^\epsilon, c_j^\epsilon) }_j^{} }\) be any sequence along~\({ \mathfrak{C}^\epsilon }\) for fixed \({ 0 < \epsilon \ll 1 }\). By \cref{thm:bound-wavespeed} (adapted to~\eqref{eq:steady-reg}) we know that \({ \Set{ c_j^\epsilon }_j^{} }\) is bounded, and so \cref{thm:convergence} shows that \({ \Set{ (\varphi_j^\epsilon, c_j^\epsilon) }_j^{} }\) converges, up to a subsequence, to a solution~\({ (\varphi^\epsilon, c^\epsilon) \in \mathcal{X}^0 \times \R_+ }\) of~\eqref{eq:steady-reg} with \({ \varphi^\epsilon \neq 0 }\) increasing on~\({ (- \uppi, 0) }\) by \cref{thm:cone} and \({ c^\epsilon \neq 0 }\) due to \cref{thm:lower-bound-speed-bifurcation}. It is then clear from \cref{thm:simultaneous} that \({ (n^\epsilon)'(\varphi^\epsilon(0)) = c^\epsilon }\) or equivalently, that \({ \varphi^\epsilon(0) = \mu^\epsilon }\) by~\eqref{eq:highest-value-reg}.

Now let \({ \epsilon \searrow 0 }\). \Cref{thm:bound-wavespeed,thm:lower-bound-speed} (adapted to~\eqref{eq:steady-reg}) imply that \({ \Set{ c^\epsilon }_\epsilon }\) converges, up to a subsequence, to some~\({ c \neq 0 }\), from which we also find that \({ \Set{ \varphi^\epsilon }_\epsilon }\) is bounded in~\({ \mathcal{X}^0 }\). As in the proof of~\cref{thm:convergence}, there exists a uniformly convergent subsequence (not relabeled) with limit~\({ \varphi \in \mathcal{X}^0 }\) by the Arzelà--Ascoli theorem. Since also \({ n^\epsilon \to n }\) uniformly (locally in~\({ \R }\)) by its construction~\eqref{eq:n-reg}, we infer that
\begin{equation*}
n^\epsilon(\varphi^\epsilon) \to n(\varphi) \quad \text{in } \mathcal{X}^0.
\end{equation*}
Coupled with continuity of~\({ \abs{ \textnormal{D} }^{ - \alpha} }\) on~\({ \mathcal{X}^0 }\), it follows that \({ \Set{ (\varphi^\epsilon, c^\epsilon) }_\epsilon }\) converges, up to a subsequence, to a solution~\({ (\varphi, c) \in \mathcal{X}^0 \times \R_+ }\) of the original equation~\eqref{eq:steady}, with \({ n'(\varphi) \leq c }\) and \({ \varphi }\) being increasing on~\({ (- \uppi, 0) }\), and with \({ \varphi }\) also being antisymmetric about~\({ - \tfrac{ \uppi }{ 2 } }\) in case~\eqref{eq:n-sgn}. Observe finally that \({ \varphi }\) is nontrivial, because
\begin{equation*}
\varphi(0) = \lim_{ \epsilon \searrow 0 } \varphi^\epsilon(0) = \lim_{ \epsilon \searrow 0 } \mu^\epsilon = \mu \neq 0,
\end{equation*}
where \({ \mu }\) is as in~\eqref{eq:highest-value}. This then finishes the proof in light of \cref{thm:regularity}.

\section{Conclusion} 

In this paper, we have established the existence of large-amplitude periodic travelling-wave solutions with exact and optimal \({ \alpha }\)-Hölder regularity in a class of evolution equations with negative-order homogeneous dispersion of order~\({ - \alpha }\) for all~\({ \alpha \in (0, 1) }\). Techniques include elaborate local estimates for nonlocal operators and global bifurcation analysis. A main novelty is the inclusion of generally nonsmooth, power-type nonlinearities in the considered class of equations, which we analyse using a regularisation process. We also obtain that antisymmetric nonlinearities lead to the first existence result of \enquote{doubly-cusped} extreme waves with antisymmetry.

These results open up for new investigations. One may, for instance, consider inhomogeneous nonlinearities and also study associated symmetry principles for the existence of large-amplitude waves. Another line of research may seek to establish the convexity of the highest waves and its connection to the order of the dispersive operator and the growth and regularity of the nonlinearity.

\section{Acknowledgements} 

The authors gratefully acknowledge the in-depth feedback from the anonymous referee. Both authors were partially supported by research grant no.\@ 250070 from The Research Council of Norway.

\printbibliography
\end{document}